\documentclass[11pt]{article}
\pdfoutput=1

\usepackage[T1]{fontenc}
\usepackage[utf8]{inputenc}
\usepackage{lmodern}
\usepackage{amsmath,amssymb,amsthm,mathtools}
\usepackage{microtype}
\usepackage[a4paper,margin=30mm]{geometry}
\usepackage{float}
\usepackage{hyperref}
\hypersetup{hidelinks}

\newtheorem{theorem}{Theorem}[section]
\newtheorem{proposition}[theorem]{Proposition}
\newtheorem{lemma}[theorem]{Lemma}
\newtheorem{corollary}[theorem]{Corollary}
\theoremstyle{definition}
\newtheorem{definition}[theorem]{Definition}
\newtheorem{problem}[theorem]{Problem}
\theoremstyle{remark}

\newcommand{\R}{\mathbb{R}}
\newcommand{\dd}{\,\mathrm{d}}
\newcommand{\Aff}{\operatorname{Aff}}
\newcommand{\Span}{\operatorname{span}}
\newcommand{\qref}[1]{\eqref{#1}}

\title{\bfseries Continuous Families of Central Configurations\\
for Even-Power Homogeneous Potentials}

\author{Pawe\l{} Nurowski\\[2mm]
\small Center for Theoretical Physics, Polish Academy of Sciences\\
\small al. Lotnik\'ow 32/46, 02-668 Warsaw, Poland\\
\small \texttt{nurowski@cft.edu.pl}}

\date{}
\begin{document}
\begingroup
\centering
{\Large\bfseries Continuous Families of Central Configurations\\
for Even-Power Homogeneous Potentials\par}
\vspace{5pt}
{\normalsize Pawe\l{} Nurowski\par}
{\small Center for Theoretical Physics, Polish Academy of Sciences, Warsaw, Poland\par}
\vspace{7pt}
\endgroup

\begin{abstract}
  \small
We study central configurations for $n$ bodies interacting through pair
potentials
\[
        U_\sigma
        =
        \frac{\kappa}{\sigma}
        \sum_{i<j}m_i m_j r_{ij}^{\sigma},
\]
with logarithmic limit at $\sigma=0$.  For the Newtonian exponent
$\sigma=-1$, the classical Chazy--Wintner--Smale finiteness problem asks
whether, for prescribed positive masses, there are only finitely many
central configurations up to similarity.  Finiteness is known in several
important special cases and under additional dimensional or genericity
hypotheses.  It is therefore natural to ask whether an analogous finiteness
principle continues to hold for other homogeneous exponents.

We show that for every positive even exponent
\[
        \sigma=2k,
        \qquad k\ge2,
\]
the answer is emphatically negative.  In fact, these exponents admit an
abundance of continuous families of pairwise non-similar central
configurations.  The mechanism is provided by spherical designs: on a common
sphere the force kernel for $\sigma=2k$ restricts to a polynomial of degree
at most $k$, and consequently every positive weighted spherical $k$-design
is a central configuration.  We compute the corresponding central
multiplier explicitly.
\end{abstract}

\vspace{-0.5em}

\section{Introduction}\label{sec:introduction}
The Introduction is written so as to make the paper largely self-contained.
Subsections~\ref{subsec:basic-notions}--\ref{subsec:homogeneous-potentials}
recall the basic terminology, place the problem in the context of the
classical Newtonian finiteness question, and formulate its homogeneous
analogue.  Readers already familiar with this background may proceed
directly to Subsections~\ref{subsec:first-quartic-family}
and~\ref{subsec:second-sextic-family}, where the phenomenon studied in the
paper first appears in two explicit examples, and then to
Subsection~\ref{subsec:general-mechanism-main-results}, where the general
spherical-design mechanism and the main results are stated.

\subsection{Basic notions recalled}\label{subsec:basic-notions}

We begin by recalling the elementary terminology from the $n$-body problem
that will be used throughout the paper.  Let
\[
        m_1,\ldots,m_n>0
\]
be fixed masses, and let
\[
        q=(q_1,\ldots,q_n),
        \qquad q_i\in\R^D,
\]
be a configuration of $n$ pairwise distinct points.  We write
\[
        r_{ij}=|q_i-q_j|,
        \qquad
        M=\sum_{i=1}^n m_i,
\]
and denote by
\begin{equation}\label{eq:center-of-mass-intro}
        c=\frac{1}{M}\sum_{i=1}^n m_iq_i
\end{equation}
the center of mass of the configuration.

For the Newtonian problem, corresponding to the homogeneous exponent
$\sigma=-1$, the potential is
\begin{equation}\label{eq:newton-potential}
        U_{-1}(q_1,\ldots,q_n)
        =-\kappa\sum_{1\le i<j\le n}\frac{m_i m_j}{r_{ij}},
        \qquad \kappa>0.
\end{equation}
If $x_i=x_i(t)$ are the trajectories of the bodies, Newton's equations are
\begin{equation}\label{eq:newton-equations-intro}
        \ddot x_i
        =\kappa\sum_{j\ne i}
        m_j\frac{x_j-x_i}{|x_j-x_i|^3},
        \qquad i=1,\ldots,n.
\end{equation}
For a fixed configuration $q$ we denote the corresponding Newtonian
acceleration field by
\begin{equation}\label{eq:newton-field-intro}
        F_i^{(-1)}(q)
        =\sum_{j\ne i}
        m_j\frac{q_j-q_i}{r_{ij}^3}.
\end{equation}

Recall that $q$ is called a \emph{central configuration} if there exists a
real number $\lambda$, the same for all bodies, such that
\begin{equation}\label{eq:newton-central-intro}
        F_i^{(-1)}(q)
        =\lambda(q_i-c),
        \qquad i=1,\ldots,n.
\end{equation}
Thus the acceleration field at every body is directed along the line through
that body and the center of mass, and its magnitude is proportional to the
distance from the center of mass, with one common proportionality factor for
the whole configuration.  In center-of-mass coordinates, where $c=0$,
\qref{eq:newton-central-intro} becomes simply
\[
        F_i^{(-1)}(q)=\lambda q_i.
\]
Central configurations play a distinguished dynamical role: they are the
fixed shapes underlying homothetic motions and, in suitable even-dimensional
situations, rotating homographic motions.  The precise separation between
the time-dependent motion and the time-independent central-configuration
equations will be derived in Section~2.

We also recall the elementary affine-geometric notion that will be used to
describe the dimension of a configuration.  The affine hull of the points
$q_1,\ldots,q_n$ is
\begin{equation}\label{eq:affine-hull-definition}
        \Aff\{q_1,\ldots,q_n\}
        =
        q_1+
        \Span\{q_2-q_1,\ldots,q_n-q_1\}.
\end{equation}
Thus the affine hull is the smallest affine subspace containing all the
points of the configuration.  Since the linear space on the right-hand side
of \qref{eq:affine-hull-definition} is generated by only $n-1$ vectors, its
dimension cannot exceed $n-1$.

The \emph{dimension of the configuration}
$q=(q_1,\ldots,q_n)$ is, by definition,
\begin{equation}\label{eq:configuration-dimension}
        \dim q
        :=
        \dim\Aff\{q_1,\ldots,q_n\}.
\end{equation}
In particular,
\begin{equation}\label{eq:configuration-dimension-bound}
        \dim q\le \min\{D,n-1\}.
\end{equation}
The center of mass $c$ belongs to the affine hull, since it is an affine
combination of the points $q_i$.  After translating $c$ to the origin, the
affine hull therefore becomes the linear space
\begin{equation}\label{eq:centered-configuration-span}
        \Aff\{q_1,\ldots,q_n\}-c
        =
        \Span\{q_1-c,\ldots,q_n-c\},
\end{equation}
and hence
\begin{equation}\label{eq:centered-configuration-dimension}
        \dim q
        =
        \dim\Span\{q_1-c,\ldots,q_n-c\}.
\end{equation}

Thus a collinear configuration has dimension $1$, a non-collinear planar
configuration has dimension $2$, and a non-planar configuration in
three-dimensional space has dimension $3$.  Whenever we refer to a
$d$-dimensional configuration, the symbol $d$ means $\dim q$; the ambient
Euclidean space has dimension $D$.

Finally, the finiteness questions considered in this paper concern shapes,
not particular placements of the same shape in Euclidean space.  For fixed
labelled masses we call two configurations similar if
\begin{equation}\label{eq:similarity-equivalence}
        q_i'=a+sQq_i,
        \qquad i=1,\ldots,n,
\end{equation}
for some translation $a\in\R^D$, some scale $s>0$, and some orthogonal
transformation $Q\in O(D)$.  When some masses are equal we shall also allow
the corresponding finite permutations when discussing unlabelled geometric
families.  Such finite identifications do not affect the local dimensions of
the moduli spaces considered below.

\subsection{The classical finiteness problem}\label{subsec:classical-finiteness}

With this terminology in place, the classical Newtonian finiteness problem
can be stated as follows: for a prescribed vector of positive masses
\[
        (m_1,\ldots,m_n),
\]
are there only finitely many similarity classes of Newtonian central
configurations?

The question is traditionally traced back to Chazy's 1918 study of the
$n$-body problem \cite{Chazy1918}, appears explicitly in Wintner's monograph
\cite{Wintner1941}, and was later singled out by Smale as Problem~6 in his
list of mathematical problems for the twenty-first century
\cite{Smale1998}.  In its familiar planar form it is commonly referred to
as the Chazy--Wintner--Smale finiteness problem.  Important later results
include Moeckel's \emph{generic} finiteness theorem for configurations of
affine dimension $n-2$, usually called Dziobek configurations,
\cite{Moeckel2001}, the proof by Hampton and Moeckel of finiteness for the
Newtonian four-body problem \cite{HamptonMoeckel2006}, and the result of
Albouy and Kaloshin for five planar bodies, which proves finiteness outside
a specified codimension-two exceptional set in the positive mass space
\cite{AlbouyKaloshin2012}.  For larger numbers of bodies the general
Newtonian finiteness problem remains open.

In Moeckel's result, the qualifier \emph{generic} is essential.  His theorem
proves finiteness for mass vectors belonging to a generic subset of the
positive mass space; it does not assert finiteness for every prescribed
positive mass vector.  Mass vectors outside that generic subset are therefore
not covered by the theorem.  This is logically different from the full
classical finiteness problem stated above, which asks for finiteness for each
fixed choice of positive masses.

\subsection{Homogeneous potentials}\label{subsec:homogeneous-potentials}

The Newtonian potential belongs to the one-parameter family of homogeneous
pair potentials
\begin{equation}\label{eq:homogeneous-potential}
        U_{\sigma}(q_1,\ldots,q_n)
        =\frac{\kappa}{\sigma}
        \sum_{1\le i<j\le n}m_i m_j r_{ij}^{\sigma},
        \qquad \sigma\ne0.
\end{equation}
After subtraction of an irrelevant $\sigma$-dependent additive constant, the
limit $\sigma\to0$ is the logarithmic potential
\begin{equation}\label{eq:log-potential}
        U_0(q_1,\ldots,q_n)
        =\kappa\sum_{1\le i<j\le n}m_i m_j\log r_{ij}.
\end{equation}
The acceleration field associated with \qref{eq:homogeneous-potential} is
\begin{equation}\label{eq:homogeneous-field-intro}
        F_i^{(\sigma)}(q)
        =\sum_{j\ne i}
        m_j(q_j-q_i)r_{ij}^{\sigma-2},
\end{equation}
and the equations of motion are
\begin{equation}\label{eq:homogeneous-equations-intro}
        \ddot x_i=\kappa F_i^{(\sigma)}(x).
\end{equation}
Accordingly, a configuration $q$ is central for the exponent $\sigma$ if
there exists a single real number $\lambda$ such that
\begin{equation}\label{eq:homogeneous-central-intro}
        F_i^{(\sigma)}(q)
        =\lambda(q_i-c),
        \qquad i=1,\ldots,n.
\end{equation}
The Newtonian problem is recovered at $\sigma=-1$.  Other distinguished
values are the logarithmic limit $\sigma=0$, the Hooke exponent $\sigma=2$,
and the positive even exponents
\[
        \sigma=4,6,8,\ldots .
\]

The Newtonian exponent $\sigma=-1$ is only one member of the family
\qref{eq:homogeneous-potential}.  It is therefore natural to ask whether the
finiteness phenomena discussed in the Newtonian regime persist when the
homogeneity exponent is changed.  The generic version of this question will
be our starting point.

\begin{problem}[Generic finiteness for homogeneous potentials]
\label{prob:generic-homogeneous-finiteness}
Fix the number of bodies $n$, an integer $d$ with $1\le d\le n-1$, and a
homogeneity exponent $\sigma$.  For a generic choice of positive masses
$(m_1,\ldots,m_n)$, are there only finitely many similarity classes of
central configurations $q$ satisfying
\[
        \dim q=d
\]
for the potential \qref{eq:homogeneous-potential}, with the logarithmic
interpretation \qref{eq:log-potential} when $\sigma=0$?
\end{problem}

For $\sigma=-1$, Problem~\ref{prob:generic-homogeneous-finiteness} reduces to
the generic Newtonian finiteness problem.  Our interest here is in
non-Newtonian exponents.  One value has to be separated immediately.  When
$\sigma=2$, equation \qref{eq:homogeneous-field-intro} gives
\begin{equation}\label{eq:hooke-field-intro}
\begin{aligned}
        F_i^{(2)}(q)
        &=\sum_{j\ne i}m_j(q_j-q_i) \\
        &=\sum_{j=1}^n m_jq_j-Mq_i \\
        &=M(c-q_i).
\end{aligned}
\end{equation}
Thus every configuration is central for $\sigma=2$, with $\lambda=-M$.
The Hooke exponent is therefore globally degenerate and must be excluded
from any nontrivial finiteness statement.

The exponents of primary interest in this paper are the positive even values
\begin{equation}\label{eq:positive-even-exponents}
        \sigma=2k,\qquad k\ge2.
\end{equation}
We do not restrict the dimension of the configurations to the plane.  The
structural results are formulated for arbitrary $d$.  Planar configurations
play a special role only because they provide the simplest explicit families
and admit a particularly transparent description in terms of regular
polygons and Fourier modes.

There is an important partial generic-finiteness result for homogeneous
potentials in the class of Dziobek configurations.  Dias
\cite{Dias2017} considers the family
\begin{equation}\label{eq:dias-potential}
        U_a
        =\frac{1}{2a+2}
        \sum_{i<j}m_i m_j r_{ij}^{2a+2},
        \qquad a\in\tfrac12\mathbb Z,\quad a\ne-1,
\end{equation}
with the logarithmic potential at $a=-1$.  In the notation of
\qref{eq:homogeneous-potential}, the two parameters are related by
\begin{equation}\label{eq:sigma-a-conversion}
        \sigma=2a+2.
\end{equation}
Hence the positive even exponents \qref{eq:positive-even-exponents}
correspond to
\[
        a=k-1\in\mathbb N
\]
and lie within the exponent range considered by Dias.

The dimensional hypothesis in Dias's theorem is, however, very specific:
it concerns configurations satisfying
\begin{equation}\label{eq:dziobek-dimension}
        \dim q=n-2.
\end{equation}
Thus his theorem addresses the single Dziobek slice $d=n-2$ of
Problem~\ref{prob:generic-homogeneous-finiteness}.  For example, in the
planar case $d=2$ this condition is satisfied only for $n=4$; planar
configurations with $n\ge5$ are outside its scope.  More generally, for a
fixed number of bodies $n$, the theorem does not address configuration
dimensions $d\ne n-2$.  The positive even exponents therefore provide a
natural setting in which to investigate the generic finiteness problem
beyond the Dziobek dimensional class.

\subsection{A first family at the quartic exponent}
\label{subsec:first-quartic-family}

We now give the simplest example illustrating what can happen at a positive
even exponent.  Identify the Euclidean plane with $\mathbb C$ and put
\begin{equation}\label{eq:omega-definition}
        \omega=e^{2\pi i/3}.
\end{equation}
Let
\begin{equation}\label{eq:triangle-definition}
        P_3=\{1,\omega,\omega^2\}
\end{equation}
be the equilateral triangle on the unit circle.  For a real parameter $\phi$
define the six-point configuration
\begin{equation}\label{eq:quartic-two-triangles}
        X_\phi=P_3\cup e^{i\phi}P_3.
\end{equation}
All six masses are taken to be equal to one.  The two triangles meet precisely
when
\begin{equation}\label{eq:quartic-collision-condition}
        \phi\equiv0\pmod{2\pi/3},
\end{equation}
so away from these values $X_\phi$ consists of six distinct points on the
unit circle.

Consider now the homogeneous potential with exponent $\sigma=4$.  In the
notation of \qref{eq:homogeneous-field-intro}, its force field is
\begin{equation}\label{eq:quartic-field}
        F_i^{(4)}(q)
        =\sum_{j\ne i}m_j(q_j-q_i)|q_j-q_i|^2.
\end{equation}
For the configuration \qref{eq:quartic-two-triangles}, a direct calculation
gives
\begin{equation}\label{eq:quartic-two-triangles-centrality}
        F_i^{(4)}(X_\phi)=-18q_i,
        \qquad i=1,\ldots,6,
\end{equation}
for every collision-free value of $\phi$.  Thus every such $X_\phi$ is a
central configuration for the quartic potential, with the same central
multiplier
\begin{equation}\label{eq:quartic-two-triangles-lambda}
        \lambda=-18.
\end{equation}
The remarkable point is that the relative angle $\phi$ remains free.

These configurations are strictly convex.  Indeed, all six points lie on one
circle and, away from the collision values
\qref{eq:quartic-collision-condition}, they are pairwise distinct; hence each
of them is a vertex of the convex hull.  The parameter $\phi$ is not merely
produced by rotating the whole configuration.  To see this, write the six
points as complex numbers $z_1,\ldots,z_6$ and consider
\begin{equation}\label{eq:quartic-shape-invariant}
        J_3(X_\phi)
        :=\frac{1}{6}\left|\sum_{j=1}^6 z_j^3\right|.
\end{equation}
For \qref{eq:quartic-two-triangles}, one obtains
\begin{equation}\label{eq:quartic-shape-invariant-value}
        J_3(X_\phi)
        =\left|\cos\frac{3\phi}{2}\right|.
\end{equation}
To use \qref{eq:quartic-shape-invariant} to distinguish similarity classes,
we first eliminate the translational and scaling parts of a possible
similarity.  Each of the two equilateral triangles in
\qref{eq:quartic-two-triangles} is centered at the origin, and therefore
\[
        \frac16\sum_{z\in X_\phi}z=0
\]
for every $\phi$.  Hence every member of the family has center of mass equal
to the origin.  If a similarity
\[
        z\longmapsto a+sQz,
        \qquad s>0,\quad Q\in O(2),
\]
maps one member of the family to another, then it maps the center of mass of
the first configuration to the center of mass of the second.  Since both
centers are zero, necessarily
\[
        a=0.
\]
All points of both configurations lie on the unit circle, so after
$a=0$ we also have
\[
        1=|sQz|=s
\]
for every vertex, and hence $s=1$.

Thus any similarity between two members of the normalized family reduces to
an orthogonal transformation, possibly followed by a permutation of the six
equal masses.  Under a rotation the sum
\[
        \sum z_j^3
\]
is multiplied by a complex number of modulus one. Under a reflection the
sum is complex conjugated up to a unit phase, so its absolute value is
unchanged. Consequently $J_3$ in \qref{eq:quartic-shape-invariant} is indeed an
invariant of the similarity class within this family.  On the interval
\begin{equation}\label{eq:quartic-fundamental-interval}
        0<\phi<\frac{\pi}{3},
\end{equation}
the function in \qref{eq:quartic-shape-invariant-value} is strictly
decreasing.  Consequently the family \qref{eq:quartic-two-triangles}
contains a continuum of pairwise non-similar, strictly convex central
configurations for one fixed positive mass vector and one fixed exponent
$\sigma=4$.

\subsection{A second family at the sextic exponent}
\label{subsec:second-sextic-family}

The preceding phenomenon is not peculiar to the quartic potential.  A second
example already appears at the next positive even exponent.  Put
\begin{equation}\label{eq:square-definition}
        P_4=\{1,i,-1,-i\}
\end{equation}
and, for a real parameter $\phi$, consider the eight-point configuration
\begin{equation}\label{eq:sextic-two-squares}
        Y_\phi=P_4\cup e^{i\phi}P_4.
\end{equation}
Again all masses are equal to one.  The two squares meet precisely when
\begin{equation}\label{eq:sextic-collision-condition}
        \phi\equiv0\pmod{\pi/2},
\end{equation}
so otherwise $Y_\phi$ consists of eight distinct points on the unit circle.

For the exponent $\sigma=6$, the force field
\qref{eq:homogeneous-field-intro} becomes
\begin{equation}\label{eq:sextic-field}
        F_i^{(6)}(q)
        =\sum_{j\ne i}m_j(q_j-q_i)|q_j-q_i|^4.
\end{equation}
A direct calculation for \qref{eq:sextic-two-squares} gives
\begin{equation}\label{eq:sextic-two-squares-centrality}
        F_i^{(6)}(Y_\phi)=-80q_i,
        \qquad i=1,\ldots,8,
\end{equation}
for every collision-free value of $\phi$.  Thus the relative angle is again
free and the central multiplier is
\begin{equation}\label{eq:sextic-two-squares-lambda}
        \lambda=-80.
\end{equation}

As in the quartic example, every collision-free member of the family is
strictly convex.  To distinguish its similarity classes, write its points as
$z_1,\ldots,z_8\in\mathbb C$ and define
\begin{equation}\label{eq:sextic-shape-invariant}
        J_4(Y_\phi)
        :=\frac{1}{8}\left|\sum_{j=1}^8 z_j^4\right|.
\end{equation}
Since each fourth power is equal to $1$ on the first square and to
$e^{4i\phi}$ on the second, one obtains
\begin{equation}\label{eq:sextic-shape-invariant-value}
        J_4(Y_\phi)=|\cos 2\phi|.
\end{equation}
Exactly as in the quartic family, every configuration $Y_\phi$ has center of
mass equal to the origin and all its vertices lie on the unit circle.
Therefore any similarity between two members of the normalized family has
zero translation and scale factor one, and hence reduces to an orthogonal
transformation, possibly followed by a permutation of the eight equal
masses.  Under a rotation the sum
\[
        \sum z_j^4
\]
is multiplied by a complex number of modulus one. Under a reflection the
sum is complex conjugated up to a unit phase, so its absolute value is
unchanged.  Thus $J_4$ in \qref{eq:sextic-shape-invariant} is an invariant of the similarity
class within this family. On the interval
\begin{equation}\label{eq:sextic-fundamental-interval}
        0<\phi<\frac{\pi}{4},
\end{equation}
it is strictly decreasing.  Hence \qref{eq:sextic-two-squares} contains a
continuum of pairwise non-similar, strictly convex central configurations for
one fixed positive mass vector and the fixed exponent $\sigma=6$.

\subsection{The general mechanism and the main results}
\label{subsec:general-mechanism-main-results}

The quartic and sextic families described above suggest that the continuous
families are not a consequence of the particular geometry of triangles and
squares alone.  What the two examples have in common is a more general
averaging property of their vertices on the circle.  In order to formulate
this property, and subsequently the mechanism behind both examples, we first
recall the notion of a weighted spherical design.

Let $V\subset\mathbb R^D$ be a $d$-dimensional Euclidean linear subspace, and
let
\begin{equation}\label{eq:sphere-radius-R}
        S_R(V):=\{x\in V:|x|=R\}
\end{equation}
be the sphere of radius $R$ in $V$.  Thus $S_R(V)$ has dimension $d-1$,
independently of the dimension $D$ of the ambient Euclidean space.  We denote
by $d\nu_R$ the normalized rotation-invariant measure on $S_R(V)$, so that
\begin{equation}\label{eq:normalized-spherical-measure}
        \int_{S_R(V)}d\nu_R=1.
\end{equation}
Let $q_1,\ldots,q_n\in S_R(V)$ carry positive weights
$m_1,\ldots,m_n$, and put
\[
        M=\sum_{i=1}^n m_i.
\]

The following definition expresses when averaging a polynomial over finitely
many weighted points gives exactly the same result as averaging that
polynomial uniformly over the whole sphere.

\begin{definition}[Weighted spherical $k$-design]
\label{def:weighted-spherical-k-design}
Let $k\ge1$ be an integer.  The weighted configuration
\[
        (q_1,\ldots,q_n;m_1,\ldots,m_n)
\]
is called a \emph{weighted spherical $k$-design} if
\begin{equation}\label{eq:weighted-k-design-definition}
        \frac{1}{M}\sum_{i=1}^n m_i p(q_i)
        =
        \int_{S_R(V)}p(x)\,d\nu_R(x)
\end{equation}
for every real polynomial $p:V\to\mathbb R$ of total degree at most $k$.

Thus $k$ specifies the degree up to which the finite weighted configuration
reproduces exactly the corresponding spherical averages.  In particular,
every spherical $k$-design is automatically a spherical $\ell$-design for
every $\ell\le k$.  The largest integer $k$ for which
\qref{eq:weighted-k-design-definition} holds is called the \emph{strength}
of the design.
\end{definition}

The two examples above can now be described in the same language.  The
equal-weight regular triangle $P_3$ is a spherical $2$-design, whereas the
equal-weight square $P_4$ is a spherical $3$-design.  More generally, the
equal-weight regular $(k+1)$-gon $P_{k+1}$ is a spherical $k$-design on the
circle.

At the same time, the homogeneity exponents in the first two examples are
\[
        4=2\cdot2,
        \qquad
        6=2\cdot3.
\]
This suggests the general correspondence
\begin{equation}\label{eq:exponent-design-correspondence}
        \sigma=2k
        \qquad\longleftrightarrow\qquad
        \text{spherical design of strength at least $k$}.
\end{equation}
Thus $\sigma=4$ requires strength $2$, $\sigma=6$ requires strength $3$,
$\sigma=8$ requires strength $4$, and so on.  The next theorem shows that
this correspondence is not merely a pattern exhibited by the two examples:
it gives the general mechanism producing central configurations for all
positive even homogeneity exponents.

\begin{theorem}[Even-power design theorem]
\label{thm:even-power-design}
Let $V\subset\mathbb R^D$ be a $d$-dimensional Euclidean linear subspace,
with $d\ge2$, and let $k\ge1$.  Suppose that positive masses
$m_1,\ldots,m_n$ at points
\[
        q_1,\ldots,q_n\in S_R(V)
\]
form a weighted spherical $k$-design.  Then the configuration is central
for the homogeneous exponent $\sigma=2k$.  More precisely,
\begin{equation}\label{eq:general-even-centrality}
        \sum_{j\ne i}m_j(q_j-q_i)|q_j-q_i|^{2k-2}
        =\lambda_{d,k}q_i,
        \qquad i=1,\ldots,n,
\end{equation}
where
\begin{equation}\label{eq:general-even-lambda}
        \lambda_{d,k}
        =
        -2^{2k-1}MR^{2k-2}
        \frac{\bigl((d-1)/2\bigr)_k}{(d-1)_k}.
\end{equation}
Here
\begin{equation}\label{eq:pochhammer-definition}
        (a)_k=a(a+1)\cdots(a+k-1)
\end{equation}
denotes the rising factorial.
\end{theorem}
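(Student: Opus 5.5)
Since all points lie on the sphere $S_R(V)$, we have $|q_j-q_i|^2=2R^2-2\langle q_i,q_j\rangle$. Fix $i$ and consider the vector-valued function
\[
        G_i(x)=(x-q_i)\bigl(2R^2-2\langle q_i,x\rangle\bigr)^{k-1},\qquad x\in V.
\]
On the sphere it agrees with $(x-q_i)|x-q_i|^{2k-2}$. Each component of $G_i$ is a polynomial in $x$ of total degree at most $k$. Moreover $G_i(q_i)=0$, so the excluded term $j=i$ may be added back without changing the sum. The left-hand side of \eqref{eq:general-even-centrality} therefore equals $\sum_{j}m_jG_i(q_j)$. Applying Definition~\ref{def:weighted-spherical-k-design} componentwise (with respect to an orthonormal basis of $V$), this equals
\[
        M\int_{S_R(V)}(x-q_i)|x-q_i|^{2k-2}\,d\nu_R(x).
\]
Thus the discrete sum is replaced by a continuous average that depends on $q_i$ only through rotation covariance.

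The next step is to show that this integral is a multiple of $q_i$. The measure $d\nu_R$ is invariant under the orthogonal transformations of $V$ fixing $q_i$. Hence the integral is a vector in $V$ fixed by all of them, and since $d\ge2$ it must be parallel to $q_i$. Taking the inner product with $q_i/R^2$, the multiplier is
\[
        \lambda_{d,k}=\frac{M}{R^2}\int_{S_R(V)}\bigl(\langle x,q_i\rangle-R^2\bigr)|x-q_i|^{2k-2}\,d\nu_R .
\]
Write $\langle x,q_i\rangle=R^2t$ with $t\in[-1,1]$. Then $|x-q_i|^2=2R^2(1-t)$, and the pushforward of $d\nu_R$ is the density $c_d(1-t^2)^{(d-3)/2}\,dt$. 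This gives
\[
        \lambda_{d,k}=-2^{k-1}MR^{2k-2}\,c_d\int_{-1}^1(1-t)^{k}(1-t^2)^{(d-3)/2}\,dt .
\]

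It remains to evaluate this Beta integral; this is the only computational step, and the place where care is needed. Substituting $t=2u-1$, we have $(1-t)^k(1-t^2)^{(d-3)/2}\,dt=2^{k+d-2}(1-u)^{k+(d-3)/2}u^{(d-3)/2}\,du$. The integral therefore becomes $2^{k+d-2}B\bigl(\tfrac{d-1}{2},k+\tfrac{d-1}{2}\bigr)$, and the normalization is $c_d^{-1}=2^{d-2}B\bigl(\tfrac{d-1}{2},\tfrac{d-1}{2}\bigr)$. Their ratio is
\[
        2^k\,\frac{\Gamma(k+\tfrac{d-1}{2})\,\Gamma(d-1)}{\Gamma(\tfrac{d-1}{2})\,\Gamma(k+d-1)}
        =2^k\,\frac{((d-1)/2)_k}{(d-1)_k}.
\]
Multiplying by $2^{k-1}$ yields $2^{2k-1}$ and hence exactly \eqref{eq:general-even-lambda}.

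As a check, the case $d=2$, $k=2$, $M=6$, $R=1$ gives $-8\cdot6\cdot\frac{(1/2)(3/2)}{2}=-18$, in agreement with \eqref{eq:quartic-two-triangles-lambda}. The sextic example gives $-32\cdot8\cdot\frac{5}{16}=-80$, in agreement with \eqref{eq:sextic-two-squares-lambda}. Finally, the centre of mass is $0$ because the design is in particular a $1$-design, so $\frac1M\sum_i m_iq_i=\int x\,d\nu_R=0$. Hence \eqref{eq:general-even-centrality} is indeed the centrality condition \eqref{eq:homogeneous-central-intro}.
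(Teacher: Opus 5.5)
Your proof is correct and follows essentially the same route as the paper. The identity $|x-q_i|^2=2(R^2-\langle q_i,x\rangle)$ makes the force a polynomial of degree $\le k$, the design property turns the sum into $M$ times a spherical average, stabilizer invariance makes that average radial, and the radial component is computed as a ratio of Beta integrals. The one small difference is that the paper first shows the scalar is the same at every point via $O(V)$-equivariance and transitivity, whereas you get this directly from the explicit formula, since the pushforward density of $t$ does not depend on $q_i$.
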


For $d=2$, \qref{eq:general-even-lambda} takes the particularly simple form
\begin{equation}\label{eq:planar-general-even-lambda}
        \lambda_{2,k}
        =-\frac12\binom{2k}{k}MR^{2k-2}.
\end{equation}
The values $-18$ in \qref{eq:quartic-two-triangles-lambda} and $-80$ in
\qref{eq:sextic-two-squares-lambda} are the cases $k=2$ and $k=3$ of
\qref{eq:planar-general-even-lambda}, with respectively $M=6$ and $M=8$ and
$R=1$.

A second feature of the two examples is the freedom to rotate one block
relative to the other.  The design-theoretic part of this phenomenon is
classical.  We shall use the following standard closure property of weighted
spherical designs.  In the language of cubature rules it is simply the fact
that positive combinations of exact normalized quadrature rules remain
exact after renormalization.  An explicit weighted formulation is given, for
example, in \cite{MohammadpourWaldron2019}.

\begin{lemma}[Standard closure of weighted spherical designs]
\label{lem:weighted-design-closure}
Let $V\subset\mathbb R^D$ be a $d$-dimensional Euclidean linear subspace, and
let $X_1,\ldots,X_s$ be weighted spherical $k$-designs on the same sphere
$S_R(V)$.  For each block choose an arbitrary constant $c_a>0$ and an
orthogonal transformation $Q_a\in O(V)$.  Multiply every weight in $X_a$ by
$c_a$, apply $Q_a$ to all its points, and then take the weighted union of the
resulting blocks.  The union is again a weighted spherical $k$-design on
$S_R(V)$.
\end{lemma}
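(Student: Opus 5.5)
The plan is to verify the defining identity \qref{eq:weighted-k-design-definition} directly for the combined configuration. By linearity the check reduces to a single block, after which the orthogonal invariance of the normalized measure $d\nu_R$ does the work.

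Denote the points of $X_a$ by $q^{(a)}_1,\ldots,q^{(a)}_{n_a}$ with weights $m^{(a)}_i$, and write $M_a=\sum_i m^{(a)}_i$. The transformed block consists of the points $Q_aq^{(a)}_i$ with weights $c_am^{(a)}_i$. Since $Q_a\in O(V)$ preserves norms, these points still lie on $S_R(V)$, so the union is a weighted configuration on the same sphere. Its total mass is $M'=\sum_a c_aM_a$. One small point needs attention first. If two transformed points from different blocks coincide, the weighted union is understood as a single point carrying the sum of the weights. This is harmless, because the weighted sum $\sum m\,p(q)$ is unchanged by merging coincident points. For centrality purposes one simply notes the convention.

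Next fix a real polynomial $p:V\to\R$ of total degree at most $k$. For each $a$, the composite $p\circ Q_a$ is again a polynomial on $V$ of total degree at most $k$, because $Q_a$ is linear. Applying the design property of $X_a$ to $p\circ Q_a$ gives
\[
        \sum_i m^{(a)}_i\, p\bigl(Q_aq^{(a)}_i\bigr)
        = M_a\int_{S_R(V)}p(Q_ax)\,d\nu_R(x)
        = M_a\int_{S_R(V)}p(x)\,d\nu_R(x).
\]
The second equality is where the rotation invariance of $d\nu_R$ enters. It is stated for the full orthogonal group $O(V)$, including reflections, and indeed the normalized surface measure is $O(V)$-invariant, being the unique such probability measure.

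Finally, multiply the identity for block $a$ by $c_a$ and sum over $a$. The left-hand side becomes $\sum_a\sum_i c_am^{(a)}_i\,p(Q_aq^{(a)}_i)$ and the right-hand side becomes $M'\int p\,d\nu_R$. Dividing by $M'>0$ yields exactly \qref{eq:weighted-k-design-definition} for the union. There is no real obstacle here. The only step requiring care is the invariance of $\nu_R$ under reflections and the bookkeeping for possibly coincident points, and both are routine.
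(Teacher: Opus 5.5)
Your proof is correct and is the standard argument the paper has in mind. The paper does not reprove the lemma; it cites it as the convex-union property of exact cubature rules together with the $O(V)$-invariance of $d\nu_R$. Your write-up spells out exactly those two steps: first apply each block's design identity to $p\circ Q_a$, then sum the block identities with weights $c_a$ and divide by $\sum_a c_aM_a$.
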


The constants $c_a$ are independent.  Thus the total masses carried by the
different design blocks may be chosen independently and need not be equal.
For an equal-weight block, such as a regular polygon or the vertex set of a
Platonic solid, this means that all vertices of a given block have one common
positive mass, but this common mass may be different from block to block.

Combining this classical closure property with
Theorem~\ref{thm:even-power-design} gives the mechanical consequence that we
shall use repeatedly.

\begin{corollary}[Relative rotations of central configurations from designs]
\label{cor:relative-rotations-designs}
Under the assumptions of Lemma~\ref{lem:weighted-design-closure}, for
$\sigma=2k$ every collision-free configuration in the family
\begin{equation}\label{eq:rotated-design-union}
        Q_1X_1\cup\cdots\cup Q_sX_s
\end{equation}
is central.  The positive total masses of the individual blocks are
independent parameters.
\end{corollary}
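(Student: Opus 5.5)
The plan is to combine Lemma~\ref{lem:weighted-design-closure} with Theorem~\ref{thm:even-power-design}; the corollary is essentially their composition. First I fix the data. Let $X_1,\ldots,X_s$ be weighted spherical $k$-designs on the common sphere $S_R(V)$, with $\dim V=d\ge2$. Choose constants $c_a>0$ and orthogonal maps $Q_a\in O(V)$. The configuration $Q_1X_1\cup\cdots\cup Q_sX_s$ is then the weighted union in which every point of block $a$ is moved by $Q_a$ and its weight is multiplied by $c_a$.

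By Lemma~\ref{lem:weighted-design-closure}, this weighted union is again a weighted spherical $k$-design on $S_R(V)$. If one wants to see this directly, the check is short. Each $Q_a$ preserves $S_R(V)$ and the normalized measure $\nu_R$, and $p\circ Q_a$ is a polynomial of the same degree. Hence each rotated block reproduces the spherical averages of all polynomials of degree at most $k$. Forming the normalized weighted sum with total masses $c_aM_a$ gives a convex combination of equal averages, which is again the spherical average.

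The one point needing care is collisions, and I expect it to be the main (mild) obstacle. Theorem~\ref{thm:even-power-design} is stated for points $q_1,\ldots,q_n$ carrying positive masses. The $n$-body problem, however, requires pairwise distinct positions. The hypothesis that the configuration is collision-free means that no point of $Q_aX_a$ coincides with a point of $Q_bX_b$, and that within each block the points are already distinct. Thus the union is a genuine list of $n=\sum_a|X_a|$ distinct points with positive masses, and no merging of coincident points is needed. If collisions were permitted, one could merge coincident points by adding their masses. The design property would survive this, but the result would no longer be a member of the family as stated, so we simply exclude those parameter values.

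Finally, I apply Theorem~\ref{thm:even-power-design} with $\sigma=2k$. The configuration is central, with
\[
        \lambda=\lambda_{d,k}
        =-2^{2k-1}MR^{2k-2}\frac{\bigl((d-1)/2\bigr)_k}{(d-1)_k},
        \qquad
        M=\sum_a c_aM_a .
\]
The center of mass is the origin, since the degree-one moments of a $1$-design vanish, so $q_i-c=q_i$. The multiplier depends only on $M$, $R$, $d$ and $k$, not on the $Q_a$, so the whole rotated family is central with one common $\lambda$. The $c_a$ enter only through $M$, and they are arbitrary positive numbers, which proves that the block total masses are independent parameters.
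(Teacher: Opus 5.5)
Your proposal is correct and is essentially the paper's own proof. Lemma~\ref{lem:weighted-design-closure} makes the rescaled, rotated union a weighted spherical $k$-design on $S_R(V)$, and Theorem~\ref{thm:even-power-design} then gives centrality of every collision-free member for $\sigma=2k$; your direct check of the closure step, the collision remark, and the explicit multiplier $\lambda_{d,k}$ with $M=\sum_a c_aM_a$ are correct additions that the paper leaves implicit.
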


Corollary~\ref{cor:relative-rotations-designs} proves centrality, but by
itself it does \emph{not} yet prove that the rotation parameters give
different similarity classes.  This distinction is important.  The group
$SO(d)$ has dimension $d(d-1)/2$, but a parameter count alone does not show
that the map from relative orientations to geometric configurations is
locally finite-to-one.  One must prove that, for generic relative
orientations, the individual rotated blocks can be recovered from the metric
of their union using the internal distances of the fixed blocks, and that
only finitely many orientation choices can produce the same weighted
configuration up to similarity.

The following theorem gives precisely this missing quotient statement.

\begin{theorem}[Generic moduli of independently rotated design blocks]
\label{thm:generic-rotated-design-moduli}
Let $k\ge2$, let $V$ be a $d$-dimensional Euclidean space with $d\ge2$, and
let
\[
        X_1,\ldots,X_s\subset S_R(V),
        \qquad s\ge2,
\]
be finite weighted spherical $k$-designs with fixed positive weights.
For
\[
        (Q_1,\ldots,Q_s)\in SO(V)^s
\]
consider the weighted union
\begin{equation}\label{eq:general-rotated-block-family}
        \mathcal X(Q_1,\ldots,Q_s)
        :=
        Q_1X_1\cup\cdots\cup Q_sX_s.
\end{equation}
Then there exists an open dense subset
\[
        \mathcal U\subset SO(V)^s
\]
such that every configuration \qref{eq:general-rotated-block-family} with
$(Q_1,\ldots,Q_s)\in\mathcal U$ is collision-free and has the following
properties.

\begin{enumerate}
\item[(i)]
Once the fixed block data $X_1,\ldots,X_s$ are given, the decomposition into
the $s$ rotated design blocks can be recovered from the Euclidean metric of
the weighted union: one uses the finite set of distances which occur inside
the fixed blocks.  In particular, every isometry between two generic unions
must permute the rotated blocks, up to a finite permutation of mutually
congruent blocks carrying compatible weights.

\item[(ii)]
After quotienting by a common orthogonal transformation of $V$, by the
finite orthogonal symmetry groups of the individual blocks, and by the
finite permutations allowed in {\rm(i)}, the map from relative orientations
to similarity classes is locally finite-to-one.

\item[(iii)]
Consequently the generic local dimension of the resulting moduli space of
weighted configurations is
\begin{equation}\label{eq:general-rotated-block-moduli-dimension}
        (s-1)\dim SO(d)
        =
        (s-1)\frac{d(d-1)}2.
\end{equation}
\end{enumerate}

By Corollary~\ref{cor:relative-rotations-designs}, all these collision-free
configurations are central for $\sigma=2k$.
\end{theorem}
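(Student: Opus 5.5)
The plan is to define $\mathcal U$ as the complement of finitely many proper real-algebraic subsets of the connected real-algebraic variety $SO(V)^s$. Such a complement is automatically open and dense. Three kinds of bad sets will be removed.

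First, the collision set: pairs $(a,x)\ne(b,y)$ with $Q_ax=Q_by$, for $x\in X_a$, $y\in X_b$ and $a\ne b$. Each such condition is a proper subvariety, because one may rotate $Q_b$ slightly while $y$ stays on the sphere and $d\ge2$.

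Second, the "accidental distance" set. Let $\Delta$ be the finite set of distances occurring inside the fixed blocks, $\Delta=\{|x-x'|:x\ne x'\in X_a,\ \text{some }a\}$. For $a\ne b$ we remove every relative orientation for which some cross distance $|Q_ax-Q_by|$ lies in $\Delta$. We also remove those for which the cross distances coincide with one another in a way that does not persist identically. The function $Q_b\mapsto|Q_ax-Q_by|^2=2R^2-2\langle Q_ax,Q_by\rangle$ is nonconstant on $SO(V)$, so each level set is again proper.

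Third, for part (ii), the set where the differential of the orbit map considered below drops rank. This set is proper provided the rank is maximal at a single point.

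For (i), I would use the fact that on $\mathcal U$ every pair of distinct points whose distance lies in $\Delta$ belongs to one block. Consider the graph $\Gamma$ on the union whose edges are the pairs at distance in $\Delta$. Each block is a connected subgraph, since the complete graph on $X_a$ has all its distances in $\Delta$. No edge joins two blocks. Hence the connected components of $\Gamma$ are exactly the rotated blocks, and they are determined by the metric alone.

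One caveat must be handled here. Since $k\ge2$, each $X_a$ contains at least $d+1$ points that are not contained in a hemisphere-type degenerate set; it is a $2$-design, hence its points span $V$ and are not all equal. An isometry between two generic unions preserves distances, so it maps components to components. Therefore it permutes the blocks, and only congruent blocks with matching weights can be exchanged. I would use the $2$-design property ($\sum m_i q_iq_i^T=\frac{MR^2}{d}I$) to guarantee that every block spans $V$. This is also where $k\ge2$ enters.

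For (ii) and (iii), consider the map $\Phi:SO(V)^s\to\{\text{weighted configurations}\}/\!\sim$. By (i) and the centering argument of Subsection~\ref{subsec:first-quartic-family}, a similarity between two generic members fixes the origin and has scale $1$: the center of mass is $0$ because the $1$-design property holds for every block, and every point lies on $S_R(V)$. So the similarity is an element $Q\in O(V)$ that permutes the blocks. After composing with an allowed permutation, it sends $Q_aX_a$ to $Q'_aX_a$. Thus $Q'^{-1}_aQQ_a$ lies in the finite symmetry group $G_a=\{g\in O(V): gX_a=X_a \text{ preserving weights}\}$. This group is finite because $X_a$ spans $V$.

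It follows that the fiber of $\Phi$ over a generic point is contained in the finite union of orbits $\{(QQ_ag_a)_a\}$, one for each choice of permutation and of $(g_a)\in\prod G_a$. Each orbit is a copy of the $\dim SO(d)$-dimensional group of common rotations. Quotienting by common rotations therefore leaves a finite fiber, which gives local finite-to-one-ness. The dimension count $(s-1)\dim SO(d)$ then follows, since $SO(V)^s/SO(V)$ has exactly that dimension and a locally finite-to-one map preserves local dimension.

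The main obstacle is the genericity of the second kind of bad set. One must check that coincidences between cross distances and $\Delta$ do not hold identically for some structural reason. I expect this to be settled by the explicit linear dependence of $\langle Q_ax,Q_by\rangle$ on $Q_b$. After fixing $Q_a=I$, this inner product ranges over all of $[-R^2,R^2]$ as $Q_b$ varies. Hence it equals a given value only on a hypersurface of $SO(V)$, and finitely many such hypersurfaces have dense complement.
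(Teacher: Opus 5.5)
Your proposal is correct and follows essentially the same route as the paper: remove the finitely many nowhere-dense sets where a cross-block distance equals $0$ or an internal distance, recover the blocks as the connected components of the metric graph, and show that any similarity is orthogonal via centering and the common radius. From there, finiteness of the sets $\operatorname{Iso}(X_a,X_b)$ (each block spans $V$ by the second-moment identity) gives finite fibres modulo the common rotation. Your third bad set, the rank drop of an unspecified orbit map whose properness you leave unverified, is never used in the argument, because finite-to-one-ness already follows from those finite isometry sets, so you should simply drop it.
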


For two blocks, Theorem~\ref{thm:generic-rotated-design-moduli} gives the
particularly important dimension
\begin{equation}\label{eq:two-block-moduli-dimension}
        \frac{d(d-1)}2.
\end{equation}
Thus two independently oriented design blocks give generically a
one-dimensional family in configuration dimension $d=2$, a
three-dimensional family in configuration dimension $d=3$, a
six-dimensional family in configuration dimension $d=4$, and so on.
The integers $d$ and $k$ play completely different roles: $d$ controls the
dimension of the rotation group, whereas $k$ controls the positive even
homogeneity exponent $\sigma=2k$ through
Theorem~\ref{thm:even-power-design}.

We next specialize these general results to the most elementary symmetric
design blocks.

A single regular polygon with equal masses at its vertices is central for
every homogeneous exponent $\sigma$ for which the force is defined.  The
same is true for each of the five Platonic solids with equal masses at its
vertices.  We briefly justify this symmetry argument.

Let $G\subset O(V)$ be the full orthogonal symmetry group of the regular
polygon or Platonic solid.  Since all vertex masses are equal, every
$g\in G$ acts by a permutation of the weighted vertices.  The force law
\qref{eq:homogeneous-field-intro} is orthogonally equivariant: if $gq_i=q_j$,
then
\begin{equation}\label{eq:symmetric-block-force-equivariance}
        F_j^{(\sigma)}(q)
        =
        gF_i^{(\sigma)}(q).
\end{equation}
Indeed, $g$ preserves all pairwise distances and carries every difference
$q_\ell-q_i$ to the corresponding difference between the permuted
vertices.

Fix a vertex $q_i$.  Every element of the stabilizer
\[
        G_{q_i}:=\{g\in G:gq_i=q_i\}
\]
therefore fixes the vector $F_i^{(\sigma)}(q)$.  For a regular polygon, using
the full dihedral symmetry group, the common fixed subspace of $G_{q_i}$ is
the radial line $\mathbb R q_i$; the same is true for the stabilizer of a
vertex of each Platonic solid.  Hence
\[
        F_i^{(\sigma)}(q)=\lambda_i q_i
\]
for some scalar $\lambda_i$.

Finally, $G$ acts transitively on the vertices.  If $gq_i=q_j$, then
\qref{eq:symmetric-block-force-equivariance} gives
\[
        F_j^{(\sigma)}(q)
        =
        gF_i^{(\sigma)}(q)
        =
        \lambda_i gq_i
        =
        \lambda_i q_j.
\]
Thus $\lambda_j=\lambda_i$, so the radial coefficient is the same at every
vertex.  Therefore the configuration is central.  In particular, the
centrality of one regular polygon or one Platonic solid does not depend on a
special value of $\sigma$.

The role of the spherical-design strength is different.  It determines for
which positive even exponents the centrality survives after several such
blocks, placed on the same sphere, are rotated independently of one another.
The relevant design strengths are classical; see
\cite{DelsarteGoethalsSeidel1977,BannaiBannai2009}.  Throughout the
polygonal and Platonic constructions below, ``equal masses on a block''
means equal masses only within that block.  Different blocks may carry
different common vertex masses.

For regular polygons the design strengths are summarized in
Table~\ref{tab:polygon-design-strengths}.

\begin{table}[H]
\centering
\begin{tabular}{c|c|c}
regular block & design strength &
positive even exponents allowed by Corollary~\ref{cor:relative-rotations-designs}
\\
\hline
$P_3$ & $2$ & $\sigma=4$ \\
$P_4$ & $3$ & $\sigma=4,6$ \\
$P_5$ & $4$ & $\sigma=4,6,8$ \\
$P_n$ & $n-1$ & $\sigma=2k$, $2\le k\le n-1$
\end{tabular}
\caption{Spherical-design strengths of equal-weight regular polygons on the
circle and the corresponding positive even exponents for arbitrary relative
rotations of design blocks.}
\label{tab:polygon-design-strengths}
\end{table}

In three dimensions the analogous data for the Platonic solids are shown in
Table~\ref{tab:platonic-design-strengths}.  The tetrahedron has strength $2$,
the cube and the octahedron have strength $3$, and the dodecahedron and the
icosahedron have strength $5$ \cite{BannaiBannai2009}.  Notice again that
these strengths do not describe the exponents for which a \emph{single}
Platonic solid is central: each single solid is central for every $\sigma$.
They describe instead the exponents for which arbitrary relative rotations of
several Platonic design blocks remain central by
Corollary~\ref{cor:relative-rotations-designs}.

\begin{table}[H]
\centering
\begin{tabular}{c|c|c|c}
Platonic solid & vertices & design strength & positive even exponents allowed \\
\hline
tetrahedron & $4$ & $2$ & $\sigma=4$ \\
cube & $8$ & $3$ & $\sigma=4,6$ \\
octahedron & $6$ & $3$ & $\sigma=4,6$ \\
dodecahedron & $20$ & $5$ & $\sigma=4,6,8,10$ \\
icosahedron & $12$ & $5$ & $\sigma=4,6,8,10$
\end{tabular}
\caption{Spherical-design strengths of the equal-weight vertex sets of the
five Platonic solids and the positive even exponents covered by
Corollary~\ref{cor:relative-rotations-designs}.}
\label{tab:platonic-design-strengths}
\end{table}

These observations give continuous families in dimensions two and three
without requiring the individual design blocks to be congruent.  What
matters is not that the blocks have the same shape, or even the same design
strength, but that they are spherical $k$-designs on the same sphere for the
value of $k$ under consideration.  Equivalently, each block must have design
strength at least $k$.

\begin{corollary}[Regular polygonal and Platonic relative-rotation families]
\label{cor:polygonal-platonic-relative-rotation-families}
Let all blocks in each of the following statements be inscribed in the same
sphere.

\begin{enumerate}
\item[(i)]
Let $P_m$ and $P_n$ be regular polygons on the same circle, where
\[
        m\ge k+1,
        \qquad
        n\ge k+1.
\]
Give every vertex of $P_m$ one common mass $\mu_m>0$, and every vertex of
$P_n$ one common mass $\mu_n>0$.  The two numbers $\mu_m$ and $\mu_n$ are
arbitrary and are not required to be equal.  Then, for every relative angle
$\phi$, every collision-free union
\begin{equation}\label{eq:mixed-polygon-relative-rotation-family}
        P_m\cup e^{i\phi}P_n
\end{equation}
is a central configuration for $\sigma=2k$.  The two polygons need not have
the same number of vertices.  For fixed $\mu_m$ and $\mu_n$, the generic
collision-free family, modulo similarities and the finite symmetries of the
two blocks, is locally one-dimensional.

\item[(ii)]
Let $X$ and $Y$ be vertex sets of Platonic solids, and let $t_X$ and $t_Y$
be their design strengths from Table~\ref{tab:platonic-design-strengths}.
Give all vertices of $X$ one common mass $\mu_X>0$ and all vertices of $Y$
one common mass $\mu_Y>0$, with no relation imposed between $\mu_X$ and
$\mu_Y$.  For every integer $k$ with
\[
        2\le k\le\min\{t_X,t_Y\},
\]
the configuration
\begin{equation}\label{eq:platonic-relative-rotation-family}
        X\cup QY,
        \qquad Q\in SO(3),
\end{equation}
is central for $\sigma=2k$ whenever the two vertex sets are disjoint.  The
two Platonic solids may be different.  For fixed $\mu_X$ and $\mu_Y$, the
generic collision-free family, modulo similarities and the finite symmetry
groups of the two blocks, is locally three-dimensional.
\end{enumerate}
\end{corollary}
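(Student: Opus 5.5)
The plan is to deduce both parts directly from Corollary~\ref{cor:relative-rotations-designs} and Theorem~\ref{thm:generic-rotated-design-moduli}. The only new inputs are the design strengths of the individual blocks and some bookkeeping with the weights.

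\emph{Step 1: each block is a weighted $k$-design.*} For part (i), the equal-weight regular $m$-gon is a spherical $(m-1)$-design on the circle (Table~\ref{tab:polygon-design-strengths}). To justify this I would use the basis $\operatorname{Re} z^a,\operatorname{Im} z^a$, $\operatorname{Re}(z^a\bar z^b)$ of polynomials of degree $\le k$ on $S_R$. On the circle $z^a\bar z^b=R^{2b}z^{a-b}$, so it suffices to know that $\sum_{\zeta^m=1}\zeta^{j}=0$ for $0<|j|\le m-1$. Hence $m\ge k+1$ makes $P_m$ a $k$-design, and the same holds for $P_n$. For part (ii), the strengths $t_X,t_Y$ are the classical values quoted in Table~\ref{tab:platonic-design-strengths}. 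Since a $t$-design is an $\ell$-design for every $\ell\le t$ (Definition~\ref{def:weighted-spherical-k-design}), the hypothesis $k\le\min\{t_X,t_Y\}$ makes both $X$ and $Y$ $k$-designs.

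\emph{Step 2: the weights.} Definition~\ref{def:weighted-spherical-k-design} normalizes by $M$, so multiplying all weights of a block by a constant $\mu>0$ preserves the design property. The blocks with common masses $\mu_m,\mu_n$ (respectively $\mu_X,\mu_Y$) are therefore exactly the rescaled blocks $c_aX_a$ of Lemma~\ref{lem:weighted-design-closure}. Applying that lemma with $Q_1=\mathrm{id}$ and $Q_2=e^{i\phi}$ (respectively $Q$) shows that the union is a weighted $k$-design. Theorem~\ref{thm:even-power-design} then gives centrality for $\sigma=2k$ whenever the union is collision-free. Equivalently, one can invoke Corollary~\ref{cor:relative-rotations-designs} directly. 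Collision-freeness is exactly the disjointness hypothesis in (ii) and is assumed in (i).

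\emph{Step 3: dimension count.} Here I apply Theorem~\ref{thm:generic-rotated-design-moduli} with $s=2$, which requires $k\ge2$; this holds in (ii) by assumption and is implicit in (i), since $\sigma=2k$ is a positive even exponent. By \qref{eq:general-rotated-block-moduli-dimension} the generic local dimension is $d(d-1)/2$, which equals $1$ for $d=2$ and $3$ for $d=3$.

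I must also check that the parametrization matches. The theorem uses pairs $(Q_1,Q_2)\in SO(V)^2$ modulo a common rotation. This quotient is identified with the single relative rotation $Q_1^{-1}Q_2$, that is, $e^{i\phi}$ or $Q$, so fixing the first block loses nothing. The open dense set $\mathcal U$ then corresponds to an open dense set of relative orientations.

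The main obstacle is Step~3, in the case where the two blocks are congruent and carry equal masses, for example $P_m\cup e^{i\phi}P_m$ with $\mu_m=\mu_n$, or two cubes. In that case isometries may swap the blocks. This is exactly the finite permutation allowed in Theorem~\ref{thm:generic-rotated-design-moduli}(i), so the count is still finite-to-one and the local dimension is unchanged. I would remark on this explicitly, since otherwise the corollary is an immediate specialization.
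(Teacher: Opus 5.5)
Your proposal is correct and follows the paper's route: centrality comes from Corollary~\ref{cor:relative-rotations-designs} applied to the blockwise-rescaled designs, and the local dimensions $1$ and $3$ are the specializations $d=2$ and $d=3$ of Theorem~\ref{thm:generic-rotated-design-moduli} with $s=2$, exactly as in Subsection~\ref{subsec:polygonal-platonic-higher-dimensional}. Your additional checks are consistent with the paper and slightly more explicit than what it writes; they are the verification of the polygon strength via $\sum_{\zeta^m=1}\zeta^j=0$, the reduction to the single relative rotation $Q_1^{-1}Q_2$, and the remark on swapping congruent equal-mass blocks.
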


Thus the relevant condition is a \emph{common lower bound on the design
strength}, not equality of the blocks.  For example, a square and a regular
pentagon may be combined on the same circle for $\sigma=4$ or $\sigma=6$,
since their strengths are $3$ and $4$, respectively; the common mass on the
square and the common mass on the pentagon may be chosen independently.
Likewise, an octahedron and a dodecahedron may be combined on the same
two-sphere for $\sigma=4$ or $\sigma=6$, since their strengths are $3$ and
$5$, and again the common vertex mass of the octahedron need not equal that
of the dodecahedron.  The phrase ``on the same sphere'' is essential: a
regular polygon regarded as a design on its intrinsic circle $S^1$ cannot be
combined by this result with a Platonic design on $S^2$, because the two
blocks are not designs of the same spherical space $S_R(V)$.

The construction does not stop with two blocks.

\begin{corollary}[Multiparameter families from several design blocks]
\label{cor:multiparameter-design-block-families}
Let $s\ge2$.

\begin{enumerate}
\item[(i)]
For each $a=1,\ldots,s$ let $P_{n_a}$ be a regular polygon on one common
circle and suppose that
\[
        n_a\ge k+1.
\]
Choose arbitrary numbers $\mu_a>0$, and give every vertex of the block
$P_{n_a}$ the common mass $\mu_a$.  The numbers $\mu_a$ may vary freely from
block to block.  For arbitrary angles $\phi_1,\ldots,\phi_s$, every
collision-free union
\begin{equation}\label{eq:many-regular-polygons}
        \bigcup_{a=1}^{s} e^{i\phi_a}P_{n_a}
\end{equation}
is a central configuration for $\sigma=2k$.  For fixed choices of the
$\mu_a$, the generic collision-free moduli space has local dimension
\begin{equation}\label{eq:polygon-many-block-dimension}
        s-1.
\end{equation}

\item[(ii)]
Let $X_1,\ldots,X_s$ be vertex sets of Platonic solids on one common
two-sphere and suppose that every $X_a$ has design strength at least $k$.
Choose arbitrary numbers $\mu_a>0$, and give every vertex of $X_a$ the
common mass $\mu_a$.  Again the numbers $\mu_a$ are independent.  Then for
arbitrary $Q_1,\ldots,Q_s\in SO(3)$ every collision-free union
\begin{equation}\label{eq:many-platonic-blocks}
        \bigcup_{a=1}^{s} Q_aX_a
\end{equation}
is a central configuration for $\sigma=2k$.  For fixed choices of the
$\mu_a$, the generic collision-free moduli space has local dimension
\begin{equation}\label{eq:platonic-many-block-dimension}
        3(s-1).
\end{equation}
\end{enumerate}
\end{corollary}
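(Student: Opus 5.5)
The plan is to deduce both parts directly from Corollary~\ref{cor:relative-rotations-designs} for centrality and from Theorem~\ref{thm:generic-rotated-design-moduli} for the moduli dimension. No new computation is needed beyond checking the hypotheses.

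\emph{Step one: each block is a weighted spherical $k$-design on the common sphere.} In part (i), the regular polygon $P_{n_a}$ with equal vertex masses is a spherical $(n_a-1)$-design on the circle, by Table~\ref{tab:polygon-design-strengths}. If one wants to verify this, note that the power sums $\sum_{\zeta^{n_a}=1}\zeta^j$ vanish for $0<|j|<n_a$. Every real polynomial of degree $\le n_a-1$ on $\R^2$ restricted to $S_R$ is a trigonometric polynomial of degree $\le n_a-1$. Its uniform average therefore equals the vertex average. Since $n_a\ge k+1$, each block has strength at least $k$, so it is a $k$-design; multiplying all weights by $\mu_a$ does not affect this. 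In part (ii), the hypothesis that each $X_a$ has strength $\ge k$ (Table~\ref{tab:platonic-design-strengths}) gives the same conclusion on $S^2$. Rotations $e^{i\phi_a}$, respectively $Q_a\in SO(3)$, are orthogonal transformations of $V$.

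\emph{Step two: centrality.} Lemma~\ref{lem:weighted-design-closure} applies, with constants $c_a=\mu_a$ or any normalization, to the rotated blocks. The weighted union is therefore a weighted spherical $k$-design on $S_R(V)$. Theorem~\ref{thm:even-power-design} then makes every collision-free union central for $\sigma=2k$; this is exactly Corollary~\ref{cor:relative-rotations-designs}. One point needs care. If two blocks share a vertex, the union is not collision-free, and the corresponding configurations are excluded by hypothesis. So we never need to merge masses.

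\emph{Step three: dimension count.} Here we invoke Theorem~\ref{thm:generic-rotated-design-moduli}(iii) with $k\ge2$ and $s\ge2$. For $d=2$ we have $\dim SO(2)=1$, which gives local dimension $s-1$. For $d=3$ we have $\dim SO(3)=3$, which gives $3(s-1)$. The generic set $\mathcal U$ is open dense in $SO(V)^s$. It is therefore nonempty inside the collision-free locus, so the stated local dimension is attained on a generic collision-free subfamily.

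The only delicate point is the bookkeeping that the masses $\mu_a$ are held fixed while the angles vary. This is compatible with the ``fixed positive weights'' assumption in Theorem~\ref{thm:generic-rotated-design-moduli}. The finite identifications coming from congruent blocks with equal $\mu_a$ do not change local dimensions. Beyond this I expect no real obstacle, since the corollary is a direct specialization.
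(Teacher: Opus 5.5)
Your proposal is correct and follows the paper's own route: centrality comes from Corollary~\ref{cor:relative-rotations-designs} (the closure lemma combined with Theorem~\ref{thm:even-power-design}), and the local dimensions $s-1$ and $3(s-1)$ are the cases $d=2$ and $d=3$ of Theorem~\ref{thm:generic-rotated-design-moduli}(iii). Your check that an equal-weight regular $n_a$-gon is an $(n_a-1)$-design, via the vanishing power sums, is a harmless addition; the paper simply quotes this from its tables.
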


The dimensions in this corollary are the special cases $d=2$ and $d=3$ of
\qref{eq:general-rotated-block-moduli-dimension}.  More generally, for fixed
$d$ and fixed exponent $\sigma=2k$, the dimension of the resulting moduli is
not bounded independently of the number of particles: by increasing the
number of independently rotated design blocks it grows like
\[
        (s-1)\frac{d(d-1)}2.
\]

There is also a useful geometric consequence in a different direction.

\begin{corollary}[Arbitrarily dense finite central configurations]
\label{cor:dense-central-configurations}
Fix $d\ge2$, $k\ge2$, and a finite spherical $k$-design
$X\subset S_R(V)$.  For every $\varepsilon>0$ there exist finitely many
rotations
\[
        Q_1,\ldots,Q_s\in SO(V)
\]
such that the union
\[
        Q_1X\cup\cdots\cup Q_sX
\]
is collision-free, is an $\varepsilon$-net of $S_R(V)$, and is a central
configuration for the exponent $\sigma=2k$.
\end{corollary}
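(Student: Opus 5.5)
The proof has three parts: centrality, collision-freeness, and density. Centrality follows directly from Lemma~\ref{lem:weighted-design-closure} and Theorem~\ref{thm:even-power-design}. For arbitrary rotations $Q_1,\ldots,Q_s\in SO(V)$ with all weights scaled by $c_a=1$, the weighted union $Q_1X\cup\cdots\cup Q_sX$ is again a weighted spherical $k$-design on $S_R(V)$. By Corollary~\ref{cor:relative-rotations-designs} it is then central for $\sigma=2k$ whenever it is collision-free. It therefore suffices to choose the rotations so that the union is collision-free and is an $\varepsilon$-net.

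For the net property, I will use compactness of $S_R(V)$. Fix any point $x_0\in X$. Choose a finite $\varepsilon$-net $\{y_1,\ldots,y_s\}$ of $S_R(V)$; for instance, take a maximal $\varepsilon/2$-separated set, which is finite by compactness. Since $d\ge2$, the group $SO(V)$ acts transitively on $S_R(V)$, so for each $a$ there is $Q_a^0\in SO(V)$ with $Q_a^0x_0=y_a$. Then $\{Q_a^0x_0\}$ already forms an $\varepsilon/2$-net if we started with an $\varepsilon/2$-net, and a fortiori the full union $\bigcup_a Q_a^0X$ is one. I will work with the $\varepsilon/2$ version so that there is room to perturb.

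Collision-freeness is the only step needing care, and I expect it to be the main (though mild) obstacle. The points within each block $Q_aX$ are already distinct, so a collision can only occur between different blocks, i.e.\ $Q_ax=Q_by$ for some $a\ne b$ and $x,y\in X$. For fixed $a\ne b$ and $x,y\in X$, the collision set
\[
        \{(Q_1,\ldots,Q_s):Q_ax=Q_by\}
\]
is a closed subset of the connected real-analytic manifold $SO(V)^s$, cut out by a real-analytic equation. Moreover it is proper. Indeed, given $Q_b$, the condition $Q_ax=Q_by$ confines $Q_a$ to a coset of the stabilizer of a point of the sphere. This stabilizer is isomorphic to $SO(d-1)$, of dimension $\dim SO(d)-(d-1)<\dim SO(d)$. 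Hence each collision set is a closed set with empty interior. The union of these finitely many sets is closed and nowhere dense, and its complement is open and dense.

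Choose $(Q_1,\ldots,Q_s)$ in this complement with each $Q_a$ so close to $Q_a^0$ that $|Q_ax_0-y_a|<\varepsilon/2$. This is possible by continuity of the action together with density of the complement. Then $\{Q_ax_0\}$ is an $\varepsilon$-net, so the union is an $\varepsilon$-net. It is collision-free by construction, and it is central for $\sigma=2k$ by the first step, with multiplier $\lambda_{d,k}$ computed from $M=s\sum_{x\in X}m_x$ via \qref{eq:general-even-lambda}.
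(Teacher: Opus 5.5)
Your proposal is correct and follows the paper's proof step by step: fix $x_0\in X$, use transitivity to carry $x_0$ onto the points of a finite net, perturb the rotations into the open dense collision-free set, finish with the triangle inequality, and get centrality from Lemma~\ref{lem:weighted-design-closure} and Theorem~\ref{thm:even-power-design}. The differences are cosmetic: you use an $\varepsilon/2$ budget instead of $\varepsilon/3$, and you prove directly, via the stabilizer-coset dimension count, that each collision set has empty interior, where the paper invokes Lemma~\ref{lem:generic-block-recovery}; this also spares you the paper's need to arrange $s\ge2$.
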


Thus the finite central configurations produced by the design mechanism may
become arbitrarily dense on the underlying sphere without changing the
homogeneity exponent.

For the basic planar counterexample to finiteness it is enough to return to
two equal blocks.  To obtain a uniform series indexed directly by $k$, and at
the same time to keep the number of vertices minimal within this
regular-polygon construction, we choose the smallest regular polygon whose
strength reaches $k$, namely $P_{k+1}$.  Let
\begin{equation}\label{eq:regular-kplus1-gon}
        P_{k+1}
        =
        \left\{
        e^{2\pi i j/(k+1)}:j=0,\ldots,k
        \right\}
        \subset S^1.
\end{equation}

\begin{theorem}[Planar polygonal families]
\label{thm:planar-polygonal-families}
For every integer $k\ge2$, give equal mass $1$ to the $2(k+1)$ points of
\begin{equation}\label{eq:general-two-polygons}
        X_\phi^{(k)}
        =
        P_{k+1}\cup e^{i\phi}P_{k+1}.
\end{equation}
If
\begin{equation}\label{eq:general-polygon-collision-condition}
        \phi\not\equiv0\pmod{2\pi/(k+1)},
\end{equation}
then $X_\phi^{(k)}$ is a strictly convex central configuration for
$\sigma=2k$, with
\begin{equation}\label{eq:general-polygon-lambda}
        \lambda
        =
        -(k+1)\binom{2k}{k}.
\end{equation}
Moreover, the family contains a continuum of pairwise non-similar
configurations.  More precisely, on the interval
\begin{equation}\label{eq:general-polygon-fundamental-interval}
        0<\phi<\frac{\pi}{k+1},
\end{equation}
the quantity
\begin{equation}\label{eq:general-polygon-shape-invariant}
        J_{k+1}(X_\phi^{(k)})
        :=
        \frac{1}{2(k+1)}
        \left|
        \sum_{z\in X_\phi^{(k)}}z^{k+1}
        \right|
\end{equation}
is equal to
\begin{equation}\label{eq:general-polygon-shape-invariant-value}
        J_{k+1}(X_\phi^{(k)})
        =
        \left|
        \cos\frac{(k+1)\phi}{2}
        \right|
\end{equation}
and is strictly decreasing.  Hence, for every positive even exponent
$\sigma=2k$, $k\ge2$, there exists a fixed positive mass vector for which
there are infinitely many similarity classes of central configurations.
\end{theorem}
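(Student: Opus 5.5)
The plan is to derive everything from Theorem~\ref{thm:even-power-design} and Lemma~\ref{lem:weighted-design-closure}; only the shape invariant needs a separate computation.

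\emph{Centrality.} First I will check that $P_{k+1}$ with equal weights is a spherical $k$-design on the unit circle. On $S^1$ a real polynomial of degree at most $k$ restricts to a trigonometric polynomial in $z^a\bar z^b$ with $a+b\le k$, that is, to a combination of the characters $z^{a-b}$ with $|a-b|\le k$. For $1\le|m|\le k$ the average of $z^m$ over the $(k+1)$-st roots of unity vanishes because $k+1\nmid m$. The average of the constant character is $1$, which matches the uniform measure. Hence the design property holds, and Lemma~\ref{lem:weighted-design-closure} (with $c_a=1$ and rotations $1,e^{i\phi}$) shows that $X^{(k)}_\phi$ is a $k$-design. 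The rotated vertices are $e^{i\phi}\omega^j$, so the two blocks collide exactly when $e^{i\phi}$ is a $(k+1)$-st root of unity. Thus \qref{eq:general-polygon-collision-condition} is precisely collision-freeness. Theorem~\ref{thm:even-power-design} with $d=2$, $R=1$ and $M=2(k+1)$ then gives centrality with multiplier \qref{eq:planar-general-even-lambda}, namely $-\tfrac12\binom{2k}{k}\cdot 2(k+1)=-(k+1)\binom{2k}{k}$. As a consistency check I will verify the passage from \qref{eq:general-even-lambda} to \qref{eq:planar-general-even-lambda}: $(1/2)_k/k!=\binom{2k}{k}4^{-k}$. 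Strict convexity follows as in Subsection~\ref{subsec:first-quartic-family}: the $2(k+1)$ points are distinct and lie on a circle, so each is an extreme point of the convex hull.

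\emph{Shape invariant.} Each point of $P_{k+1}$ satisfies $z^{k+1}=1$, and each point of $e^{i\phi}P_{k+1}$ satisfies $z^{k+1}=e^{i(k+1)\phi}$. The sum is therefore $(k+1)(1+e^{i(k+1)\phi})$, and its modulus is $2(k+1)|\cos((k+1)\phi/2)|$. This gives \qref{eq:general-polygon-shape-invariant-value}. On $0<\phi<\pi/(k+1)$ the argument $(k+1)\phi/2$ lies in $(0,\pi/2)$, where cosine is positive and strictly decreasing.

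\emph{Similarity invariance.} This is the step needing the most care, though it copies the quartic case. Every $X^{(k)}_\phi$ has center of mass $0$, since each block sums to zero, and lies on the unit circle. Any similarity between two members must fix the center of mass, so $a=0$, and it must preserve the radius, so $s=1$. It is therefore an element of $O(2)$, possibly composed with a permutation of the equal masses. A rotation by $e^{i\theta}$ multiplies $\sum z^{k+1}$ by $e^{i(k+1)\theta}$, and a reflection $z\mapsto e^{i\theta}\bar z$ conjugates the sum up to a phase. Permutations leave the sum unchanged. So $J_{k+1}$ is a similarity invariant within the family. Its strict monotonicity on the interval then yields a continuum of pairwise non-similar central configurations for the fixed mass vector $(1,\dots,1)$ and $\sigma=2k$. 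This proves the final assertion.
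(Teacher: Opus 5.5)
Your proposal is correct and follows essentially the paper's proof. Centrality and the multiplier come from the design property of $P_{k+1}$, the closure lemma and Theorem~\ref{thm:even-power-design}; strict convexity comes from the points lying on a circle; and non-similarity comes from the $(k+1)$-st moment. The one small difference is that the paper proves invariance with the normalized quantity $\mathcal J_n(Z)$ (centered and divided by $\sum|z_j-c(Z)|^n$), which is a similarity invariant for arbitrary equal-mass configurations and reduces to $J_{k+1}$ on the family, whereas you first reduce any similarity between members to an element of $O(2)$, as in the quartic example of the Introduction; both arguments are valid.
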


The equal-mass normalization in
Theorem~\ref{thm:planar-polygonal-families} is chosen only to display the
simplest fixed-mass family and the clean multiplier
\qref{eq:general-polygon-lambda}.  Already
Lemma~\ref{lem:weighted-design-closure} allows the two polygonal blocks to
carry two different common positive vertex masses while preserving the
relative-angle modulus.  The phenomenon is considerably more robust: it
persists even when the individual vertex masses are perturbed independently.

\begin{theorem}[Persistence under perturbation of the masses]
\label{thm:open-mass-persistence}
Fix $k\ge2$ and a collision-free member $X_{\phi_0}^{(k)}$ of
\qref{eq:general-two-polygons}.  There exists a neighborhood $\mathcal U$ of
the equal-mass vector
\begin{equation}\label{eq:equal-mass-vector-general}
        (1,\ldots,1)\in\mathbb R_{>0}^{2(k+1)}
\end{equation}
such that, for every mass vector in $\mathcal U$, the weighted spherical
$k$-designs near $X_{\phi_0}^{(k)}$ form, after quotienting by similarities
and by the finite permutations which preserve that mass vector, a local
moduli space of dimension one.  After the neighborhood is chosen
sufficiently small, all representatives remain collision-free and strictly
convex.  By Theorem~\ref{thm:even-power-design}, they are central
configurations for the fixed exponent $\sigma=2k$.  In particular, for
every mass vector in $\mathcal U$ there are locally infinitely many
similarity classes of central configurations.
\end{theorem}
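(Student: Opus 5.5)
Since the points remain on the unit circle and centrality is supplied by Theorem~\ref{thm:even-power-design}, the whole question is the local structure of the set of weighted $k$-designs on $S^1$ with a prescribed mass vector. I parametrize points by angles, $z_j=e^{i\theta_j}$, $j=1,\ldots,N$ with $N=2(k+1)$. On the circle, the space of polynomials of degree at most $k$ restricted to $S^1$ is spanned by $1$ and $\cos\ell\theta,\sin\ell\theta$ for $1\le\ell\le k$. Hence the weighted $k$-design condition for masses $m=(m_1,\ldots,m_N)$ is the system
\[
        \Phi_\ell(\theta,m):=\sum_{j=1}^N m_j e^{i\ell\theta_j}=0,
        \qquad \ell=1,\ldots,k.
\]
This gives a smooth map $\Phi:\R^N\times\R^N_{>0}\to\mathbb C^k\cong\R^{2k}$. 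I will first check that at $(\theta^0,\mathbf 1)$, where $\theta^0$ are the angles of $X^{(k)}_{\phi_0}$, the partial differential $D_\theta\Phi$ has full rank $2k$. Then the implicit function theorem shows that for $m$ near $\mathbf 1$ the zero set $\Phi(\cdot,m)=0$ near $\theta^0$ is a smooth submanifold of dimension $N-2k=2$. It depends smoothly on $m$ and contains the rigid rotations $\theta\mapsto\theta+t(1,\ldots,1)$. Quotienting by rotations therefore leaves a one-dimensional local manifold. Translations and scalings play no further role: as in Subsection~\ref{subsec:first-quartic-family}, the center of mass is the origin because $\Phi_1=0$, and all points lie on the unit circle.

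The rank computation is the main step. The differential is
\[
        D_\theta\Phi_\ell(\theta^0,\mathbf 1)[\dot\theta]=i\ell\sum_j e^{i\ell\theta^0_j}\dot\theta_j .
\]
Full real rank $2k$ means that the real $2k\times N$ matrix with rows $\cos\ell\theta^0_j$ and $\sin\ell\theta^0_j$ has rank $2k$. Equivalently, no nonzero trigonometric polynomial $f(\theta)=\sum_{\ell=1}^k(a_\ell\cos\ell\theta+b_\ell\sin\ell\theta)$ vanishes at all $N$ points. Suppose such an $f$ exists. The restriction of $f$ to $P_{k+1}$ (the angles $2\pi j/(k+1)$) must vanish, and on $P_{k+1}$ the modes $\ell$ and $\ell-(k+1)$ coincide. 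Such an $f$ has at most $2k$ zeros, counted as the roots of $e^{ik\theta}f(\theta)$, a polynomial of degree $2k$ in $e^{i\theta}$, possibly with its $\ell=0$ coefficient zero. But $f$ is required to vanish at $N=2k+2>2k$ distinct points, so $f\equiv0$. The rank is therefore full, and this zero-counting argument is the crux.

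Non-triviality modulo similarity follows next. Along the perturbed one-parameter family, the invariant $J_{k+1}$ varies with nonzero derivative at $\phi_0$ for $m=\mathbf 1$, whenever $\phi_0$ is in the fundamental interval \qref{eq:general-polygon-fundamental-interval}. Other collision-free $\phi_0$ reduce to this case by symmetry, except at the isolated critical points of $\cos$. At those points I would instead use the orbit argument: the residual freedom is not a rotation, because the family is transverse to the rotation orbit by the dimension count. By continuity, $J_{k+1}$ stays non-constant for nearby $m$, after replacing it with its weighted analogue $\frac{1}{M}\bigl|\sum m_jz_j^{k+1}\bigr|$. This is an $O(2)$-invariant, so it separates similarity classes up to the finitely many mass-preserving permutations, which give only finite identifications. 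Collision-freeness and strict convexity are open conditions. They hold at $X^{(k)}_{\phi_0}$ because the points are distinct points on a circle, and so they persist after shrinking $\mathcal U$ and the neighborhood in $\theta$. Finally, Theorem~\ref{thm:even-power-design} gives centrality for $\sigma=2k$, and the one-dimensional moduli yield infinitely many similarity classes.
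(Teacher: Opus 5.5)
Your proof is correct in substance and has the same overall structure as the paper's: Fourier moment equations, full angular rank at $X^{(k)}_{\phi_0}$, the parameterized implicit-function theorem, removal of the common rotation, and $a=0$, $s=1$ from centering and the unit circle. The two substantive steps, however, are done differently.

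\textbf{The rank of $D_\theta\mathcal F$.} The paper diagonalizes $D_\theta\mathcal F$ by the discrete Fourier transform on each $(k+1)$-gon. It then uses $e^{i(k+1)\phi_0}\ne1$ to force every nonzero mode of the variation to vanish, which identifies the kernel explicitly as the two independent rigid rotations. Your zero-counting argument is shorter: a nonzero trigonometric polynomial of degree at most $k$ has at most $2k$ zeros, so it cannot vanish at $2k+2$ distinct nodes. It uses only the distinctness of the nodes, and it gives rank $2k$ at \emph{every} collision-free configuration with at least $2k+1$ nodes and arbitrary positive masses, not just at $X^{(k)}_{\phi_0}$. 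What it does not give is the kernel itself. You only learn that the kernel is two-dimensional and contains the common rotation, but that is all the implicit-function step needs. The aside about modes $\ell$ and $\ell-(k+1)$ coinciding on $P_{k+1}$ plays no role and can be dropped.

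\textbf{The similarity quotient.} The paper uses no invariant here. Once the gauge $\theta_0=\theta_0^0$ removes $SO(2)$, the only remaining identifications are one reflection class and finitely many mass-preserving permutations. So the equivalence classes on the gauged curve are finite, and the local dimension stays one, uniformly in $\phi_0$. Your weighted invariant $\frac1M\bigl|\sum_j m_jz_j^{k+1}\bigr|$ gives more where its derivative along the curve is nonzero: the curve then injects locally into the moduli space.

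\textbf{The exceptional values of $\phi_0$.} Your case split, however, is misdescribed. The critical points of $\cos\frac{(k+1)\phi}{2}$ are collision values. The genuinely exceptional collision-free values are its zeros, $\phi_0\equiv\pi/(k+1)\pmod{2\pi/(k+1)}$. At these values $X^{(k)}_{\phi_0}$ is the regular $2(k+1)$-gon, and $X^{(k)}_{\phi_0+\varepsilon}$ and $X^{(k)}_{\phi_0-\varepsilon}$ are mirror images of each other. So the curve folds two-to-one onto the moduli space there, and the remark that the family is transverse to the rotation orbit settles nothing by itself. To close this case, do one of the following:
\begin{itemize}
\item[(a)] invoke the paper's finiteness of the remaining identifications; or
\item[(b)] note that $J_{k+1}$ is still non-constant near $\phi_0$, since it equals $\bigl|\sin\frac{(k+1)(\phi-\phi_0)}{2}\bigr|$ there; choose two parameter values with different $J_{k+1}$-values, and keep the weighted values different for nearby $m$ by continuity.
\end{itemize}
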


Theorem~\ref{thm:open-mass-persistence} is the statement relevant to the
generic-finiteness question in
Problem~\ref{prob:generic-homogeneous-finiteness}.  The continuous families
occur for every mass vector in an open subset of the positive mass space,
rather than only at the equal-mass point.  In particular, the failure of
finiteness cannot be confined to a lower-dimensional exceptional subset of
mass space.

The main conclusions may now be summarized as follows.  The positive even
exponents $\sigma=2k$, $k\ge2$, admit a general spherical-design mechanism
for producing central configurations in every configuration dimension
$d\ge2$.  Classical closure of spherical designs under independent rotations
and positive blockwise rescaling combines with
Theorem~\ref{thm:even-power-design} to produce large families of central
configurations.  Theorem~\ref{thm:generic-rotated-design-moduli} shows that
the rotational parameters survive generically in the similarity quotient,
giving
\[
        (s-1)\frac{d(d-1)}2
\]
local moduli parameters for $s$ independently rotated blocks.  In the plane,
Theorem~\ref{thm:planar-polygonal-families} gives explicit fixed-mass
one-parameter families for every positive even exponent, and
Theorem~\ref{thm:open-mass-persistence} shows that this degeneracy persists
through an open set of positive mass vectors.

\medskip

We finish the Introduction by describing where these assertions are proved.

Section~2 starts from the homogeneous Newton equations and derives, in
arbitrary configuration dimension $d$, the separation between the
time-independent central-configuration equations and the time-dependent
scale and rotation variables.

Section~3 develops the spherical-design mechanism.  The proof of
Theorem~\ref{thm:even-power-design} begins with the weighted averaging
identities, passes through the polynomial form of the even-power force and
the rotational-invariance argument, and ends with the explicit computation
of the multiplier \qref{eq:general-even-lambda}.  The classical closure
Lemma~\ref{lem:weighted-design-closure} is quoted there with reference to the
design literature. Combining it with
Theorem~\ref{thm:even-power-design} gives the short proof of
Corollary~\ref{cor:relative-rotations-designs}.  The section concludes by
discussing minimal and $k$-indecomposable design blocks and by giving the
barycentric description of the admissible positive weights on a fixed
support in Proposition~\ref{prop:fixed-support-weight-polytope}.

Section~4 is devoted to the geometry of the resulting moduli.  Its proof
programme begins by showing that, for generic relative orientations, the
individual design blocks can be recovered from the metric of their union
using the internal distances of the fixed blocks.  It then proves that the map from
\[
        SO(d)^s
\]
to similarity classes is generically finite-to-one after quotienting by the
common rotation and the finite internal symmetries.  This completes the proof
of Theorem~\ref{thm:generic-rotated-design-moduli}.  The polygonal and
Platonic statements
\ref{cor:polygonal-platonic-relative-rotation-families} and
\ref{cor:multiparameter-design-block-families}, as well as
Corollary~\ref{cor:dense-central-configurations}, are then deduced from this
general result.  The section ends with the explicit invariant
\qref{eq:general-polygon-shape-invariant}, which proves the pairwise
non-similarity asserted in
Theorem~\ref{thm:planar-polygonal-families}.

Section~5 proves Theorem~\ref{thm:open-mass-persistence}.  On the circle, the
weighted $k$-design equations are rewritten as the vanishing of the Fourier
moments
\[
        \sum_jm_je^{i\ell\theta_j}=0,
        \qquad
        \ell=1,\ldots,k.
\]
At two relatively rotated regular $(k+1)$-gons the angular linearization is
diagonalized by the discrete Fourier transform.  Collision-freeness implies
that its only kernel directions are the two independent rigid rotations of
the two blocks.  After quotienting by one common rotation, the angular
Jacobian has full rank, and the parameterized implicit-function theorem gives
the required one-dimensional family for every nearby positive mass vector.
The openness of distinctness and strict convexity completes the proof.

\section{From the equations of motion to the central-configuration system}
\label{sec:homographic-reduction}

The notion of a central configuration introduced in the preceding section is
purely algebraic.  We now explain how it enters the dynamics of the homogeneous
$n$-body problem.  The discussion will be carried out for a configuration of
arbitrary intrinsic dimension $d\ge2$.  Our aim is not to classify all possible
homographic motions in higher dimension, but only to describe the branch of
such motions generated by central configurations.  For this branch the
separation between the fixed geometry of the configuration and its
time-dependent scale and orientation is complete.

\subsection{The equations of motion and the center of mass}
\label{subsec:center-of-mass-reduction}

Let
\[
        x_i=x_i(t)\in\mathbb R^D,
        \qquad i=1,\ldots,n,
\]
be the trajectories of $n$ bodies of positive masses $m_i$, and put
\begin{equation}\label{eq:time-dependent-distances}
        r_{ij}(t):=|x_i(t)-x_j(t)|.
\end{equation}
For $\sigma\ne0$ the homogeneous potential is
\begin{equation}\label{eq:homogeneous-potential-body}
        U_\sigma(x_1,\ldots,x_n)
        =\frac{\kappa}{\sigma}
        \sum_{1\le i<j\le n}m_i m_j r_{ij}^{\sigma}.
\end{equation}
Since
\begin{equation}\label{eq:gradient-pair-power}
        \nabla_{x_i}|x_i-x_j|^\sigma
        =\sigma |x_i-x_j|^{\sigma-2}(x_i-x_j),
\end{equation}
Newton's equations $m_i\ddot x_i=-\nabla_{x_i}U_\sigma$ take the form
\begin{equation}\label{eq:homogeneous-equations-body}
        \ddot x_i
        =\kappa\sum_{j\ne i}
        m_j(x_j-x_i)r_{ij}^{\sigma-2},
        \qquad i=1,\ldots,n.
\end{equation}
For $\sigma=0$ the logarithmic potential in
\qref{eq:log-potential} gives the same force law with
$r_{ij}^{\sigma-2}=r_{ij}^{-2}$.  Thus
\qref{eq:homogeneous-equations-body} may be used for all real $\sigma$, with
this logarithmic interpretation at $\sigma=0$.

Let
\begin{equation}\label{eq:center-of-mass-time-dependent}
        c(t):=\frac1M\sum_{i=1}^n m_i x_i(t),
        \qquad
        M:=\sum_{i=1}^n m_i.
\end{equation}
Multiplying \qref{eq:homogeneous-equations-body} by $m_i$ and summing over
$i$, every unordered pair $\{i,j\}$ contributes two opposite vectors.
Therefore
\begin{equation}\label{eq:center-of-mass-free-motion}
        M\ddot c(t)=\sum_{i=1}^n m_i\ddot x_i(t)=0,
\end{equation}
and hence
\begin{equation}\label{eq:center-of-mass-linear-motion}
        c(t)=c_0+vt
\end{equation}
for constant vectors $c_0,v\in\mathbb R^D$.  The center of mass therefore
moves freely and can be separated from the internal motion.

Passing to the relative coordinates
\begin{equation}\label{eq:relative-coordinates}
        y_i(t):=x_i(t)-c(t)
\end{equation}
does not change any pair difference.  The variables $y_i$ satisfy the same
system \qref{eq:homogeneous-equations-body}, together with
\begin{equation}\label{eq:centered-relative-coordinates}
        \sum_{i=1}^n m_i y_i(t)=0.
\end{equation}
We shall consequently perform the internal reduction in the center-of-mass
frame and add the free translation \qref{eq:center-of-mass-linear-motion}
back at the end.

\subsection{The fixed central shape}
\label{subsec:fixed-central-shape}

Let
\[
        q=(q_1,\ldots,q_n)
\]
be a fixed non-collision configuration centered at the origin,
\begin{equation}\label{eq:centered-reference-shape}
        \sum_{i=1}^n m_iq_i=0.
\end{equation}
Let
\begin{equation}\label{eq:intrinsic-space-body}
        V:=\Span\{q_1,\ldots,q_n\},
        \qquad
        d:=\dim V=\dim q.
\end{equation}
The dynamics generated by this shape may be studied intrinsically in the
Euclidean space $V$; a fixed isometric embedding of $V$ into the ambient
$\mathbb R^D$ has no effect on the equations.  We therefore regard the
$q_i$ as vectors in the $d$-dimensional Euclidean space $V$.

Define the force field of the fixed shape by
\begin{equation}\label{eq:reference-force-field}
        F_i^{(\sigma)}(q)
        :=\sum_{j\ne i}
        m_j(q_j-q_i)|q_j-q_i|^{\sigma-2}.
\end{equation}
The configuration is central precisely when there is a real number $\lambda$
such that
\begin{equation}\label{eq:central-force-equation-body}
        F_i^{(\sigma)}(q)=\lambda q_i,
        \qquad i=1,\ldots,n.
\end{equation}
Together with the centering condition
\qref{eq:centered-reference-shape}, this gives the time-independent algebraic
system
\begin{equation}\label{eq:central-configuration-system-body}
\boxed{
\begin{aligned}
        &\sum_{i=1}^n m_iq_i=0,\\[1mm]
        &\sum_{j\ne i}m_j(q_j-q_i)|q_j-q_i|^{\sigma-2}
        =\lambda q_i,
        \qquad i=1,\ldots,n.
\end{aligned}}
\end{equation}
This is the geometric part of the problem.  It contains no time-dependent
quantity.

The sign of $\lambda$ follows directly from
\qref{eq:central-force-equation-body}.  Define the moment of inertia of the
reference shape by
\begin{equation}\label{eq:reference-moment-inertia}
        I(q):=\sum_{i=1}^n m_i|q_i|^2.
\end{equation}
Taking the scalar product of
\qref{eq:central-force-equation-body} with $m_iq_i$, summing over $i$, and
pairing the terms corresponding to $i$ and $j$, one obtains
\begin{equation}\label{eq:lambda-inertia-identity}
        \lambda I(q)
        =-\sum_{1\le i<j\le n}m_i m_j|q_i-q_j|^\sigma.
\end{equation}
For positive masses and a non-collision configuration the right-hand side is
strictly negative.  Thus
\begin{equation}\label{eq:lambda-negative}
        \lambda<0.
\end{equation}

\subsection{Scale and rotation in arbitrary dimension}
\label{subsec:scale-rotation-arbitrary-dimension}

We now keep the central shape $q$ fixed and let only its size and orientation
vary with time.  Let
\[
        \rho(t)>0,
        \qquad
        R(t)\in SO(V),
\]
where $SO(V)$ denotes the orientation-preserving orthogonal group of the
$d$-dimensional Euclidean space $V$.  We consider the motion
\begin{equation}\label{eq:homographic-ansatz-general-d}
        x_i(t)
        =c_0+vt+\rho(t)R(t)q_i.
\end{equation}
The first two terms give the free motion of the center of mass.  All internal
time dependence is contained in the scalar factor $\rho(t)$ and the rotation
$R(t)$.

For every pair $i,j$,
\begin{equation}\label{eq:homographic-pair-difference-general-d}
        x_j-x_i
        =\rho R(q_j-q_i),
\end{equation}
and hence, since $R$ is orthogonal,
\begin{equation}\label{eq:homographic-distance-scaling-general-d}
        |x_j-x_i|=\rho|q_j-q_i|.
\end{equation}
It follows from the homogeneity of the force law that
\begin{equation}\label{eq:force-homogeneity-under-homography-general-d}
        F_i^{(\sigma)}(\rho Rq)
        =\rho^{\sigma-1}R F_i^{(\sigma)}(q).
\end{equation}
For a central shape, \qref{eq:central-force-equation-body} therefore gives
\begin{equation}\label{eq:central-force-under-homography}
        F_i^{(\sigma)}(\rho Rq)
        =\lambda\rho^{\sigma-1}Rq_i.
\end{equation}
This is the first half of the separation: all geometric information has been
reduced to the single constant $\lambda$.

To differentiate the rotation, introduce its body angular velocity
\begin{equation}\label{eq:body-angular-velocity}
        \Omega(t):=R(t)^{-1}\dot R(t).
\end{equation}
Differentiating $R^TR=I$ shows that
\begin{equation}\label{eq:omega-skew}
        \Omega^T=-\Omega,
        \qquad
        \Omega(t)\in\mathfrak{so}(V).
\end{equation}
Moreover,
\begin{equation}\label{eq:Rdot-Rddot}
        \dot R=R\Omega,
        \qquad
        \ddot R=R(\dot\Omega+\Omega^2).
\end{equation}
Differentiating the internal part $\rho Rq_i$ of
\qref{eq:homographic-ansatz-general-d} twice gives
\begin{equation}\label{eq:homographic-acceleration-general-d}
        \ddot x_i
        =R\Bigl[
        \ddot\rho\,I
        +2\dot\rho\,\Omega
        +\rho\dot\Omega
        +\rho\Omega^2
        \Bigr]q_i.
\end{equation}
Combining \qref{eq:homogeneous-equations-body},
\qref{eq:central-force-under-homography}, and
\qref{eq:homographic-acceleration-general-d}, and multiplying by $R^{-1}$,
we obtain
\begin{equation}\label{eq:operator-equation-general-d-on-points}
        \Bigl[
        \ddot\rho\,I
        +2\dot\rho\,\Omega
        +\rho\dot\Omega
        +\rho\Omega^2
        \Bigr]q_i
        =\kappa\lambda\rho^{\sigma-1}q_i
\end{equation}
for every $i$.  Since the vectors $q_i$ span $V$, this is equivalent to the
operator equation
\begin{equation}\label{eq:operator-equation-general-d}
\boxed{
        \ddot\rho\,I
        +2\dot\rho\,\Omega
        +\rho\dot\Omega
        +\rho\Omega^2
        =\kappa\lambda\rho^{\sigma-1}I_V.}
\end{equation}
Equation \qref{eq:operator-equation-general-d} contains only time-dependent
quantities.  The geometric system
\qref{eq:central-configuration-system-body} and the dynamical equation
\qref{eq:operator-equation-general-d} are therefore completely separated.

\subsection{The time-dependent system}
\label{subsec:time-system-arbitrary-dimension}

The operator equation \qref{eq:operator-equation-general-d} splits naturally
into its skew-symmetric and symmetric parts.  Since $\Omega$ and
$\dot\Omega$ are skew-symmetric while $\Omega^2$ is symmetric, the
skew-symmetric part is
\begin{equation}\label{eq:omega-equation}
        2\dot\rho\,\Omega+\rho\dot\Omega=0.
\end{equation}
Multiplying by $\rho$ gives
\begin{equation}\label{eq:angular-momentum-matrix-first-integral}
        \frac{\dd}{\dd t}(\rho^2\Omega)=0.
\end{equation}
Thus there is a constant skew-symmetric endomorphism
\begin{equation}\label{eq:L-constant-skew}
        L\in\mathfrak{so}(V)
\end{equation}
such that
\begin{equation}\label{eq:rho2-omega-L}
        \rho^2\Omega=L,
        \qquad
        \Omega=\frac{L}{\rho^2}.
\end{equation}
This is the higher-dimensional analogue of conservation of angular momentum
in the planar reduction.

The symmetric part of \qref{eq:operator-equation-general-d} is
\begin{equation}\label{eq:symmetric-time-equation}
        \ddot\rho\,I+\rho\Omega^2
        =\kappa\lambda\rho^{\sigma-1}I_V.
\end{equation}
Using \qref{eq:rho2-omega-L}, this becomes
\begin{equation}\label{eq:rho-L-operator-equation}
        \ddot\rho\,I+\frac{L^2}{\rho^3}
        =\kappa\lambda\rho^{\sigma-1}I_V.
\end{equation}
All terms in \qref{eq:rho-L-operator-equation}, except possibly $L^2$, are
scalar multiples of the identity.  Consequently $L^2$ must itself be a
scalar multiple of the identity.  Because $L$ is skew-symmetric,
\[
        \langle L^2u,u\rangle=-|Lu|^2\le0,
\]
so there is a constant $h\ge0$ such that
\begin{equation}\label{eq:L-square-scalar}
\boxed{
        L^2=-h^2I_V.}
\end{equation}
Equation \qref{eq:rho-L-operator-equation} therefore reduces to the single
scalar equation
\begin{equation}\label{eq:radial-reduced-equation}
\boxed{
        \ddot\rho-\frac{h^2}{\rho^3}
        =\kappa\lambda\rho^{\sigma-1}.}
\end{equation}
Together with
\begin{equation}\label{eq:rotation-reduced-equation}
\boxed{
        R^{-1}\dot R=\frac{L}{\rho^2},
        \qquad
        L^T=-L,
        \qquad
        L^2=-h^2I_V,}
\end{equation}
this is the complete time-dependent system associated with the fixed central
shape $q$.

If $h=0$, then \qref{eq:L-square-scalar} implies $L=0$.  The rotation is
constant and the motion is purely homothetic:
\begin{equation}\label{eq:homothetic-rho-equation}
        \ddot\rho=\kappa\lambda\rho^{\sigma-1}.
\end{equation}

Suppose now that $h>0$.  Define
\begin{equation}\label{eq:J-from-L}
        J:=\frac{L}{h}.
\end{equation}
Then
\begin{equation}\label{eq:J-orthogonal-complex-structure}
        J^T=-J,
        \qquad
        J^2=-I_V.
\end{equation}
Thus $J$ is an orthogonal complex structure on $V$.  In particular, a
non-trivial rotational motion of this central type is possible only when
$d$ is even.  Introducing a scalar angle $\theta(t)$ by
\begin{equation}\label{eq:theta-dot-general-d}
        \dot\theta=\frac{h}{\rho^2},
\end{equation}
we may write
\begin{equation}\label{eq:R-isoclinic-form}
        R(t)=R_0\exp\bigl(\theta(t)J\bigr),
\end{equation}
where a constant shift of $\theta$ has been absorbed into $R_0$.
Equations \qref{eq:radial-reduced-equation} and
\qref{eq:theta-dot-general-d} are equivalently
\begin{equation}\label{eq:rho-theta-system-general-d}
\boxed{
\begin{aligned}
        \ddot\rho-\rho\dot\theta^2
        &=\kappa\lambda\rho^{\sigma-1},\\
        2\dot\rho\dot\theta+\rho\ddot\theta
        &=0.
\end{aligned}}
\end{equation}
Thus the familiar two-scalar reduction survives in every even intrinsic
dimension, but the corresponding rotation is the one-parameter subgroup
generated by a fixed orthogonal complex structure $J$.  For $d=2$ this is
the ordinary planar rotation.  For odd $d$, a full-dimensional central
configuration admits only the homothetic branch within the present
scale-and-rotation ansatz.

For $\sigma\ne0$, the radial equation
\qref{eq:radial-reduced-equation} has the first integral
\begin{equation}\label{eq:reduced-energy-first-integral}
        \frac12\dot\rho^2
        +\frac{h^2}{2\rho^2}
        -\frac{\kappa\lambda}{\sigma}\rho^\sigma
        =E_{\mathrm{red}},
\end{equation}
while for $\sigma=0$ the last term is replaced by
$-\kappa\lambda\log\rho$.

A rigidly rotating relative equilibrium occurs when $\rho(t)=\rho_0$ is
constant and $h>0$.  Then
\begin{equation}\label{eq:relative-equilibrium-frequency}
        \omega^2
        =-\kappa\lambda\rho_0^{\sigma-2},
\end{equation}
where $\omega=\dot\theta$ is constant.  Since $\lambda<0$ by
\qref{eq:lambda-negative}, such relative equilibria exist for every central
configuration in even intrinsic dimension after an orthogonal complex
structure $J$ has been chosen.  In odd intrinsic dimension, the rotational
branch is absent for a full-dimensional central configuration.

Conversely, the reduced system obtained above is sufficient for the original
equations of motion.  Indeed, suppose that $\rho(t)$ and $R(t)$ satisfy
\qref{eq:rotation-reduced-equation} and
\qref{eq:radial-reduced-equation} for constants $L$ and $h$ with
$L^2=-h^2I_V$.  With
\[
        \Omega:=R^{-1}\dot R=\frac{L}{\rho^2},
\]
we have
\[
        \dot\Omega
        =-2\frac{\dot\rho}{\rho}\Omega,
\]
and therefore
\[
        2\dot\rho\,\Omega+\rho\dot\Omega=0.
\]
Thus the skew-symmetric part of
\qref{eq:operator-equation-general-d} is satisfied.  Moreover,
\[
        \Omega^2=\frac{L^2}{\rho^4}
        =-\frac{h^2}{\rho^4}I_V,
\]
so the radial equation
\qref{eq:radial-reduced-equation} is exactly the symmetric part
\qref{eq:symmetric-time-equation}.  Adding the symmetric and
skew-symmetric parts recovers the full operator equation
\qref{eq:operator-equation-general-d}.  Since
\qref{eq:operator-equation-general-d} was obtained equivalently from the
$n$-body equations for the homographic ansatz, this proves the reverse
implication as well.

The reduction may be summarized as follows.

\begin{proposition}[Central homographic separation in arbitrary dimension]
\label{prop:central-homographic-reduction-general-d}
Let $q=(q_1,\ldots,q_n)$ be a non-collision central configuration of positive
masses, centered at the origin and spanning a Euclidean space $V$ of
dimension $d\ge2$, with central multiplier $\lambda$.  A motion of the form
\begin{equation}\label{eq:central-homographic-ansatz-proposition}
        x_i(t)=c_0+vt+\rho(t)R(t)q_i,
        \qquad
        R(t)\in SO(V),
\end{equation}
satisfies the homogeneous $n$-body equations if and only if there is a
constant $L\in\mathfrak{so}(V)$ and a constant $h\ge0$ such that
\begin{equation}\label{eq:central-homographic-time-system-proposition}
\boxed{
\begin{aligned}
        \rho^2R^{-1}\dot R&=L,\\
        L^2&=-h^2I_V,\\
        \ddot\rho-\frac{h^2}{\rho^3}
        &=\kappa\lambda\rho^{\sigma-1}.
\end{aligned}}
\end{equation}
If $h=0$, the motion is homothetic.  If $h>0$, then $d$ is even,
$J=L/h$ is an orthogonal complex structure, and the rotation has the form
\qref{eq:R-isoclinic-form}; equivalently, the time dependence is described
by the two scalar functions $\rho(t)$ and $\theta(t)$ satisfying
\qref{eq:rho-theta-system-general-d}.
\end{proposition}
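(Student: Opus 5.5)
The statement to be proved is Proposition~\ref{prop:central-homographic-reduction-general-d}. Essentially all of its content has already been derived in Subsections~\ref{subsec:scale-rotation-arbitrary-dimension}--\ref{subsec:time-system-arbitrary-dimension}, so the proof will assemble those computations into two implications.

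For the forward direction, I substitute the ansatz \qref{eq:central-homographic-ansatz-proposition} into \qref{eq:homogeneous-equations-body}. The free term $c_0+vt$ has zero second derivative and drops out of all pair differences. Homogeneity \qref{eq:force-homogeneity-under-homography-general-d} and centrality \qref{eq:central-force-equation-body} give the right-hand side $\kappa\lambda\rho^{\sigma-1}Rq_i$. The formula $\ddot R=R(\dot\Omega+\Omega^2)$ gives the left-hand side \qref{eq:homographic-acceleration-general-d}. Since the $q_i$ span $V$ by \qref{eq:intrinsic-space-body}, the pointwise identities are equivalent to the operator equation \qref{eq:operator-equation-general-d}; this equivalence is the one place where the spanning hypothesis is essential.

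I then split the operator equation into its skew and symmetric parts. The skew part gives $\frac{d}{dt}(\rho^2\Omega)=0$, so $\rho^2\Omega=L$ is constant. In the symmetric part, $L^2$ must then be a scalar multiple of $I_V$. Because $\langle L^2u,u\rangle=-|Lu|^2\le 0$, that scalar is $-h^2$ with $h\ge0$. This yields the radial equation in \qref{eq:central-homographic-time-system-proposition}.

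For the converse, I run the argument backward, as done at the end of Subsection~\ref{subsec:time-system-arbitrary-dimension}. From $\Omega=L/\rho^2$ one gets $\dot\Omega=-2(\dot\rho/\rho)\Omega$, so the skew part holds. Since $\Omega^2=-h^2\rho^{-4}I_V$, the radial equation is exactly the symmetric part. Summing the two parts recovers \qref{eq:operator-equation-general-d}, and applying it to each $q_i$ and multiplying by $R$ gives back Newton's equations.

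Finally, I treat the two cases for $h$.
\begin{itemize}
\item If $h=0$, then $|Lu|^2=0$ for all $u$, so $L=0$. Hence $\dot R=0$, $R$ is constant, and the motion is homothetic.
\item If $h>0$, then $J=L/h$ satisfies $J^T=-J$ and $J^2=-I$. Taking determinants gives $\det(J)^2=(-1)^d$, and since $J$ is real the left side is positive, so $d$ is even.
\end{itemize}
In the case $h>0$, I define $\theta$ by $\dot\theta=h/\rho^2$. Both $R_0\exp(\theta J)$ and $R$ solve the same linear ODE $\dot R=R\,\dot\theta J$, so by uniqueness $R=R_0\exp(\theta J)$. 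Substituting $h=\rho^2\dot\theta$ into the radial equation gives the first line of \qref{eq:rho-theta-system-general-d}. Differentiating $\rho^2\dot\theta=h$ gives the second line.

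No step is a genuine obstacle. The only points needing care are the passage from pointwise to operator equations, which uses the spanning hypothesis, and the uniqueness argument that produces the isoclinic form \qref{eq:R-isoclinic-form}.
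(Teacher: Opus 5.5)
Your proposal is correct and follows the paper's own derivation in Subsections~\ref{subsec:scale-rotation-arbitrary-dimension}--\ref{subsec:time-system-arbitrary-dimension} essentially step for step. The paper likewise obtains the operator equation from the spanning hypothesis, splits it into skew and symmetric parts to get $\rho^2\Omega=L$ and $L^2=-h^2I_V$, and proves the converse by the same backward check. Your determinant argument for the evenness of $d$ and your ODE-uniqueness justification of $R=R_0\exp(\theta J)$ simply make explicit two points that the paper asserts without detail.
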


\section{Spherical designs and the even-power force law}
\label{sec:spherical-design-mechanism}

Section~\ref{sec:homographic-reduction} has separated the dynamics from the
geometry.  Once a central shape has been found, its scale and rotation are
governed by the time-dependent system
\qref{eq:central-homographic-time-system-proposition}.  The geometric problem
that remains is therefore the time-independent system
\qref{eq:central-configuration-system-body}.  From now on we concentrate on
that system.

Our principal interest is in the positive even exponents
\[
        \sigma=2k,
        \qquad k=2,3,4,\ldots,
\]
although the argument below also includes $k=1$, the Hooke case.  The special
feature of these exponents is extremely elementary: when two points lie on
the same sphere, the force produced by the power $2k$ becomes a polynomial of
degree at most $k$ in the coordinates of the second point.  This is exactly
the degree controlled by a spherical $k$-design.
The main purpose of the present section is to establish the analytic
connection between spherical designs and the even-power force law and then
to combine it with a standard closure property of weighted spherical
designs.  After these results have been proved, we shall also clarify two
structural questions which are important for their later use: what should be
regarded as an elementary, or indecomposable, design block, and how much
freedom remains in the positive weights when the support of a design is held
fixed.  The separate geometric problem of determining how many genuinely
different similarity classes arise from the rotation parameters will be
treated in Section~4.

More precisely, the first result proved in the present section is the
even-power design theorem, Theorem~\ref{thm:even-power-design}.  Its proof
begins in Subsection~\ref{subsec:weighted-averages-sphere}, where the design
condition is translated into identities for weighted averages.  The
even-power force is converted into a polynomial in
Subsection~\ref{subsec:force-becomes-polynomial}; its spherical average is
shown to be radial in
Subsection~\ref{subsec:averaged-force-radial}; and the proof of
Theorem~\ref{thm:even-power-design} ends in
Subsection~\ref{subsec:design-multiplier}, where the common central
multiplier is evaluated explicitly.

The second ingredient is the standard closure property of weighted
spherical designs stated in
Lemma~\ref{lem:weighted-design-closure}.  This is a classical
design-theoretic fact and is quoted from the literature rather than reproved.
In Subsection~\ref{subsec:rotations-unions-designs} we explain its mass
interpretation and combine it with
Theorem~\ref{thm:even-power-design}.  The proof of the resulting mechanical
statement, Corollary~\ref{cor:relative-rotations-designs}, begins and ends in
that subsection.

We finish Section~3 in
Subsection~\ref{subsec:design-atoms-weight-freedom} by discussing minimal
and $k$-indecomposable design blocks and the freedom of the weights on a
fixed support.  The harmonic moment equations are there recast as a
barycentric condition after a finite-dimensional harmonic lift.  This gives
a geometric interpretation of the admissible weight polytope and separates
the fixed-support mass freedom from the moving-support persistence theorem
proved later in Section~5.

It is important to distinguish these conclusions from the moduli statements
announced in the Introduction.  Corollary~\ref{cor:relative-rotations-designs}
shows that independently rotated design blocks remain central, but it does
not by itself show that different rotations determine different similarity
classes.  The latter question requires a separate quotient argument.

Accordingly, Section~4 will prove
Theorem~\ref{thm:generic-rotated-design-moduli}. The proof will show that
for generic relative orientations the individual design blocks can be
recovered from the metric of their union using the internal distances of the
fixed blocks, and that the map from relative orientations to similarity
classes is generically finite-to-one.  The same section will then deduce
Corollaries~\ref{cor:polygonal-platonic-relative-rotation-families} and
\ref{cor:multiparameter-design-block-families}, prove
Corollary~\ref{cor:dense-central-configurations}, and complete the proof of
Theorem~\ref{thm:planar-polygonal-families} by establishing the claimed
pairwise non-similarity of its one-parameter family.

Finally, the persistence theorem
Theorem~\ref{thm:open-mass-persistence} is not a consequence of the present
section.  Its proof will be given separately in Section~5, using the Fourier
form of the weighted design equations, a rank calculation at two rotated
regular polygons, and the parameterized implicit-function theorem.

\subsection{Weighted averages on a sphere}
\label{subsec:weighted-averages-sphere}

Let $V$ be a $d$-dimensional Euclidean vector space, with $d\ge2$, and let
\begin{equation}\label{eq:sphere-body}
        S_R(V):=\{x\in V:|x|=R\}
\end{equation}
be the sphere of radius $R>0$ centered at the origin.  We denote by $d\nu_R$
the rotation-invariant probability measure on this sphere, normalized by
\begin{equation}\label{eq:sphere-measure-normalization-body}
        \int_{S_R(V)}d\nu_R=1.
\end{equation}
Thus, for any function $f$ on the sphere,
\[
        \int_{S_R(V)}f(y)\,d\nu_R(y)
\]
is the average value of $f$ over the whole sphere.

Now choose finitely many points
\[
        q_1,\ldots,q_n\in S_R(V)
\]
with positive masses $m_1,\ldots,m_n$, and put
\[
        M:=\sum_{i=1}^n m_i.
\]
For a scalar function $f$ we shall compare two averages.  The first is the
weighted discrete average over the chosen points,
\begin{equation}\label{eq:discrete-weighted-average}
        \langle f\rangle_q
        :=\frac1M\sum_{i=1}^n m_i f(q_i),
\end{equation}
and the second is the spherical average
\begin{equation}\label{eq:spherical-average}
        \langle f\rangle_{S_R}
        :=\int_{S_R(V)}f(y)\,d\nu_R(y).
\end{equation}

Definition~\ref{def:weighted-spherical-k-design} says precisely that the
weighted set $(q_i,m_i)$ is a spherical $k$-design when
\begin{equation}\label{eq:k-design-average-equality-body}
        \langle p\rangle_q
        =\langle p\rangle_{S_R}
\end{equation}
for every real polynomial $p:V\to\mathbb R$ of total degree at most $k$.
There is no approximation in \qref{eq:k-design-average-equality-body}.  For
all polynomials up to the prescribed degree, the finite weighted sum gives
exactly the same answer as averaging over every point of the sphere.  In this
sense a spherical design is an exact finite quadrature rule for low-degree
polynomials on a sphere.

The same statement may be used for vector-valued polynomials.  If
\[
        P=(P^1,\ldots,P^d):V\longrightarrow V
\]
has scalar components of degree at most $k$, then we apply
\qref{eq:k-design-average-equality-body} separately to every component and
obtain
\begin{equation}\label{eq:vector-design-average}
        \frac1M\sum_{i=1}^n m_iP(q_i)
        =\int_{S_R(V)}P(y)\,d\nu_R(y).
\end{equation}
This elementary observation will be used for the force itself, which is a
vector.

It is useful first to see what the lowest-degree conditions mean.  Suppose
that $k\ge1$.  Taking the linear polynomial
\[
        p_u(y):=u\cdot y,
        \qquad u\in V,
\]
and using the fact that the average of $y$ over a centered sphere is zero,
we find
\[
        u\cdot\left(\frac1M\sum_{i=1}^n m_iq_i\right)=0
\]
for every $u\in V$.  Hence every weighted spherical $k$-design with $k\ge1$
is automatically centered:
\begin{equation}\label{eq:design-first-moment}
        \sum_{i=1}^n m_iq_i=0.
\end{equation}
Thus the center-of-mass condition required in
\qref{eq:central-configuration-system-body} is already built into the design
property.

If $k\ge2$, quadratic polynomials give one more familiar consequence.  We
first derive explicitly the second moment of the rotation-invariant measure
on the sphere.  For $u,v\in V$, define the symmetric bilinear form
\[
        \mathcal B(u,v)
        :=
        \int_{S_R(V)}
        (u\cdot y)(v\cdot y)\,d\nu_R(y).
\]
Let $Q\in O(V)$.  Since $d\nu_R$ is invariant under orthogonal
transformations, the change of variables $y=Qz$ gives
\[
\begin{aligned}
        \mathcal B(Qu,Qv)
        &=
        \int_{S_R(V)}
        (Qu\cdot y)(Qv\cdot y)\,d\nu_R(y)\\
        &=
        \int_{S_R(V)}
        (Qu\cdot Qz)(Qv\cdot Qz)\,d\nu_R(z)\\
        &=
        \int_{S_R(V)}
        (u\cdot z)(v\cdot z)\,d\nu_R(z)\\
        &=
        \mathcal B(u,v).
\end{aligned}
\]
Thus $\mathcal B$ is invariant under the full orthogonal group $O(V)$.  The
only symmetric bilinear forms on the Euclidean space $V$ with this invariance
property are scalar multiples of the Euclidean scalar product.  Hence there
is a constant $c$ such that
\[
        \mathcal B(u,v)=c\,u\cdot v.
\]

It remains to determine $c$.  Choose an orthonormal basis
$e_1,\ldots,e_d$ of $V$.  Since $\mathcal B(u,v)=c\,u\cdot v$, we have
\[
        \mathcal B(e_a,e_b)=c\,\delta_{ab},
\]
and therefore
\[
\begin{aligned}
        dc
        &=
        \sum_{a=1}^d \mathcal B(e_a,e_a)\\
        &=
        \int_{S_R(V)}
        \sum_{a=1}^d(e_a\cdot y)^2\,d\nu_R(y)\\
        &=
        \int_{S_R(V)}
        |y|^2\,d\nu_R(y).
\end{aligned}
\]
Every point $y\in S_R(V)$ satisfies $|y|^2=R^2$, and
$d\nu_R$ is normalized by
\qref{eq:sphere-measure-normalization-body}.  Therefore
\[
        dc=R^2,
        \qquad
        c=\frac{R^2}{d}.
\]
We have consequently proved that, for arbitrary $u,v\in V$,
\begin{equation}\label{eq:sphere-second-moment}
        \int_{S_R(V)}(u\cdot y)(v\cdot y)\,d\nu_R(y)
        =\frac{R^2}{d}\,u\cdot v.
\end{equation}

Now suppose that the weighted configuration is a spherical $2$-design.
The function
\[
        p_{u,v}(y):=(u\cdot y)(v\cdot y)
\]
is a real polynomial of degree $2$.  Hence the design identity
\qref{eq:k-design-average-equality-body}, applied to $p_{u,v}$ and combined
with \qref{eq:sphere-second-moment}, gives
\begin{equation}\label{eq:design-second-moment}
        \frac1M\sum_{i=1}^n
        m_i(u\cdot q_i)(v\cdot q_i)
        =\frac{R^2}{d}\,u\cdot v.
\end{equation}

In particular, the points are not merely centered: their quadratic moment is
isotropic.  These first two moment identities already explain geometrically
why regular polygons and the Platonic solids naturally enter the problem.
The full $k$-design condition continues the same idea to all polynomial
moments of degree at most $k$.

\subsection{Why the force becomes a polynomial}
\label{subsec:force-becomes-polynomial}

We now turn to the exponent
\begin{equation}\label{eq:sigma-equals-2k-body}
        \sigma=2k,
        \qquad k\ge1.
\end{equation}
Fix a point $x\in S_R(V)$.  For another point $y\in S_R(V)$ define the
vector-valued function
\begin{equation}\label{eq:Pkx-definition}
        P_{k,x}(y)
        :=(y-x)|y-x|^{2k-2},
\end{equation}
with the value at $y=x$ understood to be $0$; for $k=1$ this is simply
$P_{1,x}(y)=y-x$.  This is exactly the vector occurring in the force law at
exponent $2k$.
Because $x$ and $y$ lie on the same sphere,
\begin{equation}\label{eq:equal-radius-distance-identity}
\begin{aligned}
        |y-x|^2
        &=|y|^2+|x|^2-2x\cdot y\\
        &=2(R^2-x\cdot y).
\end{aligned}
\end{equation}
Substituting this into \qref{eq:Pkx-definition} gives
\begin{equation}\label{eq:Pkx-polynomial-form}
\boxed{
        P_{k,x}(y)
        =2^{k-1}(y-x)(R^2-x\cdot y)^{k-1}.}
\end{equation}
For fixed $x$, the factor $R^2-x\cdot y$ is linear in $y$.  Hence
$(R^2-x\cdot y)^{k-1}$ has degree $k-1$, and multiplication by $y-x$ raises
the degree by at most one.  Consequently every scalar component of
$P_{k,x}(y)$ is a polynomial in $y$ of degree at most $k$.

This is the point at which the assumption that the exponent is a positive
even integer enters the proof.  For a general exponent the corresponding
factor is no longer a polynomial of bounded degree in $y$.  Consequently,
a finite-strength spherical-design identity does not by itself reproduce
the corresponding spherical average exactly by the argument used here.

Suppose now that $(q_i,m_i)$ is a weighted spherical $k$-design.  Fix one of
its points, say $x=q_i$.  Since the term with $j=i$ is zero, the force at
$q_i$ may be written with the sum extended over all $j$:
\begin{equation}\label{eq:design-force-as-P-sum}
\begin{aligned}
        F_i^{(2k)}(q)
        &=\sum_{j\ne i}m_j(q_j-q_i)|q_j-q_i|^{2k-2}\\
        &=\sum_{j=1}^n m_jP_{k,q_i}(q_j).
\end{aligned}
\end{equation}
The vector-valued polynomial $P_{k,q_i}$ has degree at most $k$, so
\qref{eq:vector-design-average} applies.  Therefore
\begin{equation}\label{eq:design-force-equals-sphere-average}
\boxed{
        F_i^{(2k)}(q)
        =M\int_{S_R(V)}P_{k,q_i}(y)\,d\nu_R(y).}
\end{equation}
The original finite sum has disappeared.  The force on every point of the
design is obtained from one universal spherical integral.

\subsection{Rotational invariance makes the averaged force radial}
\label{subsec:averaged-force-radial}

For $x\in S_R(V)$ put
\begin{equation}\label{eq:Ikx-definition}
        \mathcal I_k(x)
        :=\int_{S_R(V)}P_{k,x}(y)\,d\nu_R(y).
\end{equation}
We claim that this vector must be parallel to $x$.

Let $A\in O(V)$ be any orthogonal transformation satisfying $Ax=x$.  Since
$A$ preserves lengths and scalar products,
\[
        P_{k,x}(Ay)=A P_{k,x}(y).
\]
The measure $d\nu_R$ is invariant under $A$, and hence
\begin{equation}\label{eq:Ikx-stabilizer-invariance}
\begin{aligned}
        A\mathcal I_k(x)
        &=\int_{S_R(V)}A P_{k,x}(y)\,d\nu_R(y)\\
        &=\int_{S_R(V)}P_{k,x}(Ay)\,d\nu_R(y)\\
        &=\mathcal I_k(x).
\end{aligned}
\end{equation}
Thus $\mathcal I_k(x)$ is fixed by every orthogonal transformation that fixes
$x$.  The only vectors with this property are the multiples of $x$.  Indeed,
if a vector had a nonzero component perpendicular to $x$, an orthogonal
reflection fixing $x$ could reverse that perpendicular component.

We therefore have
\[
        \mathcal I_k(x)=\alpha(x)\,x
\]
for some scalar $\alpha(x)$ which, at this stage, could in principle depend
on the point $x$.

We now show that it does not.  Let $Q\in O(V)$ be arbitrary.  Since $Q$
preserves distances,
\begin{equation}\label{eq:Pk-equivariance}
        P_{k,Qx}(Qy)
        =
        QP_{k,x}(y).
\end{equation}
Using the invariance of $d\nu_R$ under $Q$ and making the change of variable
$z=Qy$, we obtain
\begin{align}
        \mathcal I_k(Qx)
        &=
        \int_{S_R(V)}P_{k,Qx}(z)\,d\nu_R(z) \notag\\
        &=
        \int_{S_R(V)}P_{k,Qx}(Qy)\,d\nu_R(y) \notag\\
        &=
        Q\int_{S_R(V)}P_{k,x}(y)\,d\nu_R(y) \notag\\
        &=
        Q\mathcal I_k(x).
        \label{eq:Ik-equivariance}
\end{align}
Thus the averaged force is $O(V)$-equivariant.

Now let $x,x'\in S_R(V)$.  Since the orthogonal group acts transitively on
the sphere, there exists $Q\in O(V)$ such that $Qx=x'$.  Using
\qref{eq:Ik-equivariance}, we find
\[
        \alpha(x')x'
        =
        \mathcal I_k(x')
        =
        \mathcal I_k(Qx)
        =
        Q\mathcal I_k(x)
        =
        \alpha(x)Qx
        =
        \alpha(x)x'.
\]
Since $x'\ne0$, it follows that
\[
        \alpha(x')=\alpha(x).
\]
Hence the scalar is independent of the point of the sphere.  We denote its
common value by $\alpha_{d,k,R}$ and obtain
\begin{equation}\label{eq:Ikx-radial}
\boxed{
        \mathcal I_k(x)=\alpha_{d,k,R}\,x
        \qquad\text{for every }x\in S_R(V).}
\end{equation}

Combining \qref{eq:design-force-equals-sphere-average} and
\qref{eq:Ikx-radial} gives immediately
\begin{equation}\label{eq:design-centrality-before-coefficient}
        F_i^{(2k)}(q)
        =M\alpha_{d,k,R}q_i,
        \qquad i=1,\ldots,n.
\end{equation}
Together with the centering relation \qref{eq:design-first-moment}, this
already proves the centrality assertion of
Theorem~\ref{thm:even-power-design}.  It remains only to compute the common
multiplier.

At this point the essential mechanism has been established.  A weighted
spherical $k$-design converts the discrete even-power force into the spherical
average \qref{eq:design-force-equals-sphere-average}, and rotational invariance
forces that average to be radial.  The design is therefore a central
configuration for the exponent $\sigma=2k$.  This completes the geometric
part of the proof of Theorem~\ref{thm:even-power-design}.  It remains only to
evaluate the scalar $\alpha_{d,k,R}$ in \qref{eq:Ikx-radial}.

\subsection{The central multiplier and completion of the proof}
\label{subsec:design-multiplier}

We now compute the scalar in \qref{eq:Ikx-radial}.  Fix
$x\in S_R(V)$ and write
\[
        e:=\frac{x}{R},
        \qquad |e|=1.
\]
Since $\mathcal I_k(x)=\alpha_{d,k,R}x$, taking the scalar product with $x$
gives
\begin{equation}\label{eq:alpha-from-radial-component}
        \alpha_{d,k,R}R^2
        =x\cdot\mathcal I_k(x).
\end{equation}
Using the definition \qref{eq:Ikx-definition} and putting
\[
        t:=\frac{x\cdot y}{R^2}=e\cdot\frac{y}{R},
        \qquad -1\le t\le1,
\]
we obtain
\begin{align}
 x\cdot P_{k,x}(y)
 &= (x\cdot y-R^2)|y-x|^{2k-2} \notag\\
 &= R^2(t-1)\bigl(2R^2(1-t)\bigr)^{k-1} \notag\\
 &= -2^{k-1}R^{2k}(1-t)^k.
 \label{eq:radial-component-Pkx}
\end{align}
Therefore
\begin{equation}\label{eq:alpha-spherical-moment}
        \alpha_{d,k,R}
        =-2^{k-1}R^{2k-2}
        \int_{S_R(V)}(1-t)^k\,d\nu_R(y).
\end{equation}

It remains to evaluate this one-dimensional spherical moment.  Put
\[
        a:=\frac{d-1}{2}.
\]
We first justify the reduction from the spherical integral to an integral in
the single variable
\[
        t=e\cdot\frac{y}{R}.
\]
For fixed $t\in(-1,1)$, the points of $S_R(V)$ having this value of $t$ form
a $(d-2)$-sphere of radius
\[
        R\sqrt{1-t^2}
\]
in the affine hyperplane perpendicular to $e$.  More explicitly, such a
point may be written as
\[
        y
        =
        R\bigl(
        te+\sqrt{1-t^2}\,\xi
        \bigr),
        \qquad
        \xi\in S_1(e^\perp).
\]
The corresponding spherical surface element is
\[
        dS(y)
        =
        R^{d-1}
        (1-t^2)^{(d-3)/2}\,
        dt\,dS_{d-2}(\xi).
\]
Indeed, the $(d-2)$ directions tangent to the slice contribute the factor
\[
        \bigl(R\sqrt{1-t^2}\bigr)^{d-2},
\]
whereas differentiation in the $t$ direction has length
\[
        \frac{R}{\sqrt{1-t^2}}.
\]
Their product is
\[
        R^{d-1}(1-t^2)^{(d-3)/2}.
\]

Consequently, after integration over the transverse sphere
$S_1(e^\perp)$, every spherical integral of a function depending only on
$t$ is proportional to
\[
        \int_{-1}^{1}
        f(t)(1-t^2)^{(d-3)/2}\,dt.
\]
Since
\[
        a-1=\frac{d-3}{2},
\]
the coordinate $t$ therefore has density proportional to
$(1-t^2)^{a-1}$ on $[-1,1]$.  Because $d\nu_R$ is normalized, the unknown
normalization constant cancels when the desired moment is divided by the
integral of the constant function $1$.  Hence
\begin{equation}\label{eq:spherical-moment-ratio}
\int_{S_R(V)}(1-t)^k\,d\nu_R(y)
=
\frac{\displaystyle
\int_{-1}^{1}(1-t)^k(1-t^2)^{a-1}\,dt}
{\displaystyle
\int_{-1}^{1}(1-t^2)^{a-1}\,dt}.
\end{equation}
Now set
\[
        u:=\frac{1-t}{2}.
\]
Then $1-t=2u$, $1+t=2(1-u)$, and both integrals in
\qref{eq:spherical-moment-ratio} become beta integrals.  Their ratio is
\begin{align}
\int_{S_R(V)}(1-t)^k\,d\nu_R(y)
&=2^k\frac{B(a+k,a)}{B(a,a)} \notag\\
&=2^k\frac{(a)_k}{(2a)_k} \notag\\
&=2^k
\frac{\bigl((d-1)/2\bigr)_k}{(d-1)_k}.
\label{eq:spherical-moment-final}
\end{align}
Here $B$ is the beta function, and in the second equality we used
\[
        \frac{B(a+k,a)}{B(a,a)}
        =\frac{(a)_k}{(2a)_k},
\]
which follows immediately from $B(p,q)=\Gamma(p)\Gamma(q)/\Gamma(p+q)$.

Substituting \qref{eq:spherical-moment-final} into
\qref{eq:alpha-spherical-moment} gives
\begin{equation}\label{eq:alpha-explicit}
\boxed{
        \alpha_{d,k,R}
        =-2^{2k-1}R^{2k-2}
        \frac{\bigl((d-1)/2\bigr)_k}{(d-1)_k}.}
\end{equation}
Finally, by \qref{eq:design-centrality-before-coefficient}, the central
multiplier is
\[
        \lambda_{d,k}=M\alpha_{d,k,R},
\]
and therefore
\[
\boxed{
        \lambda_{d,k}
        =-2^{2k-1}MR^{2k-2}
        \frac{\bigl((d-1)/2\bigr)_k}{(d-1)_k},}
\]
which is exactly \qref{eq:general-even-lambda}.  This proves the stated
formula for the multiplier and completes the proof of
Theorem~\ref{thm:even-power-design}.

\subsection{Classical closure of designs and the mechanical consequence}
\label{subsec:rotations-unions-designs}

We now recall the standard closure statement used in the relative-rotation
construction.  Lemma~\ref{lem:weighted-design-closure} is the elementary
convex-union property of weighted spherical designs, viewed as exact
cubature rules on the sphere.  In particular, if the weights of each design
are normalized to have total mass $1$, then arbitrary positive coefficients
$\beta_1,\ldots,\beta_s$ with
\[
        \beta_1+\cdots+\beta_s=1
\]
may be used to combine the designs; equivalently, before the final
normalization, every block may be multiplied by an independent positive
constant.  This weighted-union principle is stated explicitly in
\cite{MohammadpourWaldron2019}.  Orthogonal images remain designs because the
sphere and its normalized measure are invariant under $O(V)$.  We do not
repeat these standard arguments here.

For later mechanical applications it is useful to spell out the mass
interpretation once.  If an equal-weight block $X_a$ has $N_a$ vertices, we
may assign to every vertex of that block any common mass $\mu_a>0$.  Its
total mass is then
\[
        M_a=N_a\mu_a.
\]
Different blocks may have different values of $\mu_a$.  Thus neither the
classical closure lemma nor the central-configuration consequence below
requires equal masses across different polygonal or Platonic blocks.

\begin{proof}[Proof of Corollary~\ref{cor:relative-rotations-designs}]
By Lemma~\ref{lem:weighted-design-closure}, for arbitrary
$Q_1,\ldots,Q_s\in O(V)$ and arbitrary independent positive rescalings of
the block weights, the union
\[
        Q_1X_1\cup\cdots\cup Q_sX_s
\]
is a weighted spherical $k$-design on the same sphere $S_R(V)$.  Applying
Theorem~\ref{thm:even-power-design}, every collision-free such union is
therefore a central configuration for $\sigma=2k$.  This proves
Corollary~\ref{cor:relative-rotations-designs}.
\end{proof}

This completes the two principal analytic arguments of the present section:
the proof of Theorem~\ref{thm:even-power-design} and the proof of
Corollary~\ref{cor:relative-rotations-designs}.  Before turning to the
moduli problem in Section~4, we use the remaining subsection to clarify the
structure of the design blocks themselves and the freedom of their weights.

\subsection{Minimal and indecomposable design blocks; freedom of the weights}
\label{subsec:design-atoms-weight-freedom}

Corollary~\ref{cor:relative-rotations-designs} shows that a central
configuration may be assembled from several spherical $k$-design blocks, with
an independent positive rescaling of the weights on each block.  Before
turning in Section~\ref{sec:rotated-design-moduli} to the moduli created by
their relative rotations, it is useful to clarify two elementary structural
questions left open by this construction.  First, which weighted designs
should be regarded as elementary blocks, in the sense that they do not split
again into smaller designs of the same strength?  Second, if the support of a
design is held fixed, how freely may its masses be changed?

The discussion below uses the standard spherical-design framework of
\cite{DelsarteGoethalsSeidel1977,BannaiBannai2009}.  We introduce one piece
of terminology adapted specifically to the union construction used in the
present paper.

\begin{definition}[$k$-indecomposable weighted design]
\label{def:k-indecomposable}
Let
\[
        X=\{(q_i,m_i):i=1,\ldots,N\}
\]
be a positive weighted spherical $k$-design on $S_R(V)$.  We call $X$
\emph{$k$-decomposable} if its support admits a partition into two nonempty
sets,
\[
        X=X_1\sqcup X_2,
\]
such that, after retaining the relative weights inherited from $X$ and
allowing one common positive rescaling of all weights in each block, both
$X_1$ and $X_2$ are themselves weighted spherical $k$-designs on the same
sphere.  If no such partition exists, we call $X$
\emph{$k$-indecomposable}.
\end{definition}

The adjective ``indecomposable'' is used here only in the sense of
Definition~\ref{def:k-indecomposable}.  It should not be confused with the
classical notion of a \emph{rigid spherical design}.  Rigidity asks whether
the points of a design can be moved slightly, modulo orthogonal
transformations, while preserving the design property; indecomposability asks
whether one fixed weighted support splits into smaller designs.  The theory
of rigid designs, including the special classification results on the circle,
is reviewed in \cite{BannaiBannai2009}, but no rigidity assertion is used
below.

We first identify the smallest positive weighted $2$-designs.  This case is
especially relevant to the quartic exponent $\sigma=4$.

\begin{theorem}[Minimal positive weighted $2$-designs]
\label{thm:minimal-positive-weighted-2-design}
Let
\[
        q_1,\ldots,q_N\in S_R(V),
        \qquad \dim V=d\ge2,
\]
carry positive normalized weights
\[
        p_i>0,
        \qquad
        \sum_{i=1}^N p_i=1,
\]
and suppose that they form a weighted spherical $2$-design.  Then
\begin{equation}\label{eq:minimal-two-design-node-bound}
        N\ge d+1.
\end{equation}
If equality holds, then
\begin{equation}\label{eq:minimal-two-design-equality}
        p_i=\frac1{d+1},
        \qquad
        q_i\cdot q_j=-\frac{R^2}{d}
        \quad (i\ne j).
\end{equation}
Thus equality holds precisely for the equal-weight vertices of a regular
$d$-simplex inscribed in $S_R(V)$, up to an orthogonal transformation.
\end{theorem}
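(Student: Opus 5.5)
The plan is to work entirely with the two moment identities already derived in Subsection~\ref{subsec:weighted-averages-sphere}: the centering relation \qref{eq:design-first-moment} and the isotropic second-moment identity \qref{eq:design-second-moment}. With normalized weights the latter says that
\[
        \sum_{i=1}^N p_i\, q_iq_i^{T}=\frac{R^2}{d}\,I_V .
\]

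For the bound \qref{eq:minimal-two-design-node-bound}, I would note that the right-hand side has rank $d$. Hence the $q_i$ span $V$, so $N\ge d$. Centering, $\sum p_iq_i=0$ with all $p_i>0$, gives a nontrivial linear dependence among the $q_i$. So $N$ spanning vectors that are linearly dependent force $N\ge d+1$.

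For the equality case $N=d+1$, I would lift to $\R^{d+1}$ and form the $d\times N$ matrix $A$ whose columns are $\sqrt{p_i}\,q_i$. The second-moment identity reads $AA^T=(R^2/d)I_d$, so the rows of $A$ are orthogonal with squared length $R^2/d$. Centering says that the vector $w=(\sqrt{p_1},\ldots,\sqrt{p_N})$ is orthogonal to every row of $A$, and $|w|^2=\sum p_i=1$. Adjoining $\sqrt{R^2/d}\,w$ as an extra row therefore produces a square $N\times N$ matrix $\tilde A$ with $\tilde A\tilde A^T=(R^2/d)I_N$. Since $\tilde A$ is square, this also gives $\tilde A^T\tilde A=(R^2/d)I_N$. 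Reading off entries yields
\[
        p_i|q_i|^2+\frac{R^2}{d}p_i=\frac{R^2}{d},
        \qquad
        \sqrt{p_ip_j}\,q_i\cdot q_j+\frac{R^2}{d}\sqrt{p_ip_j}=0\quad(i\ne j).
\]

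Substituting $|q_i|=R$ into the diagonal equation gives $p_i(R^2+R^2/d)=R^2/d$, that is, $p_i=1/(d+1)$. Since all $p_i>0$, the off-diagonal equation gives $q_i\cdot q_j=-R^2/d$. This is \qref{eq:minimal-two-design-equality}.

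For the final identification, I would argue that the Gram matrix of the $q_i$ is now fixed. Any two configurations with the same Gram matrix and spanning $V$ differ by an element of $O(V)$. The regular simplex inscribed in $S_R(V)$ has exactly this Gram matrix, and conversely it is a $2$-design: its equal weights are centered, and by its symmetry the second moment is isotropic. So equality holds exactly for regular simplices, up to an orthogonal transformation.

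The only step needing care is the ``row-orthogonal square implies column-orthogonal'' step, i.e.\ the completion of $A$ by the centering vector. This step is where the hypothesis $N=d+1$ is used, and it is the main obstacle. The remaining steps are bookkeeping.
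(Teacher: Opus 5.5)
Your proposal is correct and follows essentially the same route as the paper: the paper uses the same matrix $A$ (up to the factor $\sqrt d/R$) with the centering vector $(\sqrt{p_1},\ldots,\sqrt{p_N})$ in its kernel, and your completion of $A$ to a square matrix $\tilde A$ with $\tilde A^T\tilde A=(R^2/d)I_N$ is equivalent to the paper's identity $A^TA=I_{d+1}-ss^T$, the orthogonal projection onto $s^\perp$. The only difference is in the converse: the paper checks the isotropic second moment by the direct computation $Tq_j=\frac{d+1}{d}R^2q_j$ together with a Gram-rank spanning argument, whereas your appeal to ``symmetry'' tacitly uses that the simplex's symmetry group acts irreducibly on $V$ (Schur's lemma), with the constant $R^2/d$ then fixed by taking the trace.
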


\begin{proof}
The first- and second-moment identities
\qref{eq:design-first-moment} and
\qref{eq:design-second-moment}, after normalizing the total mass to one, are
equivalently
\begin{equation}\label{eq:normalized-two-design-moments}
        \sum_{i=1}^N p_iq_i=0,
        \qquad
        \sum_{i=1}^N p_iq_iq_i^T
        =\frac{R^2}{d}I_V.
\end{equation}
Here $q_iq_i^T$ denotes the rank-one operator
\[
        u\longmapsto (q_i\cdot u)q_i.
\]
Choose an orthonormal basis of $V$ and let $A$ be the $d\times N$ matrix
whose $i$th column is
\begin{equation}\label{eq:minimal-two-design-column}
        a_i:=\frac{\sqrt{dp_i}}{R}\,q_i.
\end{equation}
The second identity in \qref{eq:normalized-two-design-moments} gives
\begin{equation}\label{eq:minimal-two-design-AAstar}
        AA^T=I_d.
\end{equation}
Hence $\operatorname{rank}A=d$.  On the other hand, if
\[
        s:=(\sqrt{p_1},\ldots,\sqrt{p_N})^T,
\]
then the first identity in \qref{eq:normalized-two-design-moments} gives
\[
        As=0.
\]
Since $s\ne0$, the kernel of $A$ is nontrivial, and therefore
$N\ge d+1$, proving \qref{eq:minimal-two-design-node-bound}.

Suppose now that $N=d+1$.  Then $\ker A$ is one-dimensional and is spanned
by $s$.  Since $\|s\|^2=\sum_i p_i=1$ and
\qref{eq:minimal-two-design-AAstar} says that the rows of $A$ are
orthonormal, $A^TA$ is the orthogonal projection onto $s^\perp$:
\begin{equation}\label{eq:minimal-two-design-projection}
        A^TA=I_{d+1}-ss^T.
\end{equation}
Comparing the diagonal entries of
\qref{eq:minimal-two-design-projection} gives
\[
        dp_i=1-p_i,
\]
hence $p_i=1/(d+1)$.  Comparing the off-diagonal entries gives
\[
        \frac{d\sqrt{p_ip_j}}{R^2}\,q_i\cdot q_j
        =-\sqrt{p_ip_j},
\]
and therefore $q_i\cdot q_j=-R^2/d$ for $i\ne j$.  This is precisely
\qref{eq:minimal-two-design-equality}.

It remains to verify the converse implicit in the word ``precisely'' in the
statement.  Suppose, therefore, that $q_1,\ldots,q_{d+1}\in S_R(V)$ satisfy
\[
        q_i\cdot q_j=-\frac{R^2}{d}
        \qquad (i\ne j),
\]
and give them the equal weights
\[
        p_i=\frac1{d+1}.
\]
These are the vertices of a regular $d$-simplex.

First,
\[
\begin{aligned}
        \left|\sum_{i=1}^{d+1}q_i\right|^2
        &=
        \sum_{i=1}^{d+1}|q_i|^2
        +
        2\sum_{1\le i<j\le d+1}q_i\cdot q_j\\
        &=
        (d+1)R^2
        -
        2\binom{d+1}{2}\frac{R^2}{d}\\
        &=0.
\end{aligned}
\]
Hence
\begin{equation}\label{eq:regular-simplex-first-moment}
        \sum_{i=1}^{d+1}p_iq_i=0.
\end{equation}

Next put
\[
        T:=\sum_{i=1}^{d+1}q_iq_i^T.
\]
Using the preceding centering identity, for every vertex $q_j$ we obtain
\[
\begin{aligned}
        Tq_j
        &=
        \sum_{i=1}^{d+1}q_i(q_i\cdot q_j)\\
        &=
        R^2q_j
        -
        \frac{R^2}{d}
        \sum_{i\ne j}q_i\\
        &=
        R^2q_j+\frac{R^2}{d}q_j\\
        &=
        \frac{d+1}{d}R^2q_j.
\end{aligned}
\]
To see that the vertices span $V$, consider their $(d+1)\times(d+1)$ Gram
matrix
\[
        G=(q_i\cdot q_j)_{i,j=1}^{d+1}.
\]
Using
\[
        |q_i|^2=R^2,
        \qquad
        q_i\cdot q_j=-\frac{R^2}{d}
        \quad (i\ne j),
\]
we may write
\[
        G
        =
        \frac{R^2}{d}
        \left(
        (d+1)I_{d+1}
        -
        \mathbf 1\mathbf 1^T
        \right),
\]
where
\[
        \mathbf 1=(1,\ldots,1)^T.
\]
Now $\mathbf 1\mathbf 1^T$ has eigenvalue $d+1$ on the line spanned by
$\mathbf 1$ and eigenvalue $0$ on its orthogonal complement
$\mathbf 1^\perp$.  Hence $G$ has eigenvalue $0$ on
$\operatorname{span}\{\mathbf 1\}$ and eigenvalue
\[
        \frac{d+1}{d}R^2
\]
on the $d$-dimensional space $\mathbf 1^\perp$.  Therefore
\[
        \operatorname{rank}G=d.
\]
Since the rank of a Gram matrix is the dimension of the span of the vectors
from which it is formed, the vertices $q_1,\ldots,q_{d+1}$ span $V$.
Therefore
\[
        T=\frac{d+1}{d}R^2 I_V,
\]
and hence
\begin{equation}\label{eq:regular-simplex-second-moment}
        \sum_{i=1}^{d+1}p_iq_iq_i^T
        =
        \frac{R^2}{d}I_V.
\end{equation}

Equations
\qref{eq:regular-simplex-first-moment} and
\qref{eq:regular-simplex-second-moment} reproduce the spherical averages of
all linear and quadratic polynomials, and constants are reproduced
automatically because the weights sum to one.  Thus the equal-weight vertices
of a regular $d$-simplex form a weighted spherical $2$-design.
\end{proof}

In particular, a positive weighted $2$-design with three points on a circle
is necessarily an equal-weight equilateral triangle, and a positive weighted
$2$-design with four points on a two-sphere is necessarily an equal-weight
regular tetrahedron.  Thus the triangle and the tetrahedron used in the
quartic constructions are not merely convenient symmetric examples: they
are the unique minimal positive weighted $2$-design blocks in dimensions
$d=2$ and $d=3$.

On the circle the corresponding minimality statement holds for every
strength $k$.

\begin{theorem}[Minimal positive weighted designs on the circle]
\label{thm:minimal-circle-weighted-k-design}
Let
\[
        q_1,\ldots,q_N\in S_R(V),
        \qquad \dim V=2,
\]
with positive normalized weights $p_1,\ldots,p_N$, form a weighted spherical
$k$-design.  Then
\begin{equation}\label{eq:minimal-circle-node-bound}
        N\ge k+1.
\end{equation}
If $N=k+1$, then all weights are equal and, after an orthogonal
transformation of the circle, the support is the regular $(k+1)$-gon.
\end{theorem}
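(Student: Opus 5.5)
On the circle it is convenient to identify $V$ with $\mathbb C$, normalize $R=1$, and write $q_j=e^{i\theta_j}$. The plan is to translate the design condition into Fourier moments and then argue with a single well-chosen trigonometric polynomial.

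First, the real polynomials of degree at most $k$ in $(x,y)$, restricted to the unit circle, span exactly the trigonometric polynomials of degree at most $k$. Indeed $z^\ell$ and $\bar z^\ell$ with $\ell\le k$ are such polynomials. Conversely, every monomial $x^ay^b$ with $a+b\le k$ expands in $e^{i\ell\theta}$ with $|\ell|\le k$. The uniform average of $e^{i\ell\theta}$ vanishes for $\ell\ne0$. Hence the weighted $k$-design condition is equivalent to the moment equations
\[
        \sum_{j=1}^N p_je^{i\ell\theta_j}=0,\qquad \ell=1,\ldots,k .
\]
Equivalently, $\sum_j p_jT(\theta_j)=\frac1{2\pi}\int_0^{2\pi}T$ for every real trigonometric polynomial $T$ of degree at most $k$.

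For the bound $N\ge k+1$, suppose instead $N\le k$. Since the points are distinct and $N$ is finite, I would take the nonnegative trigonometric polynomial
\[
        T(\theta)=\prod_{j=1}^{N}\bigl(1-\cos(\theta-\theta_j)\bigr),
\]
which has degree $N\le k$. It vanishes at every node, so the weighted sum equals $0$. Its uniform average is strictly positive, because $T\ge0$ and $T$ is not identically zero. This contradicts the design identity, which proves $N\ge k+1$.

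For the equality case $N=k+1$ I would work directly with the moment equations. Consider the $(k+1)\times(k+1)$ Vandermonde matrix $W=(w_j^\ell)_{\ell=0,\ldots,k;\ j=1,\ldots,N}$, where $w_j=e^{i\theta_j}$. It is invertible because the nodes are distinct. The conditions say $Wp=(1,0,\ldots,0)^T$, so $p$ is determined by the nodes.

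To extract the geometry, take $A(z)=\prod_j(z-w_j)=z^{k+1}+c_kz^k+\cdots+c_0$. Multiplying by $\bar w_j^{\,m}$ and summing with weights, using the vanishing of the moments of orders $1,\ldots,k$ together with their conjugates, should yield relations forcing $c_1=\cdots=c_k=0$. The cleanest route I see is the following. For each $m$ with $1\le m\le k$, the polynomial $z^{-m}A(z)$ evaluated at the nodes gives
\[
        0=\sum_j p_j\,w_j^{-m}A(w_j)=\sum_{r}c_r\sum_j p_jw_j^{r-m}\qquad(c_{k+1}=1).
\]
Here the exponents $r-m$ range over $[-k,1]$. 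All these moments vanish except the one with $r-m=0$, which contributes $c_m$, and the one with $r-m=1$, which involves the moment of order $1$ and is also zero. The only exception is $m=k$ with $r=k+1$, and that term again involves an order-one moment, which vanishes. This gives $c_m=0$ for $1\le m\le k$. Therefore $A(z)=z^{k+1}+c_0$, with $|c_0|=1$, and the nodes are the $(k+1)$-st roots of $-c_0$, that is, a rotated regular $(k+1)$-gon.

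Finally, for the regular polygon the matrix $W$ is a discrete Fourier matrix, and the uniform weights $p_j=1/(k+1)$ satisfy $Wp=e_0$. By the invertibility of $W$ these are the only weights, so the weights are equal.

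The main obstacle I expect is the bookkeeping in the coefficient-extraction step. One must check that every exponent $r-m$ appearing there lies in $[-k,k]$ and that only the intended term survives. If this indexing fails, the fallback is a positive-polynomial argument in the spirit of the first bound. One would use the Lagrange-type polynomials $T_i(\theta)=\prod_{j\ne i}\bigl(1-\cos(\theta-\theta_j)\bigr)$, of degree $k$, to get $p_iT_i(\theta_i)=\operatorname{avg}(T_i)$. One would then compare these identities to force the nodes to be equally spaced.
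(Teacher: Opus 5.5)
Your proof is correct, and it takes a different route from the paper's.

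First, a correction to the step you flagged. The exponents $r-m$ do not range over $[-k,1]$: for instance $m=1$, $r=k+1$ gives $r-m=k$. The slip is harmless. Since $0\le r\le k+1$ and $1\le m\le k$, you have $-k\le r-m\le k$. Every moment $\sum_j p_j w_j^{s}$ with $1\le |s|\le k$ vanishes by the design equations and their complex conjugates, using $|w_j|=1$ and $p_j$ real. So only the term $r=m$ survives, and it contributes $c_m\sum_j p_j=c_m$. This gives $c_m=0$ for $1\le m\le k$ directly, and the fallback is not needed.

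Comparison with the paper:
\begin{itemize}
\item The paper packages all moment equations into one identity, $WW^*=I_{k+1}$, for the weighted Vandermonde matrix $W_{rj}=\sqrt{p_j}\,z_j^r$. The bound $N\ge k+1$ is then a rank statement. When $N=k+1$, $W$ is square and hence unitary. Column norms give $(k+1)p_j=1$, and column orthogonality gives $\sum_{r=0}^k(z_i\bar z_j)^r=0$. So equal weights and equal spacing come out in one stroke.
\item You get the bound from the nonnegative annihilating polynomial $\prod_j\bigl(1-\cos(\theta-\theta_j)\bigr)$, that is, $\prod_j(R^2-q_j\cdot y)$. This Delsarte-style positivity argument uses no complex structure. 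It gives $N\ge k+1$ on spheres of every dimension, although off the circle it is generally weaker than the sharp bounds.
\item For the equality case you use a Prony-type argument. It identifies the node polynomial explicitly as $z^{k+1}+c_0$ with $|c_0|=1$. You then recover the weights from uniqueness of the square Vandermonde system plus a direct check on the regular polygon.
\end{itemize}
The paper's route is shorter. Yours is more transparent about why the nodes must be the roots of $z^{k+1}+c_0$.
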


\begin{proof}
Identify $V$ with $\mathbb C$ and write
\[
        q_j=Rz_j,
        \qquad |z_j|=1.
\]
For every integer $\ell\ge1$, the normalized spherical average of $z^\ell$
on the unit circle is zero:
\[
        \int_{S^1} z^\ell\,d\nu
        =
        \frac1{2\pi}\int_0^{2\pi}e^{i\ell\theta}\,d\theta
        =0.
\]
Applying the design identity
\qref{eq:weighted-k-design-definition} to the real and imaginary parts of
$z^\ell$, for $\ell=1,\ldots,k$, therefore gives
\begin{equation}\label{eq:circle-moments-section3}
        \sum_{j=1}^N p_jz_j^\ell=0,
        \qquad
        \ell=1,\ldots,k.
\end{equation}
The equations for negative powers are the complex conjugates of
\qref{eq:circle-moments-section3}.

Let $W$ be the $(k+1)\times N$ matrix
\begin{equation}\label{eq:circle-vandermonde-matrix}
        W_{rj}:=\sqrt{p_j}\,z_j^r,
        \qquad
        r=0,\ldots,k.
\end{equation}
Using \qref{eq:circle-moments-section3}, its Gram matrix is
\begin{equation}\label{eq:circle-vandermonde-unitarity}
        WW^*=I_{k+1},
\end{equation}
because the $(r,s)$ entry is
\[
        \sum_{j=1}^N p_jz_j^{\,r-s}
        =\delta_{rs}.
\]
Hence $\operatorname{rank}W=k+1$, which proves
\qref{eq:minimal-circle-node-bound}.

If $N=k+1$, the matrix $W$ is square and unitary.  The norm of its $j$th
column gives
\[
        (k+1)p_j=1,
\]
so all weights are equal.  Orthogonality of two distinct columns gives
\[
        0
        =\sum_{r=0}^k(z_i\overline{z_j})^r.
\]
Therefore $z_i\overline{z_j}$ is a nontrivial $(k+1)$st root of unity.  Since
there are exactly $k+1$ distinct nodes, they form one rotated copy of the
regular $(k+1)$-gon.
\end{proof}

Theorem~\ref{thm:minimal-circle-weighted-k-design} explains why
$P_{k+1}$ is the canonical minimal block in
Theorem~\ref{thm:planar-polygonal-families}.  Minimality, however, should
not be confused with indecomposability.  Moreover, an indecomposable
weighted design need not have the vertex set of
a regular polygon as its support.  The following elementary example makes
this distinction explicit.

\begin{proposition}[An indecomposable weighted $2$-design not supported on a regular polygon]
\label{prop:nonregular-indecomposable-circle-design}
On the unit circle take four points at the angles
\begin{equation}\label{eq:nonregular-four-node-angles}
        \frac{\pi}{6},\quad -\frac{\pi}{6},
        \quad \pi+\beta,\quad \pi-\beta,
        \qquad
        \cos\beta=\frac1{\sqrt3},
\end{equation}
and give them respectively the normalized weights
\begin{equation}\label{eq:nonregular-four-node-weights}
        \frac15,\quad\frac15,\quad\frac3{10},\quad\frac3{10}.
\end{equation}
Then this weighted set is a spherical $2$-design, is not a regular
quadrilateral, and is $2$-indecomposable in the sense of
Definition~\ref{def:k-indecomposable}.
\end{proposition}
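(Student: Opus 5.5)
The plan is to reduce everything on the circle to Fourier moments, exactly as in the proof of Theorem~\ref{thm:minimal-circle-weighted-k-design}. Identify $V$ with $\mathbb C$ and write the nodes as
\[
        z_1=e^{i\pi/6},\quad z_2=e^{-i\pi/6},\quad z_3=-e^{i\beta},\quad z_4=-e^{-i\beta},
\]
with weights $p_1=p_2=\tfrac15$ and $p_3=p_4=\tfrac3{10}$. These weights are positive and sum to $1$, so constants are reproduced automatically.

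On the circle, every real polynomial of degree at most $2$ restricts to a real combination of $1$, $\operatorname{Re}z^\ell$ and $\operatorname{Im}z^\ell$ for $\ell=1,2$. Hence the $2$-design property is equivalent to
\[
        \sum_j p_jz_j=0,
        \qquad
        \sum_j p_jz_j^2=0.
\]
The weights are symmetric under complex conjugation, which swaps $z_1\leftrightarrow z_2$ and $z_3\leftrightarrow z_4$. Both sums are therefore real, and it suffices to check the real parts.

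For $\ell=1$ the real part is
\[
        \tfrac25\cos\tfrac\pi6-\tfrac35\cos\beta
        =\tfrac{\sqrt3}{5}-\tfrac{3}{5\sqrt3}=0.
\]
For $\ell=2$, using $\cos2\beta=2\cos^2\beta-1=-\tfrac13$, the real part is
\[
        \tfrac25\cos\tfrac\pi3+\tfrac35\cos2\beta
        =\tfrac15-\tfrac15=0.
\]
This establishes the design property, and it is the only genuinely computational step.

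The support is not a regular quadrilateral. The vertices of a square are consecutive at angular distance $\pi/2$, whereas here $z_1,z_2$ are adjacent nodes at angular distance $\pi/3$. Alternatively, the weights are not all equal, whereas a regular polygon is not required anyway; the angle gap already suffices.

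For $2$-indecomposability, suppose the support were partitioned as $X_1\sqcup X_2$ with both blocks nonempty. Suppose also that, after one positive rescaling of the inherited weights on each block, each block were a positive weighted spherical $2$-design on the same circle. Normalizing each block's weights to total mass one and applying Theorem~\ref{thm:minimal-circle-weighted-k-design} with $k=2$, each block would need at least $3$ nodes. The two blocks together would then have at least $6>4$ nodes, a contradiction.

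No real obstacle is expected. The only point needing care is the justification that degree-$\le2$ polynomials on the circle are spanned by the listed Fourier modes. This holds because $x^2+y^2=1$ on the circle, so $x^2-y^2$ and $2xy$ together with constants account for all quadratics.
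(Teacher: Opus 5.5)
Your proposal is correct and follows the paper's proof essentially step for step: you reduce to the moments of $z$ and $z^2$, use conjugation symmetry to kill the imaginary parts, check the two real parts via $\cos2\beta=-\tfrac13$, and get indecomposability from the three-node lower bound of Theorem~\ref{thm:minimal-circle-weighted-k-design}. Your angle-gap argument (a gap of $\pi/3$, whereas a square only has angular separations $\pi/2$ and $\pi$) simply makes explicit what the paper calls ``plain''.
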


\begin{proof}
Writing the four nodes as $z_j\in S^1$, symmetry about the real axis makes
the imaginary parts of the first two moments vanish.  The real parts are
\begin{align}
        \sum_jp_jz_j
        &=
        2\frac15\cos\frac{\pi}{6}
        -2\frac3{10}\cos\beta
        =0,
        \label{eq:nonregular-four-node-first-moment}\\
        \sum_jp_jz_j^2
        &=
        2\frac15\cos\frac{\pi}{3}
        +2\frac3{10}\cos2\beta
        =0,
        \label{eq:nonregular-four-node-second-moment}
\end{align}
where $\cos2\beta=-1/3$. Hence the first and second nonconstant Fourier moments vanish.  To conclude
that the set is a weighted spherical $2$-design, note that on the unit circle
every real polynomial of degree at most $2$ restricts to a real linear
combination of
\[
        1,\qquad
        \Re z,\qquad
        \Im z,\qquad
        \Re z^2,\qquad
        \Im z^2.
\]
The constant function is reproduced correctly because the weights sum to
one.  Equations
\qref{eq:nonregular-four-node-first-moment} and
\qref{eq:nonregular-four-node-second-moment} say that the weighted complex
averages of $z$ and $z^2$ vanish; hence the weighted averages of their real
and imaginary parts vanish as well.  These are exactly the corresponding
spherical averages on $S^1$.  Therefore the weighted design identity holds
for every real polynomial of degree at most $2$, and the four-node set is a
weighted spherical $2$-design. It is
plainly not a regular quadrilateral.  Finally,
Theorem~\ref{thm:minimal-circle-weighted-k-design} with $k=2$ shows that
every nonempty positive weighted $2$-design on the circle contains at least
three nodes.  A four-node design therefore cannot split into two nonempty
$2$-designs.  It is $2$-indecomposable.
\end{proof}

Notice that indecomposability depends on the strength $k$.  The same support
may be decomposable at one strength and indecomposable at a higher one.
A particularly transparent example is the regular hexagon $P_6$ on the
circle.  With equal weights, $P_6$ is a spherical $5$-design.  On the other
hand, as a spherical $2$-design it decomposes as the disjoint union of the
two alternating equilateral triangles,
\[
        P_6=P_3\sqcup e^{i\pi/3}P_3,
\]
and each triangle is itself a spherical $2$-design.

At strength $5$ the situation is different.  By
Theorem~\ref{thm:minimal-circle-weighted-k-design}, every nonempty positive
weighted spherical $5$-design on the circle has at least six nodes.  If the
six-node design $P_6$ could be decomposed into two nonempty positive weighted
$5$-designs, each block would therefore have to contain at least six nodes,
which is impossible.  Hence $P_6$ is $5$-indecomposable.

The preceding four-node example shows, in addition, that the class of
$k$-indecomposable weighted designs relevant to
Corollary~\ref{cor:relative-rotations-designs} is not exhausted by designs
supported on regular polygons. The same warning is important on
$S^2$.  The Platonic solids give especially symmetric examples, but they do
not exhaust spherical designs.  Delsarte, Goethals and Seidel proved that an unweighted
spherical $5$-design on $S^2$ has at least $12$ nodes, with equality forcing
the regular icosahedron \cite{DelsarteGoethalsSeidel1977}.  Reznick
constructed spherical $5$-designs on $S^2$ with $16$, $18$, $20$, $22$,
$24$, and every number of nodes at least $26$ \cite{Reznick1995}.  In
particular, his examples with $16$, $18$, $20$, and $22$ nodes cannot split,
with their inherited equal weights, into two nonempty spherical
$5$-designs, since each block would require at least $12$ nodes.  They are
therefore $5$-indecomposable in the sense of
Definition~\ref{def:k-indecomposable}.  We make no claim here of a
classification of $k$-indecomposable designs on $S^2$.  The broader
minimal-node problem for positive weighted cubature on $S^2$ remains an
active subject; for recent results see \cite{ChengYu2026}.

There is one further point which is essential for the mechanical
interpretation.  Indecomposable support does not imply that the masses on
that support are arbitrary.  In fact, the design equations admit a useful
geometric interpretation in terms of weighted barycenters.  We introduce
this interpretation from first principles.

Let
\[
        a_1,\ldots,a_N\in\mathbb R^L
\]
be finitely many points.  Numbers
\[
        p_1,\ldots,p_N\in\mathbb R,
        \qquad
        p_1+\cdots+p_N=1,
\]
are called \emph{barycentric coordinates} of a point
$a\in\mathbb R^L$ with respect to $a_1,\ldots,a_N$ if
\begin{equation}\label{eq:barycentric-coordinate-definition}
        a=\sum_{i=1}^N p_i a_i.
\end{equation}
Thus $a$ is simply the weighted average of the points $a_i$.

If all $p_i\ge0$, the weighted average in
\qref{eq:barycentric-coordinate-definition} is called a
\emph{convex combination}.  The set of all convex combinations of
$a_1,\ldots,a_N$ is called their \emph{convex hull}.  If all $p_i>0$, we
shall say that
\qref{eq:barycentric-coordinate-definition} gives \emph{positive
barycentric coordinates}.  The points $a_i$ need not be independent in any
sense, and consequently the barycentric coordinates of a point need not be
unique.

We now apply this elementary idea to spherical designs.  Recall first the
harmonic functions which encode their moments.  A real polynomial
\[
        H:V\longrightarrow\mathbb R
\]
is called \emph{harmonic} if
\[
        \Delta H=0,
\]
where, in any orthonormal coordinates $(x^1,\ldots,x^d)$ on $V$,
\[
        \Delta
        :=
        \frac{\partial^2}{\partial(x^1)^2}
        +\cdots+
        \frac{\partial^2}{\partial(x^d)^2}
\]
is the Euclidean Laplacian.  A \emph{spherical harmonic of degree $\ell$}
on $S_R(V)$ is the restriction to the sphere of a homogeneous harmonic
polynomial of degree $\ell$.

The restrictions to the sphere of polynomials of degree at most $k$ can be
described more explicitly in terms of spherical harmonics.  The standard
harmonic decomposition of homogeneous polynomials
\cite{DelsarteGoethalsSeidel1977,BannaiBannai2009} gives
\[
        P_r(x)
        =
        H_r(x)
        +|x|^2H_{r-2}(x)
        +|x|^4H_{r-4}(x)
        +\cdots
\]
for every homogeneous polynomial $P_r$ of degree $r$, where each $H_j$ is a
homogeneous harmonic polynomial of degree $j$.  On the sphere $S_R(V)$ the
factor $|x|^2$ is equal to the constant $R^2$.  Consequently the restriction
to $S_R(V)$ of every polynomial of degree at most $k$ is a linear combination
of the constant function and spherical harmonics of degrees $1,\ldots,k$.

Every spherical harmonic of positive degree has spherical average zero.  To
see this, let $Y$ be a spherical harmonic of degree $\ell\ge1$.  By
definition, there is a homogeneous harmonic polynomial
\[
        H_\ell:V\longrightarrow\mathbb R
\]
of degree $\ell$ whose restriction to $S_R(V)$ is $Y$.  The mean-value
property for harmonic functions says that the value of $H_\ell$ at the
centre of the sphere is equal to its average over the sphere:
\[
        H_\ell(0)
        =
        \int_{S_R(V)}H_\ell(y)\,d\nu_R(y).
\]
Since $H_\ell$ is homogeneous of positive degree,
\[
        H_\ell(0)=0.
\]
Consequently
\begin{equation}\label{eq:positive-degree-harmonic-average-zero}
        \int_{S_R(V)}Y(y)\,d\nu_R(y)=0
        \qquad
        \text{for every spherical harmonic $Y$ of degree $\ell\ge1$.}
\end{equation}

The constant spherical harmonics, corresponding to degree $\ell=0$, behave
differently: the average of a constant is of course the same constant.
Thus, after the weights have been normalized by
\[
        p_1+\cdots+p_N=1,
\]
the constant component of every polynomial is automatically reproduced
correctly.  The remaining design conditions are precisely the vanishing of
the weighted moments of the spherical harmonics of positive degrees
$1,\ldots,k$.  Explicitly, for fixed nodes $q_1,\ldots,q_N$ and normalized
weights $p_i$, the weighted set is a spherical $k$-design if and only if
\begin{equation}\label{eq:harmonic-moment-characterization}
        \sum_{i=1}^N p_iY(q_i)=0
\end{equation}
for every spherical harmonic $Y$ of degree $1,\ldots,k$.  We now package all
these scalar equations into a single vector equation.

Choose a real basis
\[
        Y_1,\ldots,Y_L
\]
of the finite-dimensional vector space formed by all nonconstant spherical
harmonics of degrees $1,\ldots,k$, restricted to $S_R(V)$.  We package their
values at one point into a single vector by defining the
\emph{harmonic lift}
\begin{equation}\label{eq:harmonic-lift-definition}
        \Phi_k:S_R(V)\longrightarrow\mathbb R^L,
        \qquad
        \Phi_k(q)
        :=
        \bigl(
        Y_1(q),\ldots,Y_L(q)
        \bigr).
\end{equation}
Thus every original point $q$ on the sphere is replaced by a point
$\Phi_k(q)$ in a finite-dimensional Euclidean space which records the
values at $q$ of all the chosen nonconstant spherical harmonics through
degree $k$.

The particular basis $Y_1,\ldots,Y_L$ is not important.  Replacing it by
another basis merely applies an invertible linear transformation to all the
vectors $\Phi_k(q)$.  In particular, whether a weighted average of these
vectors is zero does not depend on the chosen basis.

Suppose now that positive masses $m_i$ are placed at fixed points
\[
        q_1,\ldots,q_N\in S_R(V),
\]
and put
\[
        M:=\sum_{i=1}^N m_i,
        \qquad
        p_i:=\frac{m_i}{M}.
\]
Then
\[
        p_i>0,
        \qquad
        \sum_{i=1}^N p_i=1.
\]
By \qref{eq:harmonic-moment-characterization}, the weighted support is a
spherical $k$-design if and only if
\begin{equation}\label{eq:harmonic-barycenter-design-condition}
\boxed{
        \sum_{i=1}^N p_i\,\Phi_k(q_i)=0.}
\end{equation}

Equation~\qref{eq:harmonic-barycenter-design-condition} has a simple
geometric meaning.  The normalized masses $p_i$ are precisely positive
barycentric coordinates of the origin with respect to the lifted points
\[
        \Phi_k(q_1),\ldots,\Phi_k(q_N)\in\mathbb R^L.
\]
Equivalently, the origin is the weighted barycenter of these lifted points.

For $k=1$ this statement reduces to the ordinary center-of-mass condition
\qref{eq:design-first-moment}.  Indeed, the degree-one spherical harmonics
are linear functions, so the first harmonic moments contain exactly the same
information as
\[
        \sum_i p_iq_i=0.
\]
Thus for $k=1$ the origin is the weighted barycenter of the original points
$q_i$.  For $k\ge2$ the idea is the same, except that the points are first
lifted to a larger space which also records their quadratic, cubic, and
higher harmonic moments up to degree $k$.

The barycentric formulation gives an especially transparent description of
the possible masses on a fixed support.

\begin{proposition}[Barycentric description of the fixed-support weights]
\label{prop:fixed-support-weight-polytope}
Fix distinct points
\[
        X=\{q_1,\ldots,q_N\}\subset S_R(V),
\]
and let $\Phi_k$ be the harmonic lift
\qref{eq:harmonic-lift-definition}.  Then the normalized positive weights
which make $X$ a weighted spherical $k$-design are exactly the positive
barycentric coordinates of the origin with respect to the lifted points
\[
        \Phi_k(q_1),\ldots,\Phi_k(q_N).
\]

Equivalently, define the $L\times N$ moment matrix
\begin{equation}\label{eq:fixed-support-moment-matrix}
        B
        :=
        \begin{pmatrix}
        Y_1(q_1) & \cdots & Y_1(q_N)\\
        \vdots   &        & \vdots\\
        Y_L(q_1) & \cdots & Y_L(q_N)
        \end{pmatrix}.
\end{equation}
Thus the $i$th column of $B$ is precisely the lifted point
$\Phi_k(q_i)$.  The associated closed normalized weight polytope is
\begin{equation}\label{eq:fixed-support-weight-polytope}
\boxed{
        \overline{\mathcal W}_k(X)
        :=
        \left\{
        (p_1,\ldots,p_N)\in\mathbb R_{\ge0}^N:
        \sum_{i=1}^N p_i=1,
        \quad
        B
        \begin{pmatrix}
        p_1\\
        \vdots\\
        p_N
        \end{pmatrix}
        =0
        \right\}.}
\end{equation}
The normalized positive weights relevant to the mechanical problem form its
positive part
\begin{equation}\label{eq:fixed-support-positive-weight-set}
        \mathcal W_k(X)
        :=\overline{\mathcal W}_k(X)\cap\mathbb R_{>0}^N.
\end{equation}

For later dimension counting it is convenient to put the normalization
condition and the harmonic moment conditions into one matrix.  Define the
$(L+1)\times N$ matrix
\begin{equation}\label{eq:fixed-support-augmented-matrix}
        C
        :=
        \begin{pmatrix}
        1 & \cdots & 1\\
        Y_1(q_1) & \cdots & Y_1(q_N)\\
        \vdots   &        & \vdots\\
        Y_L(q_1) & \cdots & Y_L(q_N)
        \end{pmatrix}.
\end{equation}
Then the defining equations for the normalized weights are simply
\begin{equation}\label{eq:fixed-support-linear-system}
        C
        \begin{pmatrix}
        p_1\\
        \vdots\\
        p_N
        \end{pmatrix}
        =
        \begin{pmatrix}
        1\\
        0\\
        \vdots\\
        0
        \end{pmatrix}.
\end{equation}
Whenever $\mathcal W_k(X)$ is nonempty,
\begin{equation}\label{eq:fixed-support-weight-dimension}
        \dim\mathcal W_k(X)
        =
        N-\operatorname{rank}C.
\end{equation}
For unnormalized masses there is one additional overall scaling direction.
\end{proposition}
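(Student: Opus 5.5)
The plan is to use the harmonic characterization \qref{eq:harmonic-moment-characterization} and read everything off the linear system \qref{eq:fixed-support-linear-system}.

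First, the equivalence between normalized design weights and positive barycentric coordinates of the origin. By the harmonic decomposition recalled above, every polynomial of degree at most $k$ restricts on $S_R(V)$ to a constant plus a linear combination of $Y_1,\ldots,Y_L$. By \qref{eq:positive-degree-harmonic-average-zero}, each $Y_a$ has spherical average zero. Hence, for normalized weights with $\sum p_i=1$:
\begin{itemize}
\item the design identity \qref{eq:weighted-k-design-definition} holds for constants automatically;
\item for all degree-$\le k$ polynomials it holds if and only if $\sum_i p_iY_a(q_i)=0$ for $a=1,\ldots,L$.
\end{itemize}
These $L$ equations are the components of $\sum_ip_i\Phi_k(q_i)=0$, that is, of $Bp=0$. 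With positivity this is exactly the statement that the $p_i$ are positive barycentric coordinates of $0$ with respect to the points $\Phi_k(q_i)$.

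Consequently $\mathcal W_k(X)$ is the set of positive solutions of $Bp=0$, $\sum p_i=1$. The closed set $\overline{\mathcal W}_k(X)$ is cut out of the simplex by linear equations, so it is a polytope. Stacking the normalization row on top of $B$ gives the augmented matrix $C$ and the system \qref{eq:fixed-support-linear-system}. The description is basis-independent, since changing the basis $Y_a$ multiplies $B$ on the left by an invertible matrix. This leaves $\ker B$, the row space of $C$, and $\operatorname{rank}C$ unchanged.

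For the dimension formula \qref{eq:fixed-support-weight-dimension}, let $\mathcal A$ be the solution set of $Cp=e_1$ in $\mathbb R^N$. When it is nonempty it is an affine subspace. Its direction space is $\ker C$, so $\dim\mathcal A=N-\operatorname{rank}C$. By definition $\mathcal W_k(X)=\mathcal A\cap\mathbb R^N_{>0}$. The intersection of $\mathcal A$ with the open orthant is a relatively open subset of $\mathcal A$. If it is nonempty, it contains a neighborhood of a point within $\mathcal A$, so it has full dimension $\dim\mathcal A$. This is the only point needing care, and it is settled by the openness of $\mathbb R^N_{>0}$.

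For unnormalized masses, drop the row of ones. The admissible masses then form the open cone $\{m>0: Bm=0\}$. Its dimension is $N-\operatorname{rank}B$. This equals $N-\operatorname{rank}C+1$, because the row of ones is not in the row space of $B$. Indeed, any positive solution $p$ satisfies $Bp=0$ but $\mathbf 1\cdot p=1\ne0$, so $\operatorname{rank}C=\operatorname{rank}B+1$. This accounts for the one extra scaling direction.
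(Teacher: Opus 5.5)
Your proposal is correct and follows essentially the same route as the paper. The harmonic-moment characterization turns the design condition into $Bp=0$ with $\sum_i p_i=1$, and the dimension comes from the affine solution set of $Cp=e_1$ (direction space $\ker C$) meeting the open orthant. Your argument that $\operatorname{rank}C=\operatorname{rank}B+1$ whenever a positive solution exists is a useful addition, since the paper only asserts the extra scaling direction for unnormalized masses.
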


\begin{proof}
The first statement is exactly
\qref{eq:harmonic-barycenter-design-condition} together with the definition
\qref{eq:barycentric-coordinate-definition} of barycentric coordinates.

Writing the vector equation
\[
        \sum_{i=1}^N p_i\Phi_k(q_i)=0
\]
in the chosen harmonic basis gives
\[
        B
        \begin{pmatrix}
        p_1\\
        \vdots\\
        p_N
        \end{pmatrix}
        =0,
\]
because the columns of $B$ are the vectors $\Phi_k(q_i)$.  Adding the normalization
\[
        p_1+\cdots+p_N=1
\]
gives the linear equalities appearing in
\qref{eq:fixed-support-weight-polytope}, or equivalently the single linear
system \qref{eq:fixed-support-linear-system}.  Intersecting this affine
solution set with $p_i\ge0$ gives the closed set
$\overline{\mathcal W}_k(X)$.  It is bounded because
$p_i\ge0$ and $\sum_i p_i=1$, and it is cut out by finitely many linear
inequalities and equalities.  Hence it is a convex polytope.  Its positive
part, where all $p_i>0$, is precisely $\mathcal W_k(X)$ in
\qref{eq:fixed-support-positive-weight-set}.

Finally, assume that $\mathcal W_k(X)$ is nonempty, as in the statement of
the proposition, and choose
\[
        p^{(0)}\in\mathcal W_k(X).
\]
In particular, $p^{(0)}$ is a positive solution of
\qref{eq:fixed-support-linear-system}.  Every other solution of that linear
system has the form
\[
        p=p^{(0)}+u,
        \qquad
        Cu=0.
\]
Thus the free infinitesimal variations of the weights are precisely the
vectors in the kernel of $C$.  By the rank-nullity theorem,
\[
        \dim\ker C=N-\operatorname{rank}C.
\]
The positivity conditions $p_i>0$ are strict inequalities and hence, near
any positive solution, do not impose further linear equations.  Therefore
the positive solution set has dimension
\[
        N-\operatorname{rank}C,
\]
which proves \qref{eq:fixed-support-weight-dimension}.
\end{proof}

Proposition~\ref{prop:fixed-support-weight-polytope} gives a precise
geometric answer to the mass question for a fixed support.  The normalized
masses are not arbitrary: they must be positive barycentric coordinates of
the origin relative to the lifted configuration
\[
        \Phi_k(q_1),\ldots,\Phi_k(q_N).
\]
In particular, for $k\ge1$ the individual masses cannot vary arbitrarily and
independently while the support is held fixed.  Indeed, already the
first-moment condition \qref{eq:design-first-moment} requires
\[
        \sum_i m_iq_i=0.
\]
If this identity were to hold for every independent choice of positive
$m_i$ on one fixed nonempty spherical support, varying one mass while keeping
the others fixed would force the corresponding $q_i$ to vanish, which is
impossible on $S_R(V)$.

The same barycentric picture also makes the blockwise mass freedom
transparent.  Suppose that the fixed support is partitioned into $s$
pairwise disjoint weighted $k$-design blocks,
\[
        X=X_1\sqcup\cdots\sqcup X_s.
\]
Normalize the weights inside each block so that their sum is one.  For every
block separately, the origin is then the weighted barycenter of its lifted
points.  If the weights in the $a$th block are all multiplied by a positive
number $\beta_a$, where
\[
        \beta_1+\cdots+\beta_s=1,
\]
the barycentric equations of the blocks add to give the barycentric equation
for their union.

Because the supports of the blocks are disjoint, changing one $\beta_a$
changes exactly the weights carried by the points of $X_a$ and cannot be
absorbed into a rescaling of another block.  Thus the parameters
\[
        (\beta_1,\ldots,\beta_s),
        \qquad
        \beta_a>0,
        \qquad
        \sum_{a=1}^s\beta_a=1,
\]
give an $(s-1)$-dimensional family of distinct normalized weight vectors.
Equivalently, one obtains $s-1$ independent normalized block-mass
parameters. This is precisely the blockwise rescaling freedom used in
Lemma~\ref{lem:weighted-design-closure} and
Corollary~\ref{cor:relative-rotations-designs}; see also
\cite{MohammadpourWaldron2019}.

Additional weight freedom is possible exactly when the origin admits further
positive barycentric representations with respect to the full collection of
lifted points.  In the matrix language, this means that the kernel of $C$
contains directions beyond those generated by independent rescaling of the
individual design blocks.

It is important to keep this fixed-support statement separate from
Theorem~\ref{thm:open-mass-persistence}.  In
Proposition~\ref{prop:fixed-support-weight-polytope} the points $q_i$ are
fixed and the masses are constrained to lie in
$\mathcal W_k(X)$.  In Theorem~\ref{thm:open-mass-persistence}, by contrast,
every individual mass is allowed to vary in an open neighborhood of the
equal-mass vector because the point positions are allowed to move
simultaneously.  The Fourier and implicit-function argument proving that
stronger moving-support statement will be given in Section~5.

This completes Section~\ref{sec:spherical-design-mechanism}.
Theorem~\ref{thm:even-power-design} proves that every weighted spherical
$k$-design is central for the even exponent $\sigma=2k$;
Corollary~\ref{cor:relative-rotations-designs} shows that such designs may be
assembled from independently rotated blocks; and
Proposition~\ref{prop:fixed-support-weight-polytope} gives a barycentric and
linear-algebraic description of the admissible masses on a fixed support.
In the next section the weights of the blocks are held fixed, and we turn to
the genuinely geometric question left open by
Corollary~\ref{cor:relative-rotations-designs}: how many of the independent
rotation parameters survive after quotienting by similarities?

\section{Rotated design blocks and their moduli}
\label{sec:rotated-design-moduli}

Section~\ref{sec:spherical-design-mechanism} established that independently
rotated spherical $k$-designs on the same sphere remain central for the
positive even exponent $\sigma=2k$.  That statement concerns centrality only.
It does not yet answer a different geometric question: when do different
choices of the relative rotations give genuinely different configurations
up to similarity?

This distinction is essential.  The group $SO(d)$ has dimension
$d(d-1)/2$, but the mere presence of $d(d-1)/2$ rotation parameters does not
prove that the corresponding configurations have that many moduli.  Several
values of the parameters could in principle describe the same geometric
configuration.  Moreover, after the blocks have been united, one must rule
out the possibility that the same finite point set admits a different
partition into copies of the original blocks.

The purpose of the present section is to settle this quotient problem.  The
proof of Theorem~\ref{thm:generic-rotated-design-moduli} begins in
Subsection~\ref{subsec:generic-block-recovery}.  There we prove that, outside
a finite union of proper exceptional sets in the rotation parameters, the
individual blocks can be recovered from the distances in their union.  The
proof of Theorem~\ref{thm:generic-rotated-design-moduli} is then completed in
Subsection~\ref{subsec:generic-moduli-proof}, where we show that two generic
unions can be similar only through a common orthogonal transformation,
finitely many internal symmetries of the blocks, and finitely many
permutations of mutually congruent blocks.  This leaves
\[
        (s-1)\frac{d(d-1)}2
\]
genuine local moduli parameters.  Later in the section we apply this theorem
to regular polygons and Platonic solids, prove the density statement of
Corollary~\ref{cor:dense-central-configurations}, and complete the proof of
Theorem~\ref{thm:planar-polygonal-families} by exhibiting an explicit shape
invariant.

\subsection{Generic separation and recovery of the blocks}
\label{subsec:generic-block-recovery}

We begin with a purely elementary observation about finite point sets on a
sphere.  Let
\[
        X_1,\ldots,X_s\subset S_R(V),
        \qquad s\ge2,
\]
be the fixed weighted spherical $k$-designs appearing in
Theorem~\ref{thm:generic-rotated-design-moduli}, where $k\ge2$ and
$\dim V=d$.  We keep all point weights fixed throughout the present section.
For a rotation tuple
\[
        \mathbf Q=(Q_1,\ldots,Q_s)\in SO(V)^s
\]
we write
\begin{equation}\label{eq:rotated-union-section4}
        \mathcal X_{\mathbf Q}
        :=Q_1X_1\cup\cdots\cup Q_sX_s.
\end{equation}

Before studying the relative rotations, we record two simple consequences of
the design property.  First, each $X_a$ is centered.  Second, each $X_a$
spans the whole space $V$.  To see the latter statement, suppose that
$u\in V$ were orthogonal to every point of $X_a$.  Since $k\ge2$, the
second-moment identity \qref{eq:design-second-moment} gives
\[
        0
        =\frac1{M_a}\sum_{x\in X_a}m_x(u\cdot x)^2
        =\frac{R^2}{d}|u|^2,
\]
where $M_a$ is the total mass of the block.  Hence $u=0$.  Thus
\begin{equation}\label{eq:each-block-spans-V}
        \Span X_a=V,
        \qquad a=1,\ldots,s.
\end{equation}
This elementary fact will later imply that every orthogonal symmetry group
of a block is finite.

We now collect all distances that occur \emph{inside} the fixed blocks.  Put
\begin{equation}\label{eq:internal-distance-set}
        \mathcal D
        :=\bigcup_{a=1}^s
        \bigl\{|x-x'|:
        x,x'\in X_a,\ x\ne x'\bigr\}.
\end{equation}
Because the blocks are finite, $\mathcal D$ is a finite subset of
$(0,2R]$.

For the rotated union \qref{eq:rotated-union-section4}, a distance between
points belonging to two different blocks has the form
\[
        |Q_ax-Q_by|,
        \qquad
        a\ne b,
        \quad x\in X_a,
        \quad y\in X_b.
\]
There are two kinds of relative orientations that we want to exclude.  The
first produces a collision,
\[
        |Q_ax-Q_by|=0.
\]
The second makes a cross-block distance accidentally equal to one of the
internal distances in $\mathcal D$,
\[
        |Q_ax-Q_by|=\delta,
        \qquad \delta\in\mathcal D.
\]
Both are nongeneric conditions.

To see this explicitly, fix $a\ne b$, $x\in X_a$, $y\in X_b$, and
$\delta\in\mathcal D\cup\{0\}$.  Consider the subset
\begin{equation}\label{eq:bad-cross-distance-set}
\begin{split}
        \mathcal B(a,b;x,y;\delta)
        :=\bigl\{(Q_1,\ldots,Q_s)\in SO(V)^s:\qquad\\
        |Q_ax-Q_by|=\delta\bigr\}.
\end{split}
\end{equation}
It is closed, since the distance is a continuous function of the matrices
$Q_a$.  More precisely, it is the zero set of the real-analytic function
\begin{equation}\label{eq:bad-cross-distance-analytic-function}
        f_{a,b;x,y;\delta}(Q_1,\ldots,Q_s)
        :=
        |Q_ax-Q_by|^2-\delta^2
\end{equation}
on $SO(V)^s$.  We claim that this function is not identically zero.

Indeed, after multiplying by $Q_a^{-1}$, the relevant distance is
\[
        |x-Gy|,
        \qquad
        G:=Q_a^{-1}Q_b\in SO(V).
\]
For $d\ge2$, the group $SO(V)$ acts transitively on $S_R(V)$.  We may
therefore choose $G$ with $Gy=x$, which gives distance $0$.  If
$\delta>0$, this already shows that
$f_{a,b;x,y;\delta}$ is not identically zero.  If $\delta=0$, choose instead
$G$ with $Gy=-x$, which gives distance $2R$, and again
$f_{a,b;x,y;\delta}$ is not identically zero.

We now use the standard identity theorem for real-analytic functions: a
real-analytic function on a connected real-analytic manifold which vanishes
on a nonempty open set must vanish identically.  Since $SO(d)$ is connected
for $d\ge2$, the product $SO(V)^s$ is connected.  Hence the zero set
\[
        f_{a,b;x,y;\delta}^{-1}(0)
        =
        \mathcal B(a,b;x,y;\delta)
\]
cannot contain a nonempty open subset.  Therefore every set
\qref{eq:bad-cross-distance-set} has empty interior.

There are only finitely many choices of $a,b,x,y$ and $\delta$.  Removing all
sets \qref{eq:bad-cross-distance-set} therefore leaves an open dense subset
\begin{equation}\label{eq:generic-rotation-set-U}
        \mathcal U\subset SO(V)^s
\end{equation}
with the following property: for every $\mathbf Q\in\mathcal U$,
\begin{equation}\label{eq:generic-no-cross-internal-distance}
\begin{gathered}
        Q_aX_a\cap Q_bX_b=\varnothing,
        \qquad a\ne b,\\
        |Q_ax-Q_by|\notin\mathcal D
        \quad
        \text{whenever }a\ne b,
        \ x\in X_a,
        \ y\in X_b.
\end{gathered}
\end{equation}
Thus the union is collision-free, and no distance between two different
blocks is equal to any distance occurring inside one of the blocks.

The usefulness of \qref{eq:generic-no-cross-internal-distance} is that the
blocks can now be recognized from the metric of their union.  Given
$\mathcal X_{\mathbf Q}$, form a graph $\Gamma_{\mathbf Q}$ whose vertices
are the points of $\mathcal X_{\mathbf Q}$ and whose edges join two distinct
vertices precisely when their Euclidean distance belongs to
$\mathcal D$:
\begin{equation}\label{eq:block-recovery-graph}
        p\sim q
        \quad\Longleftrightarrow\quad
        |p-q|\in\mathcal D.
\end{equation}
If $p$ and $q$ lie in the same rotated block $Q_aX_a$, then their distance
is an internal distance of $X_a$ and therefore belongs to $\mathcal D$.
Thus every rotated block is a complete subgraph.  On the other hand,
\qref{eq:generic-no-cross-internal-distance} says that no edge joins points
from two different blocks.  Consequently the connected components of
$\Gamma_{\mathbf Q}$ are exactly
\[
        Q_1X_1,\ldots,Q_sX_s.
\]
We have proved the following lemma.

\begin{lemma}[Generic recovery of the rotated blocks]
\label{lem:generic-block-recovery}
Under the assumptions of
Theorem~\ref{thm:generic-rotated-design-moduli}, there exists an open dense
subset $\mathcal U\subset SO(V)^s$ such that for every
$\mathbf Q\in\mathcal U$ the union $\mathcal X_{\mathbf Q}$ is
collision-free and its decomposition into the rotated blocks
\[
        Q_1X_1,\ldots,Q_sX_s
\]
is recovered as the connected-component decomposition of the metric graph
\qref{eq:block-recovery-graph}.  In particular, every Euclidean isometry
between two such generic unions must permute the rotated blocks.
\end{lemma}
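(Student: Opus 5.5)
The lemma summarizes the argument given immediately before it, so the plan is to assemble those pieces in order. First I fix the finite internal distance set $\mathcal D$ from \qref{eq:internal-distance-set}. For each finite choice of $a\ne b$, $x\in X_a$, $y\in X_b$ and $\delta\in\mathcal D\cup\{0\}$, I consider the bad set $\mathcal B(a,b;x,y;\delta)$ of \qref{eq:bad-cross-distance-set}. It is the zero set of the real-analytic function $f_{a,b;x,y;\delta}$ on the connected manifold $SO(V)^s$; connectedness holds because $SO(d)$ is connected for $d\ge2$. The function is not identically zero. After reducing to $G=Q_a^{-1}Q_b$, transitivity of $SO(V)$ on $S_R(V)$ lets me choose $Gy=x$, giving distance $0$, or $Gy=-x$, giving distance $2R$, and one of these differs from $\delta$. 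By the identity theorem, each bad set is closed with empty interior. A finite union of closed nowhere dense sets is closed and nowhere dense, so its complement $\mathcal U$ is open and dense. This gives \qref{eq:generic-no-cross-internal-distance}: the union has no collisions, and no cross-block distance lies in $\mathcal D$.

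Next I form the graph $\Gamma_{\mathbf Q}$ of \qref{eq:block-recovery-graph}. Each $Q_aX_a$ is a clique, because $Q_a$ is an isometry and so every internal distance lies in $\mathcal D$. By the second condition in \qref{eq:generic-no-cross-internal-distance} there are no edges between different blocks. Hence the connected components are exactly the rotated blocks. This is where I need disjointness, so that the vertex set really is the disjoint union of the blocks. I should also note that a single block cannot split into two components, since it is complete.

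Finally, I take an isometry $g$ carrying $\mathcal X_{\mathbf Q}$ onto $\mathcal X_{\mathbf Q'}$ with both tuples in $\mathcal U$. The set $\mathcal D$ depends only on the fixed data $X_1,\ldots,X_s$ and not on $\mathbf Q$, so $g$ preserves the relation $|p-q|\in\mathcal D$. It is therefore a graph isomorphism $\Gamma_{\mathbf Q}\to\Gamma_{\mathbf Q'}$ and maps components to components, i.e.\ it permutes the blocks. If similarities rather than isometries are wanted, the scale factor must first be removed. For a centered union on $S_R(V)$ this follows exactly as in the introduction: the center of mass is preserved and the radius is fixed.

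The only delicate point is the non-vanishing of $f$ in the case $\delta=0$ versus $\delta>0$, which is handled by the two choices above. Everything else is bookkeeping with finitely many conditions.
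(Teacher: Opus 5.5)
Your proposal is correct and follows the paper's own argument essentially step for step: the closed bad sets $\mathcal B(a,b;x,y;\delta)$ are zero sets of real-analytic functions, shown to be not identically zero via $Gy=x$ or $Gy=-x$; the identity theorem is applied on the connected manifold $SO(V)^s$; and the blocks are recovered as the connected components of the distance graph. Your explicit remark that $\mathcal D$ depends only on the fixed blocks, so that an isometry between two generic unions is a graph isomorphism $\Gamma_{\mathbf Q}\to\Gamma_{\mathbf Q'}$, spells out the ``in particular'' clause a little more carefully than the paper does.
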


The proof of Lemma~\ref{lem:generic-block-recovery} begins with the internal
distance set \qref{eq:internal-distance-set} and ends here.  The lemma is the
main geometric point needed to turn rotation parameters into genuine moduli.

\subsection{The generic quotient by similarities}
\label{subsec:generic-moduli-proof}

We now complete the proof of
Theorem~\ref{thm:generic-rotated-design-moduli}.  The first step is to
understand the finite symmetries of the individual blocks.

For a weighted block $X_a$, let
\begin{equation}\label{eq:block-symmetry-group}
        G_a
        :=\{B\in O(V):BX_a=X_a
        \text{ as a weighted point set}\}.
\end{equation}
The group $G_a$ is finite.  Indeed, by
\qref{eq:each-block-spans-V} we may choose $d$ linearly independent points
of $X_a$.  An orthogonal transformation is determined by the images of
these $d$ points, and there are only finitely many possible choices of their
images in the finite set $X_a$.

More generally, for two blocks $X_a$ and $X_b$ define
\begin{equation}\label{eq:block-isometry-set}
        \operatorname{Iso}(X_a,X_b)
        :=\{B\in O(V):BX_a=X_b
        \text{ as weighted point sets}\}.
\end{equation}
This set is either empty or finite, by the same argument.  Thus there are
only finitely many orthogonal identifications between any two fixed blocks.

Let now
\[
        \mathbf Q=(Q_1,\ldots,Q_s),
        \qquad
        \mathbf Q'=(Q'_1,\ldots,Q'_s)
\]
belong to the generic set $\mathcal U$ of
Lemma~\ref{lem:generic-block-recovery}, and suppose that the two weighted
configurations $\mathcal X_{\mathbf Q}$ and
$\mathcal X_{\mathbf Q'}$ are similar.  Thus there is a similarity
\begin{equation}\label{eq:generic-similarity-map}
        T(z)=b+\rho Az,
        \qquad
        b\in V,
        \quad \rho>0,
        \quad A\in O(V),
\end{equation}
which maps one weighted point set onto the other, allowing only permutations
that preserve the attached masses.

Both unions have center of mass equal to the origin.  Indeed, every weighted
spherical $k$-design is centered by \qref{eq:design-first-moment}, and a
finite union of centered blocks is again centered.  Taking centers of mass
in \qref{eq:generic-similarity-map} therefore gives
\[
        0=b+\rho A0,
\]
so
\begin{equation}\label{eq:generic-similarity-translation-zero}
        b=0.
\end{equation}
All points of both unions lie on the same sphere $S_R(V)$.  Hence for any
point $z$ of the first union,
\[
        R=|T(z)|=\rho|Az|=\rho R,
\]
and therefore
\begin{equation}\label{eq:generic-similarity-scale-one}
        \rho=1.
\end{equation}
Thus every similarity between two generic members of our normalized family
is in fact an orthogonal transformation
\begin{equation}\label{eq:generic-similarity-is-orthogonal}
        T(z)=Az,
        \qquad A\in O(V).
\end{equation}

Because $A$ preserves distances, it preserves the graph
\qref{eq:block-recovery-graph}.  Lemma~\ref{lem:generic-block-recovery}
therefore implies that $A$ permutes the rotated blocks.  There exists a
permutation $\pi$ of $\{1,\ldots,s\}$, compatible with the weighted block
types, such that
\begin{equation}\label{eq:block-permutation-under-similarity}
        A Q_aX_a
        =Q'_{\pi(a)}X_{\pi(a)},
        \qquad a=1,\ldots,s.
\end{equation}
For every $a$ define
\begin{equation}\label{eq:Ba-definition}
        B_a
        :=(Q'_{\pi(a)})^{-1}AQ_a.
\end{equation}
Then
\[
        B_aX_a=X_{\pi(a)},
\]
so
\begin{equation}\label{eq:Ba-finite-isometry-choice}
        B_a\in\operatorname{Iso}(X_a,X_{\pi(a)}).
\end{equation}
Each set on the right is finite.  Equation \qref{eq:Ba-definition} may be
rewritten as
\begin{equation}\label{eq:Qprime-from-Q-finite-data}
        Q'_{\pi(a)}
        =AQ_aB_a^{-1}.
\end{equation}
Consequently, after the common left multiplication by $A$ is removed, the
only remaining identifications of a generic rotation tuple are generated by
finitely many permutations $\pi$ and finitely many choices of the matrices
$B_a$.  This proves the finite-to-one assertion in
Theorem~\ref{thm:generic-rotated-design-moduli}.
It remains to count the continuous parameters.  Before quotienting, the
orientation space is
\begin{equation}\label{eq:orientation-parameter-space}
        SO(V)^s.
\end{equation}
A common rotation acts by left multiplication,
\[
        A\cdot(Q_1,\ldots,Q_s)
        =(AQ_1,\ldots,AQ_s),
        \qquad A\in SO(V).
\]
This action is free.

Before taking the quotient, let us note that the generic set
$\mathcal U$ constructed in
Lemma~\ref{lem:generic-block-recovery} is invariant under this common left
action.  Indeed, its defining conditions are the absence of collisions and
the requirement that no cross-block distance belong to the fixed internal
distance set $\mathcal D$.  Both conditions depend only on Euclidean
distances, and therefore
\[
        (Q_1,\ldots,Q_s)\in\mathcal U
        \quad\Longleftrightarrow\quad
        (AQ_1,\ldots,AQ_s)\in\mathcal U
\]
for every $A\in SO(V)$.

We may therefore remove the common rotation explicitly by choosing
$A=Q_1^{-1}$.  Every orbit has a unique representative of the form
\begin{equation}\label{eq:relative-orientation-normal-form}
        (I,Q_1^{-1}Q_2,\ldots,Q_1^{-1}Q_s).
\end{equation}
Thus the continuous relative-orientation space is naturally identified with
\begin{equation}\label{eq:relative-orientation-space}
        SO(V)^{s-1}.
\end{equation}
To make the genericity statement in relative-orientation coordinates
explicit, consider the diffeomorphism
\[
\Psi:SO(V)^s
\longrightarrow
SO(V)\times SO(V)^{s-1}
\]
defined by
\[
\Psi(Q_1,\ldots,Q_s)
=
\bigl(
Q_1,
Q_1^{-1}Q_2,\ldots,Q_1^{-1}Q_s
\bigr).
\]
Its inverse is
\[
\Psi^{-1}(A,G_2,\ldots,G_s)
=
\bigl(
A,AG_2,\ldots,AG_s
\bigr).
\]

Because $\mathcal U$ is invariant under common left multiplication, membership
in $\mathcal U$ depends only on the relative rotations
\[
        (Q_1^{-1}Q_2,\ldots,Q_1^{-1}Q_s).
\]
Hence there exists a subset
\[
        \mathcal U_{\mathrm{rel}}
        \subset SO(V)^{s-1}
\]
such that
\[
        \Psi(\mathcal U)
        =
        SO(V)\times\mathcal U_{\mathrm{rel}}.
\]

Since $\Psi$ is a diffeomorphism and $\mathcal U$ is open and dense in
$SO(V)^s$, the product
\[
        SO(V)\times\mathcal U_{\mathrm{rel}}
\]
is open and dense in
\[
        SO(V)\times SO(V)^{s-1}.
\]
It follows that
\[
        \mathcal U_{\mathrm{rel}}
\]
is open and dense in $SO(V)^{s-1}$.  Thus the generic set descends exactly
to an open dense set of relative orientations.

Since
\begin{equation}\label{eq:dimension-SOd}
        \dim SO(d)=\frac{d(d-1)}2,
\end{equation}
the relative-orientation space has dimension
\begin{equation}\label{eq:moduli-dimension-section4}
        (s-1)\frac{d(d-1)}2.
\end{equation}

It remains only to check that the finite identifications found above do not
lower this dimension.  The possible use of a common orientation-reversing
element of $O(V)$ gives only a discrete identification.  The internal block
symmetries \qref{eq:block-symmetry-group}, the inter-block identifications
\qref{eq:block-isometry-set}, and the admissible permutations are all finite
as well.  Consequently every generic similarity class is represented by
only finitely many points of the relative-orientation space.

We use here the standard elementary fact that quotienting a
finite-dimensional manifold locally by finitely many identifications does
not change its local dimension.  Concretely, around a generic relative
orientation one may choose a sufficiently small coordinate neighborhood so
that the only identifications meeting that neighborhood come from the
finite stabilizer of the chosen point.  The local quotient is therefore a
finite quotient of an open subset of
$\mathbb R^{(s-1)d(d-1)/2}$, and has the same local dimension.

Hence the generic local dimension of the similarity moduli space is exactly
\qref{eq:moduli-dimension-section4}.

This proves all three parts of
Theorem~\ref{thm:generic-rotated-design-moduli} and completes its proof.

For two blocks, one may fix the first block once and for all and write the
family as
\[
        X_1\cup QX_2,
        \qquad Q\in SO(d).
\]
The theorem then says that, for generic collision-free $Q$, the passage from
$Q$ to the similarity class of the union is finite-to-one.  Hence the family
has the full generic local dimension
\begin{equation}\label{eq:two-block-dimension-section4}
        \frac{d(d-1)}2.
\end{equation}
This is the precise statement that was not supplied by the rotation parameter
alone.  In particular, two design blocks give one genuine modulus in
$d=2$, three genuine moduli in $d=3$, six in $d=4$, and so forth.

The remainder of Section~\ref{sec:rotated-design-moduli} will specialize this
general theorem to the polygonal and Platonic families announced in the
Introduction and will then prove the explicit non-similarity and density
statements.

\subsection{Polygonal, Platonic, and higher-dimensional consequences}
\label{subsec:polygonal-platonic-higher-dimensional}

We now apply Theorem~\ref{thm:generic-rotated-design-moduli} to the concrete
families announced in the Introduction.  Their centrality has already been
proved in Section~\ref{sec:spherical-design-mechanism}: by
Corollary~\ref{cor:relative-rotations-designs}, every collision-free union of
independently rotated spherical $k$-design blocks on one common sphere is a
central configuration for the exponent $\sigma=2k$.  What remained to be
proved in Corollaries~\ref{cor:polygonal-platonic-relative-rotation-families}
and~\ref{cor:multiparameter-design-block-families} was that the relative
orientation parameters survive, generically, after quotienting by
similarities.

For regular polygonal blocks the intrinsic configuration dimension is
$d=2$.  Since
\[
        \dim SO(2)=1,
\]
Theorem~\ref{thm:generic-rotated-design-moduli} gives, for $s$ independently
rotated polygonal blocks, the generic local dimension
\begin{equation}\label{eq:polygonal-moduli-dimension-section4}
        (s-1)\dim SO(2)=s-1.
\end{equation}
In particular, two polygonal blocks have one genuine local modulus.  This
proves the dimension assertion in part~(i) of
Corollary~\ref{cor:polygonal-platonic-relative-rotation-families} and the
formula \qref{eq:polygon-many-block-dimension} in part~(i) of
Corollary~\ref{cor:multiparameter-design-block-families}.

For Platonic blocks the intrinsic dimension is $d=3$.  Since
\[
        \dim SO(3)=3,
\]
the same theorem gives
\begin{equation}\label{eq:platonic-moduli-dimension-section4}
        (s-1)\dim SO(3)=3(s-1).
\end{equation}
Thus two Platonic blocks have three genuine local moduli, and $s$ blocks
have $3(s-1)$.  This proves the dimension assertion in part~(ii) of
Corollary~\ref{cor:polygonal-platonic-relative-rotation-families} and the
formula \qref{eq:platonic-many-block-dimension} in part~(ii) of
Corollary~\ref{cor:multiparameter-design-block-families}.

In these statements the block masses are held fixed while the orientations
vary.  As explained in
Subsection~\ref{subsec:design-atoms-weight-freedom}, the common vertex mass
of one block need not equal the common vertex mass of another block.  Thus
one may first choose arbitrary positive block masses $\mu_a$, fix them, and
then vary the relative orientations.  The dimensions
\qref{eq:polygonal-moduli-dimension-section4} and
\qref{eq:platonic-moduli-dimension-section4} count only the resulting
geometric orientation moduli; they do not include the independent blockwise
mass-rescaling parameters.

The same argument is not confined to dimensions two and three.  We record
two elementary higher-dimensional families and, between them, a particularly
symmetric three-dimensional example in which two different Platonic solids
produce a three-parameter family of $32$-body central configurations.

\medskip
\noindent
\textbf{Quartic families from regular simplices.}
Let $\Delta_d\subset S_R(V)$ be the vertex set of a regular $d$-simplex,
where $\dim V=d$. With equal weights its $d+1$ vertices satisfy
\[
        \sum_{x\in\Delta_d}x=0,
        \qquad
        \frac1{d+1}\sum_{x\in\Delta_d}xx^T
        =\frac{R^2}{d}I_V.
\]
By the converse established in
Theorem~\ref{thm:minimal-positive-weighted-2-design}, these identities are
exactly the equality case of the minimal positive weighted $2$-design
theorem.  Thus the equal-weight vertex set $\Delta_d$ is a spherical
$2$-design.

Therefore, for every pair $Q_1,Q_2\in SO(V)$, every collision-free union
\begin{equation}\label{eq:two-simplex-quartic-family}
        Q_1\Delta_d\cup Q_2\Delta_d
\end{equation}
is a central configuration for $\sigma=4$.  For generic relative
orientation its similarity moduli have local dimension
\begin{equation}\label{eq:two-simplex-quartic-moduli-dimension}
        \dim SO(d)=\frac{d(d-1)}2.
\end{equation}
Thus this construction gives $2(d+1)$ particles and a genuinely
higher-dimensional family for every $d\ge2$.

If the sphere has radius $R=1$ and every vertex has mass $1$, then the total
mass of \qref{eq:two-simplex-quartic-family} is $M=2(d+1)$.  Substituting
$k=2$ into \qref{eq:general-even-lambda} gives
\begin{equation}\label{eq:two-simplex-quartic-lambda}
        \lambda
        =-\frac{4(d+1)^2}{d}.
\end{equation}
For $d=3$ this is the family of two independently rotated regular
tetrahedra.  It has eight particles, three generic local moduli, and
\begin{equation}\label{eq:two-tetrahedra-quartic-lambda}
        \lambda=-\frac{64}{3}.
\end{equation}

\medskip
\noindent
\textbf{An icosahedron--dodecahedron family.}
We next give a more substantial three-dimensional example.  Let
\[
        X_{\mathrm{ico}}\subset S_R(V)
\]
be the $12$ vertices of a regular icosahedron, and let
\[
        X_{\mathrm{dod}}\subset S_R(V)
\]
be the $20$ vertices of a regular dodecahedron, both inscribed in the same
sphere in a three-dimensional Euclidean space $V$.

The equal-weight vertex sets of the icosahedron and the dodecahedron are
both spherical $5$-designs.  Consequently, for every
\begin{equation}\label{eq:icosa-dodeca-relative-rotation}
        Q\in SO(3),
\end{equation}
and for arbitrary positive numbers
\[
        \mu_{\mathrm{ico}}>0,
        \qquad
        \mu_{\mathrm{dod}}>0,
\]
we may give every vertex of $X_{\mathrm{ico}}$ the mass
$\mu_{\mathrm{ico}}$ and every vertex of $QX_{\mathrm{dod}}$ the mass
$\mu_{\mathrm{dod}}$.  By
Lemma~\ref{lem:weighted-design-closure}, every collision-free union
\begin{equation}\label{eq:icosa-dodeca-family}
        X_Q
        :=
        X_{\mathrm{ico}}\cup QX_{\mathrm{dod}}
\end{equation}
is again a weighted spherical $5$-design.

It follows from Theorem~\ref{thm:even-power-design} that the same
$32$-particle configuration \qref{eq:icosa-dodeca-family} is central
simultaneously for each of the four positive even exponents
\begin{equation}\label{eq:icosa-dodeca-four-exponents}
        \sigma=4,\quad6,\quad8,\quad10.
\end{equation}
More explicitly, for
\[
        \sigma=2k,
        \qquad
        k=2,3,4,5,
\]
its total mass is
\begin{equation}\label{eq:icosa-dodeca-total-mass}
        M
        =
        12\mu_{\mathrm{ico}}
        +20\mu_{\mathrm{dod}},
\end{equation}
and, since $d=3$, formula
\qref{eq:general-even-lambda} reduces to
\begin{equation}\label{eq:lambda-d3-simplified}
        \lambda_{3,k}
        =
        -\frac{2^{2k-1}}{k+1}\,
        M R^{2k-2}.
\end{equation}
Thus the central multiplier of \qref{eq:icosa-dodeca-family} is
\begin{equation}\label{eq:icosa-dodeca-lambda}
\boxed{
        \lambda
        =
        -\frac{2^{2k-1}}{k+1}
        \bigl(
        12\mu_{\mathrm{ico}}
        +20\mu_{\mathrm{dod}}
        \bigr)R^{2k-2},
        \qquad
        k=2,3,4,5.}
\end{equation}

For unit radius and unit masses at all $32$ vertices, one has $M=32$, and
the four multipliers are
\begin{equation}\label{eq:icosa-dodeca-unit-lambdas}
\begin{array}{c|c}
        \sigma & \lambda\\
        \hline
        4  & -\dfrac{256}{3}\\[2mm]
        6  & -256\\[2mm]
        8  & -\dfrac{4096}{5}\\[2mm]
        10 & -\dfrac{8192}{3}.
\end{array}
\end{equation}

The relative orientation in
\qref{eq:icosa-dodeca-family} is arbitrary subject only to avoiding the
exceptional subsets defined by the finitely many possible vertex-collision
conditions. Moreover, the two blocks have
different cardinalities and hence cannot be interchanged by an isometry of
the weighted union.  Theorem~\ref{thm:generic-rotated-design-moduli}
therefore shows that, for generic $Q\in SO(3)$, the map
\[
        Q
        \longmapsto
        [X_{\mathrm{ico}}\cup QX_{\mathrm{dod}}]
\]
from relative orientations to similarity classes is locally finite-to-one.
Consequently
\begin{equation}\label{eq:icosa-dodeca-moduli-dimension}
        \dim_{\mathrm{loc}}\mathcal M=3.
\end{equation}

Thus a single pair of different Platonic design blocks gives a
three-dimensional family of $32$-body central configurations, and every
member of this family is simultaneously a central configuration for four
different homogeneous potentials.

\medskip
\noindent
\textbf{Sextic families from cross-polytopes.}
Choose an orthonormal basis $e_1,\ldots,e_d$ of $V$ and put
\begin{equation}\label{eq:cross-polytope-definition-section4}
        C_d:=\{\pm Re_1,\ldots,\pm Re_d\}\subset S_R(V).
\end{equation}

The set $C_d$ is antipodal.  Hence every odd polynomial has zero average on
$C_d$, by pairwise cancellation of the contributions of $x$ and $-x$.
In particular, all spherical harmonics of degrees $1$ and $3$ have zero
average on $C_d$, exactly as they do on the sphere.

Its quadratic moment is isotropic:
\[
        \frac1{2d}\sum_{x\in C_d}xx^T
        =\frac{R^2}{d}I_V.
\]
To see what this implies for degree $2$, let
\[
        H(y)=y^TAy
\]
be a homogeneous harmonic quadratic polynomial.  Harmonicity is equivalent
to
\[
        \operatorname{tr}A=0.
\]
Therefore
\[
\begin{aligned}
        \frac1{2d}\sum_{x\in C_d}H(x)
        &=
        \operatorname{tr}\left(
        A\,\frac1{2d}\sum_{x\in C_d}xx^T
        \right)\\
        &=
        \frac{R^2}{d}\operatorname{tr}A\\
        &=0.
\end{aligned}
\]
Thus all spherical harmonic moments of positive degrees $1,2,3$ vanish.
By the harmonic-moment characterization
\qref{eq:harmonic-moment-characterization}, the equal-weight set $C_d$ is
therefore a spherical $3$-design. It follows that every
collision-free union
\begin{equation}\label{eq:two-cross-polytope-sextic-family}
        Q_1C_d\cup Q_2C_d,
        \qquad Q_1,Q_2\in SO(V),
\end{equation}
is a central configuration for $\sigma=6$, and for generic relative
orientation its similarity moduli again have local dimension
\begin{equation}\label{eq:two-cross-polytope-sextic-moduli-dimension}
        \frac{d(d-1)}2.
\end{equation}

For $R=1$ and unit vertex masses the total mass is $M=4d$.  Substituting
$k=3$ into \qref{eq:general-even-lambda} gives
\begin{equation}\label{eq:two-cross-polytope-sextic-lambda}
        \lambda=-16(d+3).
\end{equation}
When $d=3$, the cross-polytope is the regular octahedron.  Hence two
independently rotated octahedra give twelve particles, three generic local
moduli, and
\begin{equation}\label{eq:two-octahedra-sextic-lambda}
        \lambda=-96.
\end{equation}

These examples also emphasize that the two integers appearing in the theory
play independent roles.  The design strength $k$ determines the positive
even exponent $\sigma=2k$, whereas the intrinsic dimension $d$ determines
the size of the rotation group and therefore the number of relative
orientation moduli.  In particular, neither the quartic simplex family nor
the sextic cross-polytope family is intrinsically three-dimensional, while
the icosahedron--dodecahedron example illustrates how a high-strength design
may produce the same continuous family of central configurations for several
different even exponents.

This completes the proofs of
Corollaries~\ref{cor:polygonal-platonic-relative-rotation-families}
and~\ref{cor:multiparameter-design-block-families}, including their stated
generic local dimensions.  We next turn to a different consequence of the
same rotation construction: finite central configurations which are
arbitrarily dense on the underlying sphere.

\subsection{Arbitrarily dense finite central configurations}
\label{subsec:dense-central-configurations}

We now prove Corollary~\ref{cor:dense-central-configurations}.  Recall that a
finite subset $Z\subset S_R(V)$ is called an $\varepsilon$-net of the sphere
if every point of $S_R(V)$ lies at Euclidean distance less than
$\varepsilon$ from at least one point of $Z$.  Thus the assertion is that,
for every prescribed resolution $\varepsilon>0$, one can find a finite
central configuration which is distributed over the whole sphere with gaps
smaller than $\varepsilon$.

\begin{proof}[Proof of Corollary~\ref{cor:dense-central-configurations}]
Fix
\[
        d\ge2,\qquad k\ge2,
\]
and let
\[
        X\subset S_R(V)
\]
be the finite spherical $k$-design appearing in the statement of the
corollary.  Choose one point
\[
        x_0\in X.
\]

Let $\varepsilon>0$ be given.  Since the sphere $S_R(V)$ is compact, there
exists a finite $\varepsilon/3$-net
\begin{equation}\label{eq:density-epsilon-net}
        \{y_1,\ldots,y_s\}\subset S_R(V).
\end{equation}
If necessary, enlarge this finite net by adding one further point of
$S_R(V)$.  We may therefore assume
\[
        s\ge2,
\]
so that Lemma~\ref{lem:generic-block-recovery} applies to the $s$ rotated
copies used below. 
Thus, for every $z\in S_R(V)$, there is an index $a$ such that
\begin{equation}\label{eq:density-net-approximation}
        |z-y_a|<\frac{\varepsilon}{3}.
\end{equation}

The group $SO(V)$ acts transitively on the sphere because $d\ge2$.
Consequently, for each $a=1,\ldots,s$, we may choose a rotation
\[
        Q_a^0\in SO(V)
\]
such that
\begin{equation}\label{eq:density-initial-rotations}
        Q_a^0x_0=y_a.
\end{equation}
The union
\[
        Q_1^0X\cup\cdots\cup Q_s^0X
\]
therefore contains every point $y_a$ of
\qref{eq:density-epsilon-net}, and hence is itself an
$\varepsilon/3$-net of the sphere.

This initial union need not be collision-free: a point belonging to one
rotated copy of $X$ may happen to coincide with a point belonging to another
copy.  We now remove these possible coincidences by an arbitrarily small
perturbation of the rotations.

Apply Lemma~\ref{lem:generic-block-recovery} to the $s$ identical design
blocks
\[
        X_1=\cdots=X_s=X.
\]
It gives an open dense subset
\[
        \mathcal U\subset SO(V)^s
\]
such that every rotation tuple
\[
        (Q_1,\ldots,Q_s)\in\mathcal U
\]
produces a collision-free union
\[
        Q_1X\cup\cdots\cup Q_sX.
\]
In particular, because $\mathcal U$ is dense, we may choose such a tuple
arbitrarily close to
\[
        (Q_1^0,\ldots,Q_s^0).
\]
Choose it sufficiently close that
\begin{equation}\label{eq:density-small-perturbation}
        |Q_ax_0-Q_a^0x_0|
        <
        \frac{\varepsilon}{3},
        \qquad
        a=1,\ldots,s.
\end{equation}

We claim that the perturbed union is still an $\varepsilon$-net.  Indeed,
let $z\in S_R(V)$ be arbitrary.  Choose $a$ so that
\qref{eq:density-net-approximation} holds.  Using
\qref{eq:density-initial-rotations} and
\qref{eq:density-small-perturbation}, the triangle inequality gives
\[
\begin{aligned}
        |z-Q_ax_0|
        &\le
        |z-y_a|+|y_a-Q_ax_0|\\
        &=
        |z-y_a|+|Q_a^0x_0-Q_ax_0|\\
        &<
        \frac{\varepsilon}{3}
        +\frac{\varepsilon}{3}
        <\varepsilon.
\end{aligned}
\]
Since $Q_ax_0$ belongs to the union
\[
        Q_1X\cup\cdots\cup Q_sX,
\]
every point of the sphere lies within distance $\varepsilon$ of this union.
Thus the union is a collision-free $\varepsilon$-net.

It remains only to verify centrality.  Every rotated copy $Q_aX$ is again a
spherical $k$-design, and by
Lemma~\ref{lem:weighted-design-closure} their finite union, with the weights
inherited from the copies of $X$, is again a spherical $k$-design on the same
sphere.  Theorem~\ref{thm:even-power-design} therefore implies that the
collision-free union
\[
        Q_1X\cup\cdots\cup Q_sX
\]
is a central configuration for the exponent
\[
        \sigma=2k.
\]
This proves Corollary~\ref{cor:dense-central-configurations}.
\end{proof}

The result is worth contrasting with an ordinary limiting statement.  We are
not passing to an infinite distribution of matter on the sphere: for every
fixed $\varepsilon>0$ the configuration remains finite and consists of
positive point masses.  Nevertheless, by taking sufficiently many rotated
copies of one fixed spherical $k$-design, these finite central
configurations can approximate the whole sphere with arbitrarily fine
spatial resolution while the exponent $\sigma=2k$ is kept fixed.

The argument also shows that no special symmetry of the final
$\varepsilon$-dense configuration is required.  The rotations may be chosen
in the open dense generic set of
Lemma~\ref{lem:generic-block-recovery}; the density property survives because
the required perturbation can be made arbitrarily small.

We now return to the planar case.  There the construction may be made
completely explicit, and a simple shape invariant will allow us to prove
directly that the one-parameter polygonal families announced in
Theorem~\ref{thm:planar-polygonal-families} contain a continuum of pairwise
non-similar central configurations.

\subsection{Explicit planar family and pairwise non-similarity}
\label{subsec:explicit-planar-family}

We finish the present section by completing the proof of
Theorem~\ref{thm:planar-polygonal-families}.  The centrality of the family
already follows from the spherical-design construction.  What remains is to
record the multiplier and the elementary geometry of the configuration, and
then to prove explicitly that the parameter $\phi$ produces genuinely
different similarity classes.

\begin{proof}[Proof of Theorem~\ref{thm:planar-polygonal-families}]
Put
\[
        n:=k+1.
\]
Then the family \qref{eq:general-two-polygons} may be written as
\begin{equation}\label{eq:planar-family-n-notation}
        X_\phi
        =
        P_n\cup e^{i\phi}P_n,
\end{equation}
where
\[
        P_n
        =
        \left\{
        e^{2\pi i j/n}:j=0,\ldots,n-1
        \right\}.
\]
The regular $n$-gon is a spherical $(n-1)$-design, hence a spherical
$k$-design because $n-1=k$.  By
Lemma~\ref{lem:weighted-design-closure}, the union
\qref{eq:planar-family-n-notation} is therefore a spherical $k$-design for
every relative angle $\phi$, provided that the two blocks do not collide.
Theorem~\ref{thm:even-power-design} then implies that every collision-free
member of the family is a central configuration for
\[
        \sigma=2k.
\]

The two regular $n$-gons have a common vertex precisely when
\[
        e^{i\phi}
\]
is an $n$th root of unity.  Thus the union is collision-free exactly when
\begin{equation}\label{eq:planar-family-collision-n}
        e^{in\phi}\ne1,
\end{equation}
which is equivalent to
\qref{eq:general-polygon-collision-condition}.

All $2n$ masses are equal to one and the common circle has radius $R=1$.
Hence the total mass is
\[
        M=2n=2(k+1).
\]
Substituting this into the planar multiplier formula
\qref{eq:planar-general-even-lambda} gives
\[
\begin{aligned}
        \lambda
        &=
        -\frac12
        \binom{2k}{k}\,2(k+1)\\
        &=
        -(k+1)\binom{2k}{k}\\
        &=
        -n\binom{2n-2}{n-1},
\end{aligned}
\]
which proves \qref{eq:general-polygon-lambda}.

The collision-free configurations are strictly convex.  Indeed, their
$2n$ points are distinct and lie on the same circle.  The tangent line to
the circle at any one of these points is a supporting line which meets the
circle only at that point.  Hence every point is a vertex of the convex
hull.

It remains to prove that the relative angle gives a genuine modulus modulo
similarities.  For this purpose it is useful first to introduce a
similarity-invariant version of the quantity appearing in
\qref{eq:general-polygon-shape-invariant}.

Let
\[
        Z=\{z_1,\ldots,z_N\}\subset\mathbb C
\]
be a finite equal-mass configuration, and let
\[
        c(Z):=\frac1N\sum_{j=1}^N z_j
\]
be its center of mass.  For a positive integer $n$ define
\begin{equation}\label{eq:normalized-planar-shape-invariant}
        \mathcal J_n(Z)
        :=
        \frac{
        \left|
        \displaystyle\sum_{j=1}^N
        \bigl(z_j-c(Z)\bigr)^n
        \right|}
        {
        \displaystyle\sum_{j=1}^N
        |z_j-c(Z)|^n
        }.
\end{equation}
This quantity is invariant under Euclidean similarities and under
permutations of the points.

Indeed, under an orientation-preserving similarity
\[
        z\longmapsto a+s e^{i\theta}z,
        \qquad s>0,
\]
the centered coordinates transform as
\[
        z_j-c(Z)
        \longmapsto
        s e^{i\theta}\bigl(z_j-c(Z)\bigr).
\]
The numerator of
\qref{eq:normalized-planar-shape-invariant} is therefore multiplied by
$s^n$, since the additional factor $e^{in\theta}$ disappears after taking
the absolute value, while the denominator is multiplied by the same factor
$s^n$.  Under an orientation-reversing similarity the centered coordinates are
complex conjugated up to a common unit phase, which again leaves the
absolute value of the numerator unchanged.  Thus $\mathcal J_n$ depends only on the
similarity class of the configuration.

For the family $X_\phi$ the center of mass is zero, because each regular
polygon is centered separately.  Moreover every point has modulus one.
Consequently
\begin{equation}\label{eq:J-equals-normalized-J}
        \mathcal J_n(X_\phi)
        =
        \frac1{2n}
        \left|
        \sum_{z\in X_\phi}z^n
        \right|,
\end{equation}
so that $\mathcal J_n$ is precisely the quantity denoted by $J_{k+1}$ in
\qref{eq:general-polygon-shape-invariant}.

For the first polygon every vertex satisfies $z^n=1$, and hence
\begin{equation}\label{eq:first-polygon-nth-moment}
        \sum_{z\in P_n}z^n=n.
\end{equation}
For the rotated polygon,
\[
        z\in e^{i\phi}P_n
        \quad\Longrightarrow\quad
        z^n=e^{in\phi},
\]
and therefore
\begin{equation}\label{eq:second-polygon-nth-moment}
        \sum_{z\in e^{i\phi}P_n}z^n
        =
        ne^{in\phi}.
\end{equation}
Combining
\qref{eq:J-equals-normalized-J}--\qref{eq:second-polygon-nth-moment}, we
obtain
\begin{align}
        J_{k+1}(X_\phi)
        &=
        \mathcal J_n(X_\phi)\notag\\
        &=
        \frac12\left|1+e^{in\phi}\right|\notag\\
        &=
        \left|\cos\frac{n\phi}{2}\right|\notag\\
        &=
        \left|
        \cos\frac{(k+1)\phi}{2}
        \right|.
        \label{eq:planar-family-invariant-computation}
\end{align}
This is exactly
\qref{eq:general-polygon-shape-invariant-value}.

On the interval
\[
        0<\phi<\frac{\pi}{n}
        =\frac{\pi}{k+1},
\]
the argument $n\phi/2$ lies in $(0,\pi/2)$, so the absolute value in
\qref{eq:planar-family-invariant-computation} may be omitted and
\[
        J_{k+1}(X_\phi)
        =
        \cos\frac{n\phi}{2}
\]
is strictly decreasing.  Thus two different values
\[
        \phi_1,\phi_2
        \in
        \left(0,\frac{\pi}{k+1}\right)
\]
give different values of the similarity invariant $\mathcal J_n$.
The corresponding configurations cannot therefore be similar.

Hence the family contains a continuum of pairwise non-similar, strictly
convex central configurations for one fixed positive mass vector and the
fixed exponent $\sigma=2k$.  This completes the proof of
Theorem~\ref{thm:planar-polygonal-families}.
\end{proof}

We have now completed all proofs assigned to
Section~\ref{sec:rotated-design-moduli}.  Theorem~
\ref{thm:generic-rotated-design-moduli} proves that independently rotated
design blocks give, generically,
\[
        (s-1)\frac{d(d-1)}2
\]
genuine local moduli in arbitrary intrinsic dimension $d$.  The concrete
polygonal and Platonic consequences were established in
Subsection~\ref{subsec:polygonal-platonic-higher-dimensional}, including the
higher-dimensional simplex and cross-polytope families and the
three-dimensional icosahedron--dodecahedron family.
Corollary~\ref{cor:dense-central-configurations} was proved in
Subsection~\ref{subsec:dense-central-configurations}, showing that finite
central configurations may be arbitrarily dense on the underlying sphere.
Finally, the proof above gives an explicit continuum of pairwise
non-similar planar configurations for every positive even exponent.

The remaining issue is of a different nature.  Up to this point the
relative orientations have varied while the admissible weights were held
fixed, although Section~\ref{sec:spherical-design-mechanism} described the
constrained weight freedom available on a fixed support.  In the next
section we prove, in the planar case, the stronger statement that the
continuous family persists for every mass vector in an open neighborhood of
the equal-mass vector, provided the positions of the points are allowed to
move simultaneously.

\section{Persistence under perturbations of the masses}
\label{sec:open-mass-persistence}

We now prove Theorem~\ref{thm:open-mass-persistence}.  The argument is
specific to the circle and is stronger than the fixed-support weight freedom
described in Section~\ref{sec:spherical-design-mechanism}.  Here the
individual masses may vary independently in an open neighborhood of the
equal-mass vector because the angular positions are allowed to move at the
same time.

\subsection{Fourier equations and the angular rank}
\label{subsec:circle-fourier-rank}

Write the unit circle as
\[
        S^1=\{e^{i\theta}:\theta\in\mathbb R\}.
\]
Let positive masses $m_1,\ldots,m_N$ be placed at
\[
        z_j=e^{i\theta_j}.
\]
On $S^1$ the weighted spherical $k$-design equations are equivalent to the
vanishing of the first $k$ nonconstant Fourier moments:
\begin{equation}\label{eq:circle-weighted-fourier}
\boxed{
        \sum_{j=1}^{N}m_j e^{i\ell\theta_j}=0,
        \qquad
        \ell=1,\ldots,k.}
\end{equation}
Indeed, on the unit circle every polynomial of degree at most $k$ restricts
to a linear combination of the Fourier modes
\[
        1,e^{\pm i\theta},\ldots,e^{\pm ik\theta}.
\]
The normalized spherical average of every nonconstant Fourier mode is zero:
\begin{equation}\label{eq:circle-fourier-spherical-average}
        \frac1{2\pi}\int_0^{2\pi}e^{i\ell\theta}\,d\theta=0,
        \qquad
        \ell\ne0.
\end{equation}
Its weighted discrete average is
\[
        \frac1M\sum_{j=1}^N m_j e^{i\ell\theta_j},
        \qquad
        M:=\sum_{j=1}^N m_j.
\]
Therefore the design identity for the positive Fourier modes
$\ell=1,\ldots,k$ is exactly
\qref{eq:circle-weighted-fourier}.  The equations for the negative modes
follow by complex conjugation:
\[
        \sum_{j=1}^N m_j e^{-i\ell\theta_j}
        =
        \overline{
        \sum_{j=1}^N m_j e^{i\ell\theta_j}
        }.
\]
Finally, the constant mode is reproduced automatically, since both its
weighted discrete average and its normalized spherical average are equal to
the same constant.

Thus \qref{eq:circle-weighted-fourier} is not merely necessary but also
sufficient for the weighted spherical $k$-design identities on $S^1$.

Put
\[
        n:=k+1,
\]
so that our reference configuration consists of two regular $n$-gons.
From this point on, for this $2n$-particle family, we relabel the particles
by
\[
        j=0,\ldots,2n-1.
\]
Thus we write
\[
        \theta=(\theta_0,\ldots,\theta_{2n-1}),
        \qquad
        m=(m_0,\ldots,m_{2n-1}),
\]
and define
\[
        M_\ell(\theta,m)
        :=
        \sum_{j=0}^{2n-1}m_j e^{i\ell\theta_j},
        \qquad
        \ell=1,\ldots,n-1.
\]
and let
\begin{equation}\label{eq:mass-persistence-moment-map}
        \mathcal F(\theta,m)
        :=
        \bigl(
        \Re M_1,\Im M_1,\ldots,
        \Re M_{n-1},\Im M_{n-1}
        \bigr)
        \in\mathbb R^{2n-2}.
\end{equation}
Thus
\[
        \mathcal F(\theta,m)=0
\]
is exactly the weighted spherical $k$-design condition.

Fix a collision-free member
\[
        X_{\phi_0}^{(k)}
        =
        P_n\cup e^{i\phi_0}P_n.
\]
We take the reference angles to be
\begin{equation}\label{eq:base-two-ngons-angles}
\begin{aligned}
        \theta_j^0
        &=
        \frac{2\pi j}{n},
        &&j=0,\ldots,n-1,\\
        \theta_{n+j}^0
        &=
        \phi_0+\frac{2\pi j}{n},
        &&j=0,\ldots,n-1,
\end{aligned}
\end{equation}
with all masses equal to one.  Collision-freeness is precisely
\begin{equation}\label{eq:mass-persistence-collision-condition}
        e^{in\phi_0}\ne1.
\end{equation}

The only calculation needed for the implicit-function argument is the
following rank statement.

\begin{lemma}[Angular rank at two regular polygons]
\label{lem:angular-jacobian-rank}
At the equal-mass configuration
\qref{eq:base-two-ngons-angles}, satisfying
\qref{eq:mass-persistence-collision-condition}, the derivative of
$\mathcal F$ with respect to the $2n$ angular variables has rank
\begin{equation}\label{eq:mass-persistence-rank}
        \operatorname{rank}D_\theta\mathcal F
        =
        2n-2
        =
        2k.
\end{equation}
Its kernel is two-dimensional and consists exactly of independent rigid
infinitesimal rotations of the two regular $n$-gons.
\end{lemma}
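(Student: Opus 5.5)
The plan is to compute the complex derivative directly and diagonalize it with the discrete Fourier transform on each block. Since $\partial_{\theta_j}M_\ell=i\ell m_je^{i\ell\theta_j}$, at the base point a tangent vector $\dot\theta=(\dot\theta_0,\ldots,\dot\theta_{2n-1})$ is sent to the complex numbers
\[
        D_\theta M_\ell\cdot\dot\theta
        =
        i\ell\Bigl(\sum_{j=0}^{n-1}\omega^{\ell j}\dot\theta_j
        +e^{i\ell\phi_0}\sum_{j=0}^{n-1}\omega^{\ell j}\dot\theta_{n+j}\Bigr),
        \qquad \omega:=e^{2\pi i/n},
\]
for $\ell=1,\ldots,n-1$. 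Write $a_\ell:=\sum_j\omega^{\ell j}\dot\theta_j$ and $b_\ell:=\sum_j\omega^{\ell j}\dot\theta_{n+j}$ for the discrete Fourier coefficients of the two real angle blocks. The real map $D_\theta\mathcal F$ vanishes on $\dot\theta$ exactly when $a_\ell+e^{i\ell\phi_0}b_\ell=0$ for every $\ell=1,\ldots,n-1$. The factor $i\ell\ne0$ is harmless here.

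The key step is to solve these kernel equations using the reality symmetry. Because $\dot\theta$ is real, $a_{n-\ell}=\overline{a_\ell}$ and $b_{n-\ell}=\overline{b_\ell}$. The equation with index $n-\ell$ therefore reads $\overline{a_\ell}+e^{i(n-\ell)\phi_0}\overline{b_\ell}=0$, and conjugating it gives $a_\ell+e^{-i(n-\ell)\phi_0}b_\ell=0$. Subtracting this from the equation with index $\ell$ yields
\[
        \bigl(e^{i\ell\phi_0}-e^{-i(n-\ell)\phi_0}\bigr)b_\ell=0,
        \qquad\text{i.e.}\qquad
        e^{-i(n-\ell)\phi_0}\bigl(e^{in\phi_0}-1\bigr)b_\ell=0.
\]
By \qref{eq:mass-persistence-collision-condition}, $b_\ell=0$, and then $a_\ell=0$, for all $\ell=1,\ldots,n-1$. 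This pairing of $\ell$ with $n-\ell$ is the step I expect to be the crux. It is the only point where collision-freeness enters, and care is needed when $n$ is even and $\ell=n/2$ pairs with itself. In that case the same computation still applies, since $a_{n/2}$ and $b_{n/2}$ are real and the equation $a+e^{i n\phi_0/2}b=0$ with real $a,b$ forces $b=0$ unless $e^{in\phi_0/2}$ is real, i.e. unless $e^{in\phi_0}=1$.

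It remains to identify the kernel and conclude. By Fourier inversion on $\mathbb Z/n$, vanishing of all nonconstant coefficients means each block of $\dot\theta$ is constant: $\dot\theta_j=\alpha$ for $j<n$ and $\dot\theta_{n+j}=\beta$. Conversely, such vectors are in the kernel, because $\sum_j\omega^{\ell j}=0$ for $1\le\ell\le n-1$. These are precisely independent rigid infinitesimal rotations of the two $n$-gons, so the kernel is two-dimensional. Rank–nullity on the $2n$-dimensional domain then gives rank $2n-2=2k$, which equals the dimension of the target $\mathbb R^{2n-2}$. Hence $D_\theta\mathcal F$ is surjective, as claimed.
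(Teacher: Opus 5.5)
Your proposal is correct and follows the paper's proof essentially step for step. Both arguments take the discrete Fourier transform of each block, pair the mode $\ell$ with $n-\ell$ through the reality relation to extract the factor $e^{in\phi_0}-1$, and finish with Fourier inversion and rank--nullity. Your explicit check that constant block variations do lie in the kernel, and your separate look at the self-paired mode $\ell=n/2$, are small additions that the paper leaves implicit.
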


\begin{proof}
Let
\[
        u_0,\ldots,u_{n-1},
        \qquad
        v_0,\ldots,v_{n-1}
\]
be infinitesimal angular variations of the first and second polygons,
respectively, and put
\[
        \omega:=e^{2\pi i/n}.
\]
Linearizing the complex equation $M_\ell=0$ gives, apart from the nonzero
factor $i\ell$,
\begin{equation}\label{eq:angular-linearization-fourier}
        \widehat u_\ell
        +
        e^{i\ell\phi_0}\widehat v_\ell
        =0,
        \qquad
        \ell=1,\ldots,n-1,
\end{equation}
where
\[
        \widehat u_\ell
        :=
        \sum_{j=0}^{n-1}u_j\omega^{\ell j},
        \qquad
        \widehat v_\ell
        :=
        \sum_{j=0}^{n-1}v_j\omega^{\ell j}.
\]

Because the variations $u_j,v_j$ are real,
\begin{equation}\label{eq:fourier-reality}
        \widehat u_{n-\ell}
        =
        \overline{\widehat u_\ell},
        \qquad
        \widehat v_{n-\ell}
        =
        \overline{\widehat v_\ell}.
\end{equation}
Taking the complex conjugate of
\qref{eq:angular-linearization-fourier} and comparing it with the equation
having index $n-\ell$ gives
\[
        e^{-i\ell\phi_0}
        \bigl(e^{in\phi_0}-1\bigr)
        \widehat v_{n-\ell}
        =0.
\]
By \qref{eq:mass-persistence-collision-condition},
\[
        \widehat v_{n-\ell}=0.
\]
Equation \qref{eq:angular-linearization-fourier} then gives
\[
        \widehat u_\ell=0
\]
for every nonzero Fourier mode $\ell=1,\ldots,n-1$.

We now use the invertibility of the discrete Fourier transform.  With our
convention,
\[
        \widehat u_\ell
        =
        \sum_{j=0}^{n-1}u_j\omega^{\ell j},
        \qquad
        \ell=0,\ldots,n-1,
\]
the inverse transform is
\begin{equation}\label{eq:inverse-discrete-fourier-transform}
        u_j
        =
        \frac1n
        \sum_{\ell=0}^{n-1}
        \widehat u_\ell\,\omega^{-\ell j},
        \qquad
        j=0,\ldots,n-1,
\end{equation}
and similarly for $v$.

Since all nonzero Fourier modes of $u$ and $v$ vanish, only the terms
$\ell=0$ remain in the inverse transform.  Therefore
\[
        u_j=\frac{\widehat u_0}{n},
        \qquad
        v_j=\frac{\widehat v_0}{n}
        \qquad
        \text{for every }j,
\]
and hence
\[
        u_0=\cdots=u_{n-1},
        \qquad
        v_0=\cdots=v_{n-1}.
\]
These are precisely the independent infinitesimal rigid rotations of the two
polygons.  Thus
\[
        \dim\ker D_\theta\mathcal F=2.
\]
Since the angular space has dimension $2n$, rank-nullity gives
\[
        \operatorname{rank}D_\theta\mathcal F
        =
        2n-2,
\]
as claimed.
\end{proof}

\subsection{Implicit-function theorem and completion of the proof}
\label{subsec:mass-ift-completion}

We now complete the proof of
Theorem~\ref{thm:open-mass-persistence}.

\begin{proof}[Proof of Theorem~\ref{thm:open-mass-persistence}]
At the reference configuration,
Lemma~\ref{lem:angular-jacobian-rank} shows that
$D_\theta\mathcal F$ is surjective onto
$\mathbb R^{2n-2}$.  Hence one may select $2n-2$ of the angular variables
for which the corresponding square Jacobian minor is invertible.

The remaining two angular variables and all $2n$ masses may therefore be
used as parameters in the parameterized implicit-function theorem.  It
follows that, for every mass vector sufficiently close to
\[
        (1,\ldots,1)\in\mathbb R_{>0}^{2n},
\]
the solutions of
\[
        \mathcal F(\theta,m)=0
\]
near the reference configuration form a smooth two-dimensional local family
in the angular variables.

One of these two dimensions is present for an elementary reason: simultaneous
rotation of all points,
\[
        \theta_j\longmapsto\theta_j+\tau,
\]
preserves every equation
\qref{eq:circle-weighted-fourier}.  We now remove this trivial direction
explicitly rather than merely counting it.

Let $\mathcal S_m$ denote the two-dimensional local solution manifold in the
angular variables for one fixed nearby mass vector $m$.  On $\mathcal S_m$
consider the function
\[
        g(\theta):=\theta_0.
\]
Along the common-rotation orbit one has
\[
        g(\theta+\tau(1,\ldots,1))
        =
        g(\theta)+\tau.
\]
Hence the derivative of $g$ in the common-rotation direction is equal to
$1$.  In particular, $g$ is not locally constant on $\mathcal S_m$, and by
the ordinary implicit-function theorem it may be used as one of the two
local coordinates on $\mathcal S_m$.

We may therefore impose the gauge
\begin{equation}\label{eq:mass-persistence-rotation-gauge}
        \theta_0=\theta_0^0.
\end{equation}
Every sufficiently nearby common-rotation orbit meets this gauge exactly
once: given $\theta$, the unique small rotation which brings it to the gauge
is
\[
        \tau=\theta_0^0-\theta_0.
\]
Fixing the coordinate $g=\theta_0^0$ removes exactly one of the two local
parameters.  Thus, after quotienting by common rotation, the nearby
solutions form a smooth one-dimensional local family.

After shrinking the neighborhoods if necessary, all masses remain positive
and all angular positions remain distinct.  Their cyclic order on the
circle is unchanged.  Hence every resulting configuration consists of
distinct points on one circle and is strictly convex.

It remains only to check that the surviving parameter is not removed by the
full similarity quotient.  Let two nearby members of the family, for the
same fixed mass vector, be related by a similarity
\[
        T(z)=a+sAz,
        \qquad
        s>0,
        \qquad
        A\in O(2),
\]
possibly followed by a permutation of particles carrying equal masses.

Every weighted $k$-design with $k\ge1$ has weighted center of mass equal to
the origin.  Since a similarity carries the weighted center of mass to its
image under the same affine map, the two centered configurations can be
similar only if
\[
        a=0.
\]
Moreover, every point of both configurations lies on the unit circle.
Thus
\[
        1=|T(z)|=s|Az|=s
\]
for every point $z$ of the first configuration, and hence
\[
        s=1.
\]
Therefore every remaining similarity identification is induced by an
element of $O(2)$.

The subgroup $SO(2)$ consists precisely of the common rotations which have
already been removed by the gauge
\qref{eq:mass-persistence-rotation-gauge}.  The quotient
\[
        O(2)/SO(2)
\]
has two elements: after common rotations have been factored out, the only
additional orthogonal possibility is one reflection class.  For the fixed
mass vector, the permutations which preserve the masses form a subgroup of
the finite symmetric group on the $2n$ particles, and are therefore finite
as well.

Consequently the remaining equivalence relation on the one-dimensional
gauged solution family has finite equivalence classes.  As in the finite
quotient argument used in the proof of
Theorem~\ref{thm:generic-rotated-design-moduli}, quotienting by these finite
identifications does not change the local dimension.  Hence the full
similarity quotient has local dimension one.  In particular, every
sufficiently small neighborhood contains infinitely many mutually
non-similar weighted configurations.

Finally, every solution of
\[
        \mathcal F(\theta,m)=0
\]
is a weighted spherical $k$-design by
\qref{eq:circle-weighted-fourier}.  Theorem~\ref{thm:even-power-design}
therefore makes every member of the family a central configuration for
\[
        \sigma=2k.
\]
This proves Theorem~\ref{thm:open-mass-persistence}.
\end{proof}

The result should be read together with the general conclusions of
Sections~\ref{sec:spherical-design-mechanism} and
\ref{sec:rotated-design-moduli}.  In arbitrary dimension $d\ge2$, fixed
weighted design blocks already produce generic local families of dimension
\[
        (s-1)\frac{d(d-1)}2
\]
through their independent relative orientations, while
Proposition~\ref{prop:fixed-support-weight-polytope} describes the additional
mass freedom compatible with a fixed support.  The planar theorem proved
above is stronger in a different direction: it shows that the individual
masses themselves may be varied through an open subset of the full positive
mass space, provided the support is allowed to deform simultaneously.

This completes the proofs of the main results of the paper.

\end{document}